\renewcommand{\baselinestretch}{1.3}
\definecolor{MyDarkBlue}{rgb}{0,0.08,0.45}
\newcommand*\fvec[1]{\ensuremath{\mathbf{#1}}}                                	
\newcommand*\dx[3]{\frac{\partial^{#3} {#1}}{\partial {#2}^{#3}}}			
\newcommand*\dt[0]{\frac{d}{d\,t}\,}					                      	
\newcommand{\abs}[1]{\left\lvert{#1}\right\rvert}                     			
\newcommand{\norm}[1]{\left\lVert{#1}\right\rVert}                     		
\newcommand*\mc[0]{\mathcal}                                                  		
\newcommand*\mbb[0]{\mathbb}                                                  		
\DeclareMathOperator*{\argmin}{argmin}                                        		
\DeclareMathOperator*{\sinc}{sinc} 	                                       		
\DeclareMathOperator*{\sinbf}{\mathrm{\bf sin}} 	                              
\DeclareMathOperator*{\diag}{\mathrm{diag}}						
\newtheorem{theorem}{Theorem}[section]
\newtheorem{lemma}[theorem]{Lemma}
{\theorembodyfont{\rmfamily} 
\newtheorem{remark}[theorem]{Remark}






\newcommand{\until}[1]{\{1,\dots, #1\}}
\newcommand{\subscr}[2]{#1_{\textup{#2}}}
\newcommand{\supscr}[2]{#1^{\textup{#2}}}
\newcommand{\setdef}[2]{\{#1 \; | \; #2\}}

\newcommand{\map}[3]{#1: #2 \rightarrow #3}

\newcommand\oprocendsymbol{\hbox{$\square$}}
\newcommand\oprocend{\relax\ifmmode\else\unskip\hfill\fi\oprocendsymbol}

\newcommand{\real}{\mathbb{R}}

\newcommand{\groundedphases}{\operatorname{grnd}}







\title{Synchronization and Transient Stability in Power Networks and
  Non-Uniform Kuramoto Oscillators\thanks{This work was supported in part
    by NSF grants IIS-0904501 and CNS-0834446. This document is a vastly
    revised and extended version of~\cite{FD-FB:09o}. The authors
    gratefully acknowledge Prof.\ Yoshihiko Susuki and Prof.\ Petar
    Kokotovi\'c for their insightful comments that improved the
    presentation of this article.  }}

\author{Florian D\"orfler,~\IEEEmembership{Member,~IEEE}, and Francesco
  Bullo,~\IEEEmembership{Fellow,~IEEE}
  \thanks{Florian D\"orfler and Francesco Bullo are with the Center for
    Control, Dynamical Systems and Computation, University of California at
    Santa Barbara, Santa Barbara, CA 93106, {\tt\small \{dorfler,
      bullo\}@engineering.ucsb.edu}} }



\begin{document}



\maketitle



\begin{abstract}
  Motivated by recent interest for multi-agent systems and smart power grid
  architectures, we discuss the synchronization problem for the
  network-reduced model of a power system with non-trivial transfer
  conductances.  Our key insight is to exploit the relationship between the
  power network model and a first-order model of coupled oscillators.
  Assuming overdamped generators (possibly due to local excitation
  controllers), a singular perturbation analysis shows the equivalence
  between the classic swing equations and a non-uniform Kuramoto
  model. Here, non-uniform Kuramoto oscillators are characterized by
  multiple time constants, non-homogeneous coupling, and non-uniform phase
  shifts.  Extending methods from transient stability, synchronization
  theory, and consensus protocols, we establish sufficient conditions for
  synchronization of non-uniform Kuramoto oscillators.  These conditions
  reduce to and improve upon previously-available tests for the standard
  Kuramoto model. Combining our singular perturbation and Kuramoto
  analyses, we derive concise and purely algebraic conditions that relate
  synchronization and transient stability of a power network to the
  underlying system parameters and initial conditions.
\end{abstract}



\section{Introduction}\label{Section: Introduction}
The vast North American interconnected power grid is often referred to as
the largest and most complex machine engineered by humankind. The 
various instabilities arising in such a large-scale power grid can be classified 
by their physical nature, the size of the uncertainty or disturbance causing 
the instability, or depending on the
devices, processes, and the time necessary to determine the instability.
All of these instabilities can lead and have led to blackouts of power
grids \cite{PP-PSK-CMT:06}, and their detection and rejection will be one of
the major challenges faced by the future ``smart power grid.''

The envisioned future power generation will rely increasingly on renewable
energy sources such as wind and solar power. Since these renewable power
sources are highly stochastic, there will be an increasing number of
transient disturbances acting on a power grid that is expected to be even
more complex and decentralized than the current one. Thus, an important
form of power network stability is the so-called {\it transient stability}
\cite{VV:03}, which is the ability of a power system to remain in
synchronism when subjected to large transient disturbances. These
disturbances may include faults on transmission elements or loss of load,
loss of generation, or loss of system components. For example, a recent
major blackout in Italy in 2003 was caused by tripping of a tie-line and
resulted in a cascade of events leading to the loss of synchronism of the
Italian power grid with the rest of Europe \cite{PP-PSK-CMT:06}. The
mechanism by which interconnected synchronous machines maintain synchronism
is a balance of their mechanical power inputs and their electrical power
outputs depending on the relative rotor angles among machines.  In a
classic setting the transient stability problem is posed as a special case
of the more general {\it synchronization\,\,problem}, which is defined over
a possibly longer time horizon, for rotor angles possibly drifting away
from their nominal values, and for generators subject to local excitation
controllers aiming to restore synchronism. In order to analyze the
stability of a synchronous operating point of a power grid and to estimate
its region of attraction, various sophisticated algorithms have been
developed \cite{HDC-FFW-PPV:88,TA-RP-SV:79,JB-CIB:82,
  HDC-CCC:95, NGB-LFCA:03,FS-LFCA-JBAL-NGB:05}.
Reviews and survey articles on transient stability analysis can be found in
\cite{PV-FFW-RLC:85,MAP:89,HDC-CCC-GC:95,LFCA-FS-NGB:01}.
Unfortunately, the existing methods can cope only with 
simplified models and do not provide simple formulas to check if a power
system synchronizes for a given system state and parameters. In fact, an
open problem, recognized by \cite{DJH-GC:06} and not resolved by classical
analysis methods, is the quest for explicit and concise conditions for
synchronization as a function of the topological, algebraic, and spectral
graph properties of the network.


The recent years have witnessed a burgeoning interest of the control
community in cooperative control of autonomous agent systems. Recent
surveys and monographs
include~\cite{ROS-JAF-RMM:07,WR-RWB-EMA:07,FB-JC-SM:09}. One of the basic
tasks in a multi-agent system is a consensus of the agents' states to a
common value. This consensus problem has been subject to fundamental
research \cite{LM:05,ZL-BF-MM:07} as well as to applications in robotic
coordination, distributed sensing and computation, and various other fields
including synchronization. In most articles treating consensus problems the
agents obey single integrator dynamics, but the synchronization of
interconnected power systems has often been envisioned as possible future
application \cite{DJH-JZ:08}. However, we are aware of only one
article~\cite{MA:07} that indeed applies consensus methods to a power
network model.

Another set of literature relevant to our investigation is the
synchronization of coupled oscillators \cite{AA-ADG-JK-YM-CZ:08}, in
particular in the classic model introduced by Kuramoto \cite{YK:03}. The
synchronization of coupled Kuramoto oscillators has been widely studied by
the physics \cite{FDS-DA:07,JLvH-WFW:93,KW-PC-SHS:98} and the dynamical
systems communities \cite{REM-SHS:05,EC-PM:08,DA-JAR:04}.  This vast
literature with numerous theoretical results and rich applications to
various scientific areas is elegantly reviewed in
\cite{SHS:00,JAA-LLB-CJPV-FR-RS:05}.  Recent works in the control community
\cite{LM:05,ZL-BF-MM:07,NC-MWS:08,AJ-NM-MB:04} investigate the close
relationship between Kuramoto oscillators and consensus networks.

The three areas of power network synchronization, Kuramoto oscillators, and
consensus protocols are apparently closely related.  Indeed, the similarity
between the Kuramoto model and the power network models used in transient
stability analysis is striking.  Even though power networks have often been
referred to as systems of coupled oscillators, the similarity to a
second-order Kuramoto-type model has been mentioned only very recently in
the power networks community in
\cite{GF-AHN-NFP:08,VF-SR-EC-EM-VR:09,DS-UR-BS-MP:01}, where only
qualitative simulation studies for simplified models are carried out. In
the coupled-oscillators literature, second-order Kuramoto models similar to
power network models have been analyzed in simulations and in the continuum
limit; see \cite{JAA-LLB-CJPV-FR-RS:05} and references therein. However, we
are aware of only two articles referring to power networks as possible
application \cite{AA-ADG-JK-YM-CZ:08,HAT-AJL-SO:97}.  In short, the evident
relationship between power network synchronization, Kuramoto oscillators,
and consensus protocols has been recognized, but the gap between the first
and the second two topics has not been bridged yet in a thorough analysis.
   
There are three main contributions in the present paper.  As a first
contribution, we present a coupled-oscillator approach to the problem of
synchronization and transient stability in power networks. Via a singular
perturbation analysis, we show that the transient stability analysis for
the classic swing equations with overdamped generators reduces, on a long
time-scale, to the problem of synchronizing non-uniform Kuramoto
oscillators with multiple time constants, non-homogeneous coupling, and
non-uniform phase-shifts. This reduction to a non-uniform Kuramoto model is
arguably the missing link connecting transient stability analysis to
networked control, a link that was hinted at in \cite{DJH-GC:06, DJH-JZ:08,
  GF-AHN-NFP:08, VF-SR-EC-EM-VR:09, DS-UR-BS-MP:01, AA-ADG-JK-YM-CZ:08,
  MA:07}.

Second, we give novel, simple, and purely algebraic conditions that are
sufficient for synchronization and transient stability of a power
network. To the best of our knowledge these conditions are the first ones
to relate synchronization and performance of a power network directly to
the underlying network parameters and initial state.
Our conditions are based on different and possibly less restrictive
assumptions than those obtained by classic analysis methods
\cite{HDC-FFW-PPV:88,TA-RP-SV:79,JB-CIB:82,HDC-CCC:95,NGB-LFCA:03,FS-LFCA-JBAL-NGB:05,PV-FFW-RLC:85}.  We consider a
network-reduced model of a power network, and do not make any of the
following common or classic assumptions: we do not require the swing
equations to be formulated in relative coordinates accompanied by a uniform
damping assumption, we do not require the existence of an infinite bus, and
we do not require the transfer conductances to be ``sufficiently small'' or
even negligible.  On the other hand, our results are based on the
assumption that each generator is strongly overdamped, possibly due to
internal excitation control. This assumption allows us to perform a
singular perturbation analysis and study a dimension-reduced system.  Due
to topological equivalence, our synchronization conditions hold locally
even if generators are not overdamped, and in the application to real power
networks the approximation via the dimension-reduced system is
theoretically well-studied and also applied in the power industry
\cite{H-DC:10}.  Our synchronization conditions are based on an analytic
approach whereas classic analysis methods
\cite{HDC-FFW-PPV:88,TA-RP-SV:79,HDC-CCC:95,NGB-LFCA:03,FS-LFCA-JBAL-NGB:05,PV-FFW-RLC:85} rely on numerical procedures
to approximate the region of attraction of an equilibrium by level sets of
energy functions and stable manifolds.  Compared to classic analysis
methods, our analysis does not aim at providing best estimates of the
region of attraction or the critical clearing time. Rather, we approach the
open problem~\cite{DJH-GC:06} of relating synchronization to the underlying
network structure. For this problem, we derive sufficient and purely
algebraic conditions that can be interpreted as ``the network connectivity
has to dominate the network's non-uniformity, the network's losses, and the
lack of phase cohesiveness.''


Third and final, we perform a synchronization analysis of non-uniform
Kuramoto oscillators, as an interesting mathematical problem in its own
right. Our analysis combines and extends methods from consensus protocols
and synchronization theory. As an outcome, purely algebraic conditions on
the network parameters and the system state establish the 
phase cohesiveness, frequency synchronization, and phase
synchronization of the non-uniform Kuramoto oscillators. We emphasize that
our results do not hold only for non-uniform network parameters but also in
the case when the underlying coupling topology is not a complete
graph. When our results are specialized to classic (uniform) Kuramoto
oscillators, they reduce to and even improve upon various well-known
conditions in the literature on the Kuramoto model
\cite{JLvH-WFW:93,FDS-DA:07,ZL-BF-MM:07,NC-MWS:08,AJ-NM-MB:04,GSS-UM-FA:09}. In
the end, these conditions guaranteeing synchronization of non-uniform
Kuramoto oscillators also suffice for the transient stability of the power
network.

\subsubsection*{Paper Organization}
This article is organized as follows.  The remainder of this section
introduces some notation, recalls preliminaries on algebraic graph theory
and differential geometry, and reviews the consensus protocol and the Kuramoto model of
coupled oscillators.  Section \ref{Section: Classical Transient Stability
  Analysis} reviews the problem of transient stability analysis. Section
\ref{Section: The Non-Uniform Kuramoto Model and Main Synchronization
  Result} introduces the non-uniform Kuramoto model and presents the main
result of this article. Section \ref{Section: Singular Perturbation
  Analysis of Synchronization} translates the power network model to the
non-uniform Kuramoto model whose synchronization analysis is presented in
Section \ref{Section: Synchronization of Non-uniform Kuramoto
  System}. Section \ref{Section: Simulation Results} provides simulation
studies to illustrate the analytical results. Finally, some conclusions are
drawn in Section \ref{Section: Conclusions}. The appendix in Section \ref{Appendix} contains different synchronization conditions and estimates for the phase cohesiveness that can be derived alternatively to the ones presented in Section \ref{Section: Synchronization of Non-uniform Kuramoto System}.

\subsubsection*{Vector and matrix notation}

Given an $n$-tuple $(x_{1},\dots,x_{n})$, $\diag(x_{i}) \in \mbb R^{n
  \times n}$ is the associated diagonal matrix, $x \in \mbb R^{n}$ is the
associated column vector, $\subscr{x}{max}$ and $\subscr{x}{min}$ are the
maximum and minimum elements, and $\norm{x}_{2}$ and $\norm{x}_{\infty}$
are the $2$- and $\infty$-norm. Let $\fvec 1_{n}$ and $\fvec 0_{n}$ be the vectors
of $1$'s and $0$'s of dimension $n$. Given two non-zero vectors $x
\in \mbb R^{n}$ and $y \in \mbb R^{n}$, the angle $\angle(x,y) \in
[0,\pi/2]$ between them satisfies $\cos(\angle(x,y)) = x^{T}y /(||x|| \,
||y||)$. Given an array $\{A_{ij}\}$ with $i ,j \in \until n$, $A \in \mbb
R^{n \times n}$ is the associated matrix with $\subscr{A}{max} = \max_{i,j}
\{A_{ij} \}$ and $\subscr{A}{min} = \min_{i,j} \{A_{ij} \}$. Given a total
order relation among the indices $(i,j)$, let $\diag(A_{ij})$ denote the
corresponding diagonal matrix.

\subsubsection*{Graph theory}

A {\it weighted directed graph} is a triple $\mc G = (\mc V,\mc E,A)$,
where $\mc V = \until{n}$ is the set of nodes, $\mc E \subset \mc V \times
\mc V$ is the set of directed edges, and $A \in \mbb R^{n \times n}$ is the
{\it adjacency matrix}. The entries of $A$ satisfy $a_{ij} > 0$ for each
directed edge $(i,j) \in \mc E$ and are zero otherwise. Any nonnegative
matrix $A$ induces a weighted directed graph $\mc G$. The {\it Laplacian}
of $\mc G$ is the $n \times n$ matrix $L(a_{ij}) :=
\mathrm{diag}(\sum_{j=1}^na_{ij}) -A$.\,\,In the following, we assume that
$A=A^{T}$, that is, $\mc G$ is undirected. In this case, the graph $\mc G$
is fully described by the elements $a_{ij}$ with $i\geq{j}$. If a number $k
\in \until{|\mc E|}$ and a weight $w_{k} = a_{ij}$ is assigned to any of these
edges $(i,j)$ with $i>j$, then the \textit{incidence matrix} $H \in \mathbb
R^{|\mc E| \times n}$ is defined component-wise as $H_{kl} = 1$ if node
$l$ is the sink node of edge ${k}$ and as $H_{kl} = -1$ if node $l$ is the
source node of edge ${k}$; all other elements are zero. The Laplacian
equals then the symmetric matrix $L(a_{ij}) = H^{T} \diag(w_{k}) H$. If
$\mc G$ is connected, then $\mathrm{ker}(H) = \mathrm{ker}(L(a_{ij})) =
\mathrm{span}(\fvec 1_{n})$, all $n-1$ remaining non-zero eigenvalues of $L(\mc
G)$ are strictly positive, and the second-smallest eigenvalue
$\lambda_{2}(L(a_{ij}))$ is called the {\it algebraic connectivity} of $\mc
G$ and, for a complete and uniformly weighted graph ($a_{ij} \equiv 1$ for
all $i \neq j$), it satisfies $\lambda_{2}(L(a_{ij}))=n$.

\subsubsection*{Geometry on the $n$-torus}
\label{subsec:geom-torus}

The {\it torus} is the set $\mbb T^{1} = {]\!-\!\pi,+\pi]}$, where $-\pi$
and $+\pi$ are associated with each other, an {\it angle} is an element
$\theta \in \mbb T^{1}$, and an {\it arc} is a connected subset of $\mbb
T^{1}$. The product set $\mbb T^{n}$ is the $n$-dimensional torus.  With
slight abuse of notation, let $|\theta_{1}-\theta_{2}|$ denote the {\it
  geodesic distance} between two angles $\theta_{1} \in \mbb T^{1}$ and
$\theta_{2} \in \mbb T^{1}$.  For $\gamma\in{[0,\pi]}$, let $\Delta(\gamma)
\subset \mbb T^{n}$ be the set of angle arrays
$(\theta_1,\dots,\theta_n)\in\mbb T^{n}$ such that there exists an arc of
length $\gamma$ containing all $\theta_1,\dots,\theta_n$ in its
interior. Thus, an array of angles $\theta \in \Delta(\gamma)$ satisfies
$\max\nolimits_{i,j \in \until n} |\theta_{i} - \theta_{j}| < \gamma$.   
For $\gamma\in{[0,\pi]}$, we also define $\bar\Delta(\gamma)$ to
be the union of the set $\{\theta \in \mbb T^{n} \;|\;
\theta_{i} = \theta_{j} ,\, i,j \in \until n\}$ and the closure of the open
set $\Delta(\gamma)$.

For a rigorous definition of the {\it difference} between angles (i.e.,
points on the torus), we restrict our attention to angles contained in an
open half-circle: for angles $\theta_{1}$, $\theta_{2}$ with $|\theta_{1} -
\theta_{2}|<\pi$, the difference $\theta_{1}-\theta_{2}$ is the number in
${]\!-\!\pi,\pi[}$ with magnitude equal to the geodesic distance
$|\theta_{1} - \theta_{2}|$ and with positive sign iff the
counter-clockwise path length connecting $\theta_{1}$ and $\theta_{2}$ on
$\mbb T^{1}$ is smaller than the clockwise path length.  Finally, we define
the multivariable sine $\map{\sinbf}{\mbb T^{n}}{[0,1]^n}$ by $\sinbf(x) =
(\sin(x_{1}),\dots,\sin(x_{n}))$ and the sinc function
$\map{\sinc}{\real}{\real}$ by $\sinc(x)=\sin(x)/x$.

\subsubsection*{Review of the Consensus Protocol and the Kuramoto
  Model}\label{Subsection: The Consensus Protocol and the Kuramoto Model}

In a system of $n$ {\it autonomous agents}, each characterized by a state
variable $x_{i}\in \mbb R$, one of the most basic tasks is to achieve a
consensus on a common state value, that is, all agent states $x_{i}(t)$ converge to a common value $x_{\infty} \in \mbb R$ as $t
\to \infty$. Given a graph $\mc G$ with
adjacency matrix $A$ describing the interaction between agents,
a simple, linear, and continuous time algorithm to achieve consensus on the
agents' state is the {\it consensus protocol}
\begin{equation}
	\dot x_{i}
	=
	- \sum\nolimits_{j=1}^{n} a_{ij} (x_{i}-x_{j})
	, \quad
	i \in \until n
	\label{eq: Consensus protocol}
	\,.
\end{equation}
In vector notation the consensus protocol \eqref{eq: Consensus protocol}
takes the form $\dot{x} = - L(a_{ij}) x$, which directly reveals the
dependence of the consensus protocol to the underlying graph $\mc G$.

A well-known and widely used model for the synchronization among coupled
oscillators is the {\it Kuramoto model}, which considers $n$ coupled
oscillators with state $\theta_{i}\in\mbb T^1$ with the dynamics
\begin{equation}
  \dot \theta_{i}
  =
  \omega_{i} - \frac{K}{n} \sum\nolimits_{j=1}^{n} \sin (\theta_{i} - \theta_{j})
  , \quad
  i \in \until{n}
  \label{eq: Kuramoto system}
  \,,
\end{equation}
where $K$ is the coupling strength and $\omega_{i}$ is the natural
frequency of oscillator $i$. Unlike for the consensus protocol \eqref{eq:
  Consensus protocol}, different levels of consensus or {\it synchronization} can be distinguished for the Kuramoto model \eqref{eq:
  Kuramoto system}: The case when all angles $\theta_{i}(t)$ converge to a common angle $\theta_{\infty} \in \mbb T^{1}$ as $t \to \infty$ is referred to as {\it phase synchronization} and can only occur if all natural frequencies are identical. If the natural
frequencies are non-identical, then each phase difference $\theta_{i}(t) - \theta_{j}(t)$ can converge to a constant value, but this value is
not necessarily zero. 
A solution $\map{\theta}{\real_{\geq0}}{\mbb T^n}$ to the Kuramoto
model~\eqref{eq: Kuramoto system} is \emph{phase cohesive} if there exists
a length $\gamma \in [0,\pi[$ such that $\theta(t)\in\bar\Delta(\gamma)$ for all
$t\geq0$, i.e., at each time $t$ there exists an arc of length $\gamma$
containing all angles $\theta_i(t)$.  A solution
$\map{\theta}{\real_{\geq0}}{\mbb T^n}$ achieves \emph{exponential frequency synchronization} if all frequencies $\dot \theta_{i}(t)$ converge
exponentially fast to a common frequency $\dot \theta_{\infty} \in \mbb R$ as $t \to \infty$.  Finally, a solution
$\map{\theta}{\real_{\geq0}}{\mbb T^n}$ achieves {\it exponential
  synchronization} if it is phase cohesive and it achieves exponential
frequency synchronization.
In this case, all phases become constant in a rotating\,\,coordinate frame with frequency $\dot \theta_{\infty}$, and
hence the terminology {\it phase locking} is sometimes also used in the literature.

\section{Models and Problem Setup in Synchronization and Transient
  Stability Analysis}
\label{Section: Classical Transient Stability Analysis}
\subsection{The Mathematical Model of a Power Network}\label{Subsection: The Mathematical Model of a Power Network}

In a power network with $n$ generators we associate to each generator its
internal voltage $E_{i}>0$, its active power output $P_{\textup{e},i}$, its
mechanical power input $P_{\textup{m},i}>0$, its inertia $M_{i}>0$, its
damping constant $D_i>0$, and its rotor angle $\theta_{i}$ measured with
respect to a rotating frame with frequency $f_{0}$. All parameters are
given in per unit system, except for $M_{i}$ and $D_i$ which are given in
seconds, and $f_{0}$ is typically given as $50$ Hz or $60$ Hz. The rotor
dynamics of generator $i$ are then given by the classic constant-voltage
behind reactance model of interconnected {\it swing equations}
\cite{MAP:89,PMA-AAF:77,PK:94}
\begin{align*}
	M_{i} \ddot \theta_{i}
	&=
	P_{\textup{m},i} - E_{i}^{2}G_{ii} - D_{i} \dot{\theta}_{i} - P_{\textup{e},i} 
	,\quad
	i \in \until{n}
	\,.
\end{align*}
Under the common assumption that the loads are modeled as passive
admittances, all passive nodes of a power network can be eliminated
(c.f. {\it Kron reduction} \cite{FD-FB:11d}) resulting in the {\it reduced (transfer)
  admittance matrix} $Y=Y^{T} \in \mbb C^{n \times n}$, where $Y_{ii}$ is
the self-admittance of generator $i$ and $\Re(Y_{ij})\geq0$ and
$\Im(Y_{ij})>0$, $i \neq j$, are the transfer conductance and (inductive)
transfer susceptance between generator $i$ and $j$ in per unit values.
With the {\it power-angle relationship}, the active output power
$P_{\textup{e},i}$ is then
\begin{equation*}
  P_{\textup{e},i} 
  =
  \sum\nolimits_{j=1}^{n} E_{i} E_{j} \bigl( \Re(Y_{ij}) \cos(\theta_{i} - \theta_{j}) + \Im(Y_{ij}) \sin(\theta_{i} - \theta_{j}) \bigr).
\end{equation*}
Given the transfer admittance $Y_{ij}$ between generator $i$ and $j$,
define the magnitude $\abs{Y_{ij}} > 0$ and the \emph{phase shift}
$\varphi_{ij} = \arctan(\Re(Y_{ij})/\Im(Y_{ij})) \in {[0,\pi/2[}$ depicting
the energy loss due to the transfer conductance $\Re(Y_{ij})$.  Recall that
a lossless network is characterized by zero phase shifts.  Furthermore, we
define the {\it natural frequency} $\omega_{i} := P_{\textup{m},i} -
E_{i}^{2}\Re(Y_{ii})$ (effective power input to generator $i$) and the {\it
  coupling weights} $P_{ij} := E_{i}E_{j}|Y_{ij}|$ (maximum power
transferred between generators $i$ and $j$) with $P_{ii} :=0$ for $i \in
\until{n}$. The {\it network--reduced} power system model can then be
formulated compactly as
\begin{equation}
	M_{i} \ddot \theta_{i}
	=
	- D_{i} \dot{\theta}_{i} + \omega_{i}
	-
	\sum\nolimits_{j=1}^{n} P_{ij} \sin(\theta_{i} - \theta_{j} + \varphi_{ij})
	\label{eq: Classical model}
	\,.
\end{equation}
Typically, a dynamical model for the internal
voltage of generator $i$ is given as $ \dot{E}_{i} =
\dot{E}_{i}(E_{i},u_{i},\theta_{i}-\theta_{j}) $, where $u_{i}$ is the
field excitation and can be used as a control input~\cite{RO-MG-AA-YS-TS:05}.
Higher order electrical and flux dynamics can be reduced~\cite{PWS-MAP:98}
into an augmented damping constant $D_{i}$ in equation~\eqref{eq: Classical
  model}. The generator's internal excitation control essentially increases
the {\it damping torque} towards the net frequency and can also be reduced
into the damping constant $D_{i}$ \cite{PMA-AAF:77,PWS-MAP:98}. It is
commonly agreed that the classical model \eqref{eq: Classical model}
captures the power system dynamics sufficiently well during the first
swing. Thus, we omit higher order dynamics and control effects and assume
they are incorporated into the model \eqref{eq: Classical model}.  We
remark\,that all our results are also valid if $E_{i} = E_{i}(t)$ is a
smooth, bounded, and strictly positive time-varying parameter.



A {\it frequency equilibrium} of \eqref{eq: Classical model} is
characterized by $\dot{\theta} = \fvec 0$ and by the (reduced) {\it real power flow
  equations}
\begin{equation}
	Q_{i}(\theta)
	:= 
	\omega_{i}
	-
	\sum\nolimits_{j=1}^{n} P_{ij} \sin(\theta_{i} - \theta_{j} + \varphi_{ij})
	\equiv
	0
	,\quad
	i \in \until{n}
	\,.
	\label{eq: Classical model - power flow}
\end{equation}
depicting the power balance. More general, the generators are said to be in
a {\it synchronous equilibrium} if all angular distances
$|\theta_{i}-\theta_{j}|$ are constant and bounded (phase cohesive) and all frequencies are identical $\dot{\theta}_{i} = \dot{\theta}_{j}$. {\it Exponential
  synchronization} is then understood as defined before for the Kuramoto
model \eqref{eq: Kuramoto system}.  

In order to analyze the synchronization problem,
system \eqref{eq: Classical model} is usually formulated in relative
coordinates \cite{LFCA-NGB:99}.
To render the resulting dynamics self-contained, uniform damping is sometimes
assumed, i.e., $D_{i}/M_{i}$ is constant.  Some other times, the existence
of an infinite bus (a stationary generator without dynamics) as reference
is postulated \cite{HDC-FFW-PPV:88,PV-FFW-RLC:85}. We remark
that both of these assumptions are not physically justified but are
mathematical simplifications to reduce the synchronization problem to a
stability analysis.

\subsection{Review of Classic Transient Stability Analysis}
\label{Subsection: Review of Classical Transient Stability Analysis}

Classically, transient stability analysis deals with a special case of the
synchronization problem, namely the stability of a post-fault frequency
equilibrium, that is, a new equilibrium of \eqref{eq: Classical model}
arising after a change in the network parameters or topology.
To answer this question various sophisticated analytic and numeric methods
have been developed
\cite{PV-FFW-RLC:85,MAP:89,HDC-CCC-GC:95,LFCA-FS-NGB:01}, which typically
employ the Hamiltonian structure of system \eqref{eq: Classical
  model}. Since in general a Hamiltonian function for model \eqref{eq:
  Classical model} with non-trivial network conductance $\Re(Y_{ij})>0$ (or
equivalently $\varphi_{ij} > 0$) does not exist \cite{HDC:89}, early
transient stability approaches neglect the phase shifts $\varphi_{ij}$
\cite{HDC-FFW-PPV:88,JB-CIB:82,PV-FFW-RLC:85}. In this case, the power network model
\eqref{eq: Classical model} takes form
\begin{equation}
	M \, \ddot \theta 
	= 
	- D \dot \theta - \nabla U(\theta)^{T}
	\label{eq: Classical model - gradient system}
	\,,
\end{equation} 
where $\nabla$ is the gradient and $\map{U}{]-\pi,\pi]^n}{\real}$ is the potential energy given up to an additive constant\,\,by
\begin{equation}
  U(\theta)
  =
  - \sum\nolimits_{i=1}^{n} \Bigl( \omega_{i} \theta_{i}
  +  \sum\nolimits_{j=1}^{n} P_{ij} \bigl(1 - \cos(\theta_{i} -\theta_{j}) \bigr) \Bigr)
  \label{eq: potential energy}
  \,.
\end{equation}
When system \eqref{eq: Classical model - gradient system} is formulated in
relative or reference coordinates (that feature equilibria), the {\it
  energy function} $ (\theta,\dot{\theta}) \mapsto (1/2) \,
\dot{\theta}^{T} M \dot{\theta} + U (\theta) $ serves
(locally) as a Lyapunov function. In combination with the invariance
principle, we clearly have that the dynamics \eqref{eq: Classical model -
  gradient system} converge to $\dot{\theta} = \fvec 0$ and the largest
invariant zero level set of $\nabla U (\theta)$. In order to estimate the
region of attraction of a stable equilibrium, algorithms such as {\it PEBS}
\cite{HDC-FFW-PPV:88} or {\it BCU} \cite{HDC-CCC:95} consider the
associated dimension-reduced gradient flow
\begin{equation}
	\dot{\theta}
	=
	- \nabla U(\theta)^{T}
	\label{eq: Classical model - gradient system - reduced model}
	\,.
\end{equation}
Then $(\theta^{*},\fvec 0)$ is a hyperbolic type-$k$ equilibrium of
\eqref{eq: Classical model - gradient system}, i.e., the Jacobian has $k$
stable eigenvalues, if and only if $\theta^{*}$ is a hyperbolic type$-k$ equilibrium of \eqref{eq:
  Classical model - gradient system - reduced model}, and if a generic
transversality condition holds, then the regions of attractions of both
equilibria are bounded by the stable manifolds of the same unstable
equilibria \cite[Theorems 6.2-6.3]{HDC-FFW-PPV:88}. This topological
equivalence between \eqref{eq: Classical model - gradient system} and
\eqref{eq: Classical model - gradient system - reduced model} can also be
extended to ``sufficiently small'' transfer conductances \cite[Theorem
5.7]{HDC-CCC:95}. For further interesting relationships among the systems
\eqref{eq: Classical model - gradient system} and \eqref{eq: Classical
  model - gradient system - reduced model}, we refer to
\cite{HDC-FFW-PPV:88,HDC-CCC:95,HDC-CCC-GC:95,PV-FFW-RLC:85}.
Other approaches to lossy power networks with non-zero transfer
conductances compute numerical energy functions \cite{TA-RP-SV:79} or
employ an extended invariance principle \cite{FS-LFCA-JBAL-NGB:05}. Based
on these results computational methods were developed to approximate the
stability boundaries of \eqref{eq: Classical model - gradient system} and
\eqref{eq: Classical model - gradient system - reduced model} by level sets
of energy functions or stable manifolds of unstable equilibria.

To summarize the shortcomings of the classical transient stability analysis
methods, they consider simplified models formulated in reference or
relative coordinates (with uniform damping assumption) and result
mostly in numerical procedures rather than in concise
and simple conditions. For lossy power networks the cited articles consider
either special benchmark problems or networks with ``sufficiently small''
transfer conductances. To the best of our knowledge there are no results
quantifying this smallness of $\Re(Y_{ij})$ or $\varphi_{ij}$ for arbitrary
networks. Moreover, from a network perspective the existing methods do not
result in explicit and concise conditions relating synchronization to the
network's state, parameters, and topology. The following sections will
address these questions quantitatively via purely algebraic tests.


\section{The Non-Uniform Kuramoto Model and Main Synchronization Result}\label{Section: The Non-Uniform Kuramoto Model and Main Synchronization Result}

\subsection{The Non-Uniform Kuramoto Model}\label{Subsection: The Non-Uniform Kuramoto Model}

As we have already mentioned, there is a striking similarity between the
power network model \eqref{eq: Classical model} and the Kuramoto model
\eqref{eq: Kuramoto system}. To study this similarity, we define the
\emph{non-uniform Kuramoto model} by
\begin{equation}
  	D_{i}\, \dot{\theta}_{i}
  	=
  	\omega_{i}  - \sum\nolimits_{j=1}^{n} P_{ij} \sin(\theta_{i} - \theta_{j} + \varphi_{ij})
  	, \quad
  	i \in \until{n}
  	\label{eq: Non-uniform Kuramoto model}
  	\,,
\end{equation}
where we assume that the parameters satisfy $D_{i}>0$, $\omega_{i}\in\mbb
R$, $P_{ij}>0$, and $\varphi_{ij}\in[0,\pi/2[$, for all $i,j\in\until{n}$,
$i\neq{j}$; by convention, $P_{ii}$ and $\varphi_{ii}$ are set to
zero. System \eqref{eq: Non-uniform Kuramoto model} may be regarded as a
generalization of the classic Kuramoto model \eqref{eq: Kuramoto system}
with multiple time-constants $D_{i}$ and non-homogeneous but symmetric
coupling terms $P_{ij}$ and phase shifts $\varphi_{ij}$.  The non-uniform
Kuramoto model \eqref{eq: Non-uniform Kuramoto model} will serve as a link
between the power network model \eqref{eq: Classical model}, the Kuramoto
model \eqref{eq: Kuramoto system}, and the consensus protocol\,\,\eqref{eq:
  Consensus protocol}.

 \begin{remark}\label{Remark: second-to-first}
   {\bf(Second-order systems and their first-order approximations:)} The
   non-uniform Kuramoto model \eqref{eq: Non-uniform Kuramoto model} can be
   seen as a long-time approximation of the second order system \eqref{eq:
     Classical model} for a small ``inertia over damping ratio''
   $M_{i}/D_{i}$.
   Note the analogy between the non-uniform Kuramoto model \eqref{eq:
     Non-uniform Kuramoto model} and the dimension-reduced gradient system
   \eqref{eq: Classical model - gradient system - reduced model} studied in
   classic transient stability analysis to approximate the stability
   properties of the second-order system \eqref{eq: Classical model -
     gradient system}
   ~\cite{HDC-FFW-PPV:88,HDC-CCC:95,PV-FFW-RLC:85}. 
   Both models are of first order, have the same right-hand side, and
   differ only in the time constants $D_{i}$. Thus, both models have the
   same equilibria with the same stability properties and with regions of
   attractions bounded by the same separatrices \cite[Theorems
   3.1-3.4]{HDC-FFW-PPV:88}.  The reduced system \eqref{eq: Classical model
     - gradient system - reduced model} is formulated as a gradient-system
   to study the stability of the equilibria of \eqref{eq: Classical model -
     gradient system - reduced model} (possibly in relative
   coordinates). The non-uniform Kuramoto model \eqref{eq: Non-uniform
     Kuramoto model}, on the other hand, can be directly used to study
   synchronization and reveals the underlying
   network\,\,structure.\,\oprocend
\end{remark}
    
\subsection{Main Synchronization Result}\label{Subsection: Main Synchronization Result}

\noindent We can now state our main result on the power network model \eqref{eq: Classical model} and the non-uniform Kuramoto\,model\,\,\eqref{eq: Non-uniform Kuramoto model}. 

\begin{theorem}
\label{Theorem: Main Synchronization Result}
{\bf(Main synchronization result)}
Consider the power network model \eqref{eq: Classical model} and the
non-uniform Kuramoto model \eqref{eq: Non-uniform Kuramoto model}. Assume that the minimal lossless coupling of any oscillator to the network is
larger than a critical value, i.e., 
\begin{equation}
	\subscr{\Gamma}{min}
	:=
	n \min_{i \neq j} \left\{ \frac{P_{ij}}{D_{i}} \cos(\varphi_{ij}) \right\}
	>
	\subscr{\Gamma}{critical}
	:=
	\frac{1}{\cos(\subscr{\varphi}{max})}
	\Bigl(
	\max_{i \neq j}\left|\frac{\omega_{i}}{D_{i}} - \frac{\omega_{j}}{D_{j}}\right| 
	+
	2
	\max_{i \in \until n}
	\sum\limits_{j=1}^{n}
	\frac{P_{ij}}{D_{i}} \sin(\varphi_{ij})
	\Bigr)
  	\label{eq: key-assumption}
  	\,.
\end{equation}
Accordingly, define $\subscr{\gamma}{min} \in [0,\pi/2 - \subscr{\varphi}{max}[$ and $\subscr{\gamma}{max} \in {]\pi/2,\pi]}$ as unique solutions to 
the equations $\sin(\subscr{\gamma}{min}) = \sin(\subscr{\gamma}{max}) = \cos(\subscr{\varphi}{max}) \, \subscr{\Gamma}{critical} / \subscr{\Gamma}{min}$.

\smallskip
\noindent For the non-uniform Kuramoto model, 
\begin{enumerate}

\item {\bf phase cohesiveness}: the set $\bar\Delta(\gamma)$ is positively
  invariant for every $\gamma \in
  [\subscr{\gamma}{min},\subscr{\gamma}{max}]$, and each trajectory
  starting in $\Delta(\subscr{\gamma}{max})$ reaches
  $\bar\Delta(\subscr{\gamma}{min})$; and
  
\item {\bf frequency synchronization:} for every $\theta(0) \in
  \Delta(\subscr{\gamma}{max})$, the frequencies $\dot{\theta}_{i}(t)$
  synchronize exponentially to some frequency $\dot{\theta}_{\infty} \in
  [\subscr{\dot{\theta}}{min}(0) , \subscr{\dot{\theta}}{max}(0)]$.
  
\end{enumerate}
\noindent For the power network model, for all $\theta(0) \in
\Delta(\subscr{\gamma}{max})$ and initial frequencies $\dot \theta_{i}(0)$,
\begin{enumerate}
  
\item[3)] {\bf approximation error:} there exists a constant
  $\epsilon^{*}>0$ such that, if $\epsilon := \subscr{M}{max} / \subscr{D}{min} < \epsilon^{*}$, then the solution
  $(\theta(t),\dot \theta(t))$ of \eqref{eq: Classical model} exists for
  all $t \geq 0$ and   it holds uniformly in $t$ that
  \begin{equation}
    \label{eq: approx-errors}
    \begin{split}
    \bigr(\theta_{i}(t) -  \theta_{n}(t) \bigl) &= \bigl(\bar \theta_{i}(t) - \bar \theta_{n}(t) \bigr)
	+
	\mc O(\epsilon),
	\quad \forall t \geq 0, \; i \in \until{n-1},
	\\
	\dot{\theta}(t) &=  D^{-1} Q(\bar\theta(t)) 
	+
	\mc O(\epsilon),
	\quad\quad \forall t > 0
	\,,      
      \end{split}
    \end{equation}
    where $\bar \theta(t)$ is the solution to the non-uniform Kuramoto
    model \eqref{eq: Non-uniform Kuramoto model} with initial condition
    $\bar \theta(0) = \theta(0)$, and $D^{-1} Q(\bar \theta)$ is the power
    flow \eqref{eq: Classical model - power flow} scaled by the inverse
    damping $D^{-1}$;\,\,and

  \item[4)] {\bf asymptotic approximation error:} there exists $\epsilon$ and
    $\subscr{\varphi}{max}$ sufficiently small, such that the $\mc
    O(\epsilon)$ approximation errors in equation~\eqref{eq: approx-errors}
    converge to zero as $t \to \infty$.
    
\end{enumerate}
\end{theorem}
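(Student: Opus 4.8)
The plan is to establish the two non-uniform Kuramoto claims (1)--(2) directly and then transfer them to the power network \eqref{eq: Classical model} through a singular perturbation argument for (3)--(4).

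For claim (1) I would use the arc length $V(\theta)=\max_{i,j\in\until{n}}|\theta_i-\theta_j|$ as a non-smooth Lyapunov-type function on $\bar\Delta(\gamma)$, where (since $\gamma<\pi$) $V$ equals the spread $\theta_{\max}-\theta_{\min}$ of the angles inside a common arc. Along \eqref{eq: Non-uniform Kuramoto model}, letting $p$ and $q$ index the maximal and minimal angle, the upper Dini derivative satisfies $D^{+}V=\dot\theta_p-\dot\theta_q$. After substituting the dynamics and expanding each $\sin(\theta_i-\theta_j+\varphi_{ij})$ as $\sin(\theta_i-\theta_j)\cos\varphi_{ij}+\cos(\theta_i-\theta_j)\sin\varphi_{ij}$, I would separate the ``restoring'' terms $\tfrac{P_{ij}}{D_i}\cos\varphi_{ij}\sin(\theta_i-\theta_j)$ from the ``loss'' terms $\tfrac{P_{ij}}{D_i}\sin\varphi_{ij}\cos(\theta_i-\theta_j)$. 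Using $\sin(\theta_p-\theta_j)\geq 0\geq \sin(\theta_q-\theta_j)$ on the arc, each restoring weight is replaced by its lower bound $\subscr{\Gamma}{min}/n$; the decisive step is then the sum-to-product identity $\sin(\theta_p-\theta_j)-\sin(\theta_q-\theta_j)=2\sin(V/2)\cos(\tfrac{\theta_p+\theta_q}{2}-\theta_j)$, each summand of which is $\geq\cos(V/2)$ because every $\theta_j$ lies within $V/2$ of the arc midpoint. Summing $n$ such terms produces the factor $n$ that cancels the $1/n$ in $\subscr{\Gamma}{min}$, yielding the clean bound $D^{+}V\leq \cos(\subscr{\varphi}{max})\subscr{\Gamma}{critical}-\subscr{\Gamma}{min}\sin(V)$. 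Since $\sin(\subscr{\gamma}{min})=\sin(\subscr{\gamma}{max})=\cos(\subscr{\varphi}{max})\subscr{\Gamma}{critical}/\subscr{\Gamma}{min}$, this gives $D^{+}V\leq 0$ for $V\in[\subscr{\gamma}{min},\subscr{\gamma}{max}]$ and $D^{+}V<0$ in the interior, hence invariance of each $\bar\Delta(\gamma)$ and entry into $\bar\Delta(\subscr{\gamma}{min})$.

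For claim (2) I would differentiate \eqref{eq: Non-uniform Kuramoto model} to obtain the linear time-varying system $D_i\ddot\theta_i=-\sum_j a_{ij}(t)(\dot\theta_i-\dot\theta_j)$ for the frequencies, with weights $a_{ij}(t)=P_{ij}\cos(\theta_i(t)-\theta_j(t)+\varphi_{ij})$. The role of claim (1) is that after the trajectory enters $\bar\Delta(\subscr{\gamma}{min})$ --- where $\subscr{\gamma}{min}+\subscr{\varphi}{max}<\pi/2$ --- all arguments stay in an open half-circle, so $a_{ij}(t)\geq P_{ij}\cos(\subscr{\gamma}{min}+\subscr{\varphi}{max})>0$ uniformly and the frequency dynamics becomes a consensus system over the complete graph. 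Analyzing the frequency spread $W(t)=\max_i\dot\theta_i-\min_i\dot\theta_i$ by the same Dini-derivative technique and retaining only the extremal interaction gives $D^{+}W\leq -\bigl(\subscr{P}{min}\cos(\subscr{\gamma}{min}+\subscr{\varphi}{max})/\subscr{D}{max}\bigr)\,W$, hence exponential decay of $W$; because $\max_i\dot\theta_i$ is non-increasing and $\min_i\dot\theta_i$ non-decreasing, both converge to a common $\dot\theta_\infty$ in the stated interval.

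For claims (3)--(4) I would cast \eqref{eq: Classical model} into standard singular perturbation form using the slow variables $x_i=\theta_i-\theta_n$, $i\in\until{n-1}$ (relative angles, to quotient out the rotational invariance) and the fast variables $y_i=\dot\theta_i-D_i^{-1}Q_i(\theta)$ measuring deviation from the quasi-steady-state, with perturbation parameter $\epsilon=\subscr{M}{max}/\subscr{D}{min}$. Setting $\epsilon=0$ collapses $y$ to zero and identifies the reduced dynamics $\dot x_i=D_i^{-1}Q_i(x)-D_n^{-1}Q_n(x)$ as precisely the non-uniform Kuramoto model in relative coordinates; the boundary-layer system is diagonal (each $y_i$ decays at rate $\sim D_i/M_i$) and hence exponentially stable uniformly in $x$. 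Tikhonov's theorem then yields the $\mc O(\epsilon)$ estimates \eqref{eq: approx-errors}, the frequency estimate holding only for $t>0$ because of the initial boundary layer. To upgrade this to uniformity in $t$ I would invoke the infinite-horizon version of Tikhonov, whose hypotheses are supplied by claims (1)--(2): the reduced system admits the positively invariant region $\bar\Delta(\subscr{\gamma}{min})$ on which the relative frequencies decay exponentially. For claim (4) I would note that the phase-locked configurations of both systems satisfy the identical equations $D_i^{-1}Q_i(\theta^{*})=\dot\theta_\infty$ for all $i$; exponential decay of the relative frequencies forces $x(t)$ to converge to such an equilibrium $x^{*}$, and for $\epsilon$ and $\subscr{\varphi}{max}$ small enough this equilibrium is unique and locally exponentially stable in $\bar\Delta(\subscr{\gamma}{min})$ (the near-lossless structure restores the gradient-like, Hurwitz Jacobian underlying the topological-equivalence results for small transfer conductances), so matching the limit sets of the full and reduced trajectories drives the $\mc O(\epsilon)$ errors to zero.

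The main obstacle is the singular perturbation step, not the Kuramoto estimates. Because the synchronized state is a rotating solution rather than an isolated equilibrium in absolute coordinates, the standard infinite-horizon Tikhonov theorem does not apply verbatim; the care lies in passing to relative coordinates, verifying there that the reduced dynamics inherit the exponentially attracting invariant set furnished by claims (1)--(2), and checking that boundary-layer stability is uniform over the whole phase-cohesive region (where the Jacobian of $Q$ need not be sign-definite once $\subscr{\varphi}{max}>0$). Claim (4) adds the asymptotic matching of the full- and reduced-system limit sets, which is exactly where the smallness of $\subscr{\varphi}{max}$ is needed to guarantee a unique, exponentially stable phase-locked equilibrium.
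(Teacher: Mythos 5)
Your proposal follows essentially the same route as the paper's: claim 1 reproduces the proof of the paper's first synchronization condition (the non-smooth arc-length function $V(\theta)=\max_{i,j}|\theta_i-\theta_j|$, its upper Dini derivative, the split into lossless and lossy coupling, and the identical sum-to-product bound yielding $D^{+}V\leq\cos(\subscr{\varphi}{max})\,\subscr{\Gamma}{critical}-\subscr{\Gamma}{min}\sin V$), while claims 3--4 mirror the paper's grounded-coordinates construction and Tikhonov-based singular perturbation theorem, including the concluding appeal to topological equivalence and structural stability for small $\subscr{\varphi}{max}$. The only deviation is cosmetic: for claim 2 you contract the frequency spread $\max_i\dot\theta_i-\min_i\dot\theta_i$ by a direct Dini-derivative estimate, whereas the paper differentiates the dynamics and invokes Moreau's contraction property for the resulting time-varying frequency consensus system --- an equivalent argument in the complete-graph setting at hand.
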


The proof of Theorem \ref{Theorem: Main Synchronization Result} is based on
a singular perturbation analysis of the power network model \eqref{eq:
  Classical model} (see Section \ref{Section: Singular Perturbation Analysis
  of Synchronization}) and a synchronization analysis of the non-uniform
Kuramoto model \eqref{eq: Non-uniform Kuramoto model} (see Section
\ref{Section: Synchronization of Non-uniform Kuramoto System}) and will be
postponed to the end of Section \ref{Section: Synchronization of
  Non-uniform Kuramoto System}. We discuss the assumption that the {\it
  perturbation parameter} $\epsilon$ needs to be small separately in the
next subsection and state the following remarks to Theorem \ref{Theorem:
  Main Synchronization Result}:

\begin{remark}[Physical interpretation of Theorem \ref{Theorem: Main Synchronization Result}:]
\label{Remark: Physical interpretation of Main Result}
The right-hand side of condition \eqref{eq: key-assumption} states the
worst-case non-uniformity in natural frequencies (the difference in
effective power inputs at each generator) and the worst-case lossy coupling
of a generator to the network ($P_{ij} \sin(\varphi_{ij}) = E_{i}
E_{j} \Re(Y_{ij})$ reflects the transfer conductance), both of which are
scaled with the rates $D_{i}$. The term $\cos(\subscr{\varphi}{max}) =
\sin(\pi/2-\subscr{\varphi}{max})$ corresponds to phase cohesiveness in $\Delta(\pi/2 - \subscr{\varphi}{max})$, which is necessary for the latter consensus-type
analysis.  These negative effects have to be dominated by the left-hand side of
\eqref{eq: key-assumption}, which is a lower bound for $\min_{i} \bigl\{ \sum_{j=1}^{n} \bigl(P_{ij} \cos(\varphi_{ij})/D_{i}\bigr) \bigr\}$, the worst-case
lossless coupling of a node to the network.  The multiplicative gap $\subscr{\Gamma}{critical} / \subscr{\Gamma}{min}$ between the right- and
the left-hand side in \eqref{eq: key-assumption} can be understood as a
robustness margin that additionally gives a {\it practical stability} result determining the admissible initial and the possible 
ultimate lack of phase cohesiveness in
$\bar\Delta(\subscr{\gamma}{min})$ and $\bar\Delta(\subscr{\gamma}{max})$.

In summary, the conditions of Theorem \ref{Theorem: Main Synchronization
  Result} read as ``the network connectivity has to dominate the network's
non-uniformity, the network's losses, and the lack of phase cohesiveness.''
In Theorem \ref{Theorem: Main Synchronization Result} we present 
the scalar synchronization condition \eqref{eq: key-assumption}, the
estimate for the region of attraction $\Delta(\subscr{\gamma}{max})$, and
the ultimate phase cohesive set $\bar\Delta(\subscr{\gamma}{min})$. In the
derivations leading to Theorem \ref{Theorem: Main Synchronization Result}
it is possible to trade off a tighter synchronization condition against a
looser estimate of the region of attraction, or a single loose scalar
condition against $n(n-1)/2$ tight pairwise conditions. These
  tradeoffs are explored in the appendix of this document. Finally, we
remark that the coupling weights $P_{ij}$ in condition \eqref{eq:
  key-assumption} are not only the reduced power flows but reflect for
uniform voltages $E_{i}$ and phase shifts $\varphi_{ij}$ also the
\emph{effective resistance} of the original (non-reduced) network topology
\cite{FD-FB:11d}.  Moreover, condition \eqref{eq:
  key-assumption} indicates at which generator the damping torque has to be
increased or decreased (via local power system stabilizers) in order to
meet the sufficient synchronization conditions.

The power network model \eqref{eq: Classical model} inherits the
synchronization condition \eqref{eq: key-assumption} in the relative
coordinates $\theta_{i} - \theta_{n}$ and up to the approximation error
\eqref{eq: approx-errors} which is of order $\epsilon$ and eventually
vanishes for $\epsilon$ and $\subscr{\varphi}{max}$ sufficiently small. The
relative coordinates can be shown to be well-posed (see Section
\ref{Section: Singular Perturbation Analysis of Synchronization}).
The convergence of the power network model only from almost all initial
conditions is a consequence of the existence of saddle points in the
non-uniform Kuramoto model. \oprocend
\end{remark}


\begin{remark}[Refinement of Theorem \ref{Theorem: Main Synchronization
    Result}:]
  Theorem \ref{Theorem: Main Synchronization Result} can also be stated
  for 
  two-norm bounds on the 
  parameters involving the algebraic connectivity (see Theorem
  \ref{Theorem: Synchronization Condition II}).  For a lossless network,
  explicit values for the synchronization frequency and the exponential
  synchronization rate as well as conditions for phase synchronization can
  be derived (see Theorems \ref{Theorem: Frequency synchronization} and
  \ref{Theorem: Phase Synchronization}).
%
  When specialized to the classic 
  Kuramoto model \eqref{eq: Kuramoto system}, the sufficient condition \eqref{eq:
    key-assumption} is improves the results
  \cite{FDS-DA:07,JLvH-WFW:93,NC-MWS:08,AJ-NM-MB:04,GSS-UM-FA:09}, and it can also shown to be a tight bound.  
  We refer the reader to the detailed comments in Section\,\,\ref{Section:
    Synchronization of Non-uniform Kuramoto System}.\oprocend
\end{remark}


\subsection{Discussion of the  Perturbation Assumption}\label{Subsection: Discussion of the epsilon-Assumption}

The assumption that each generator is strongly overdamped is captured by
the smallness of the perturbation parameter $\epsilon = \subscr{M}{max} /
 \subscr{D}{min}$. This choice of the perturbation parameter
and the subsequent singular perturbation analysis (in Section
\ref{Section: Singular Perturbation Analysis of Synchronization}) is
similar to the analysis of Josephson arrays \cite{KW-PC-SHS:98}, coupled
overdamped mechanical pendula \cite{RDL:08}, flocking models \cite{SYH-CL-BR-MS:11}, and also classic transient
stability analysis \cite[Theorem 5.2]{HDC-FFW-PPV:88},
\cite{DS-UR-BS-MP:01}. In the linear case, this analysis resembles the
well-known overdamped harmonic oscillator, which features one slow and one
fast eigenvalue. The overdamped harmonic oscillator exhibits two
time-scales and the fast eigenvalue corresponding to the frequency damping
can be neglected in the long-term phase dynamics. In the non-linear case
these two distinct time-scales are captured by a singular perturbation
analysis. In short, this reduction of a coupled-pendula system corresponds
to the assumption that damping to a synchronization manifold and
synchronization itself occur on separate time scales.

In the application to realistic generator models one has to be careful
under which operating conditions $\epsilon$ is indeed a small physical
quantity. Typically, $M_{i} \in [2 \textup s, 12 \textup s]/(2\pi f_{0})$ depending on
the type of generator and the mechanical damping (including damper winding
torques) is poor: $D_{i} \in [1,3]/(2 \pi f_{0})$. However, for the
synchronization problem also the generator's internal excitation control
have to be considered, which increases the {\it damping torque} to $D_{i}
\in [10,35]/(2 \pi f_{0})$ depending on the system load
\cite{PK:94,PMA-AAF:77,PWS-MAP:98}. In this case, $\epsilon \in \mc O(0.1)$
is indeed a small quantity and a singular perturbation approximation is
accurate. In fact, the recent power systems literature discusses the need for sufficiently large damping to enhance transient stability, see \cite{LFCA-NGB:00,CCC-HDC:10} and references therein.


We note that simulation studies show an accurate approximation of the power
network by the non-uniform Kuramoto model also for values of $\epsilon \in
\mc O(1)$, i.e., they indicate that the threshold $\epsilon^{*}$ may be
sizable. The theoretical reasoning is the topological equivalence discussed
in Subsection \ref{Subsection: Review of Classical Transient Stability
  Analysis} between the power network model \eqref{eq: Classical model} and
the first-order model \eqref{eq: Classical model - gradient system -
  reduced model}, which is again topologically equivalent to the
non-uniform Kuramoto model \eqref{eq: Non-uniform Kuramoto model}, as
discussed in Remark \ref{Remark: second-to-first}.  The synchronization condition \eqref{eq: key-assumption} on the non-uniform Kuramoto model \eqref{eq: Non-uniform Kuramoto model} guarantees exponential stability of the non-uniform Kuramoto dynamics formulated in relative coordinates $\theta_{i} - \theta_{n}$, which again implies local exponential stability of the power network model \eqref{eq: Classical model} in relative coordinates. These arguments are elaborated in detail in the next section.
Thus, from the viewpoint of
topological equivalence,  Theorem \ref{Theorem: Main Synchronization Result}
holds locally completely independent of $\epsilon>0$, and the magnitude of
$\epsilon$ gives a bound on the approximation errors \eqref{eq: approx-errors} during\,\,transients.

The analogies between the power network model \eqref{eq: Classical model}
and the reduced model \eqref{eq: Classical model - gradient system -
  reduced model}, corresponding to the non-uniform Kuramoto model
\eqref{eq: Non-uniform Kuramoto model}, are directly employed in the {\it
  PEBS} \cite{HDC-FFW-PPV:88} and {\it BCU} algorithms
\cite{HDC-CCC:95}. These algorithms are not only scholastic but applied by
the power industry \cite{H-DC:10}, which additionally supports the validity
of the approximation of the power network model by the non-uniform
Kuramoto\,\,model.




\clearpage
\section{Singular Perturbation Analysis of Synchronization}
\label{Section: Singular Perturbation Analysis of Synchronization}


\subsection{Time-Scale Separation of the Power Network Model}\label{Subsection: Time-Scale Separation of the Power Network Model}

In this section, we put the approximation of the power network model
\eqref{eq: Classical model} by the non-uniform Kuramoto model \eqref{eq:
  Non-uniform Kuramoto model} on solid mathematical ground via a singular
perturbation analysis. The analysis by Tikhonov's method \cite{HKK:02}
requires a system evolving on Euclidean space and exponentially stable
fixed points. In order to satisfy the assumptions of Tikhonov's theorem, we
introduce two concepts. 

First, we introduce a smooth map from a suitable subset of $\mathbb{T}^n$
to a compact subset of $\real^{n-1}$. 
For $\gamma \in {[0,\pi[}$, define the map
$\map{\groundedphases}{\Delta(\gamma)} {\subscr{\Delta}{grnd}(\gamma) :=
  \setdef{\bar\delta\in\real^{n-1}}{|\bar\delta_{i}| < \gamma,\,
    \max_{i,j}|\bar\delta_i-\bar\delta_j|<\gamma,\, i,j \in \until{n-1}}}$ that
associates to the angles $(\theta_1,\dots,\theta_n) \in \Delta(\gamma)$ the
array of angle differences $\bar\delta$ with components
$\bar\delta_i=\theta_i-\theta_n$, for $i\in\until{n-1}$.  This map is well
defined, that is, $\bar\delta \in \subscr{\Delta}{grnd}(\gamma)$, because
$|\bar\delta_{i}| = |\theta_{i} - \theta_{n}| < \gamma$ for all $i \in \until{n-1}$ and
$|\bar\delta_{i} - \bar\delta_{j}| = |\theta_{i} - \theta_{j}| < \gamma$ for all
distinct $i,j \in \until{n-1}$.  Also, this map is smooth because $\gamma<\pi$ implies that all
angles take value in an open half-circle and their pairwise differences are
smooth functions (see Section~\ref{subsec:geom-torus}).  As a final remark,
note that the angle differences $\bar\delta_1,\dots,\bar\delta_{n-1}$ are
well-known in the transient stability \cite{HDC-CCC:95,JB:84} and in the
Kuramoto literature \cite{DA-JAR:04}, and we refer to them as
\emph{grounded angles} in the spirit of circuit theory. The sets
$\Delta(\pi)$ and $\subscr{\Delta}{grnd}(\pi)$ as well as the map $\theta
\mapsto \bar\delta = \textup{grnd}(\theta)$ are illustrated in
Figure\,\ref{Fig: grnd map}.
\begin{figure}[htbp]
	\centering{
	\includegraphics[scale=0.5]{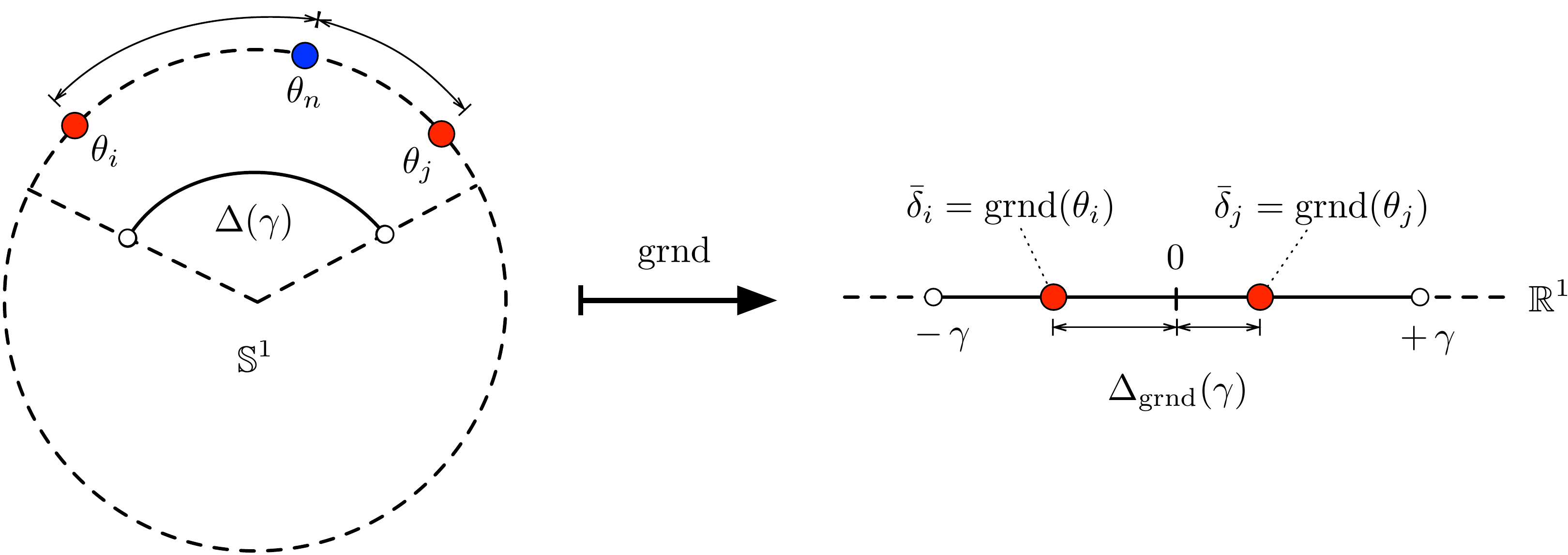}
	\caption{Illustration of the map $\textup{grnd}:\, \Delta(\gamma)
          \to \subscr{\Delta}{grnd}(\gamma)$. The map $\groundedphases$ can
          be thought of as as a symmetry-reducing projection from
          $\Delta(\gamma)$ (illustrated as subset of $\mbb S^{1}$) to
          $\subscr{\Delta}{grnd}(\gamma)$ (illustrated as subset of $\mbb
          R^{1}$), where $\theta_{n}$ is projected to the origin 0. The set
          $\Delta(\gamma)$ and the map $\textup{grnd}$ are invariant under
          translations on $\mbb T^{n}$ that is, under maps of the form
          $(\theta_1,\dots,\theta_n) \mapsto (\theta_1+\alpha, \dots
          \theta_n+\alpha)$.}
%
%
	\label{Fig: grnd map}
	}
\end{figure}

Second, by formally computing the difference between the angles $\dot{\theta}_i$ and
$\dot{\theta}_n$, we define \emph{grounded Kuramoto model} with state
${\delta}\in\real^{n-1}$ by
\begin{equation}
  \dot {{\delta}}_{i}
  =        
  \frac{\omega_{i}}{D_{i}} - \frac{\omega_{n}}{D_{n}}
  - \sum_{j=1, j\not=i}^{n-1} \!\!\Big(
  \frac{P_{ij}}{D_i} \sin(\delta_i-\delta_j+\varphi_{ij})
        + \frac{P_{nj}}{D_n}\sin(\delta_j-\varphi_{jn})
        \Big) - 
        \Big(
        \frac{P_{in}}{D_i}\sin(\delta_i+\varphi_{in}) + \frac{P_{in}}{D_n}\sin(\delta_i-\varphi_{in})
        \Big)
	\label{eq: grounded Kuramoto model}
	\,.
\end{equation}

\begin{lemma}[Properties of grounded Kuramoto model]
\label{Lemma: Properties of grounded Kuramoto model}
Let $\gamma \in {[0,\pi[}$ and let $\map{\theta}{\real_{\geq0}}{\mbb{T}^n}$
be a solution to the non-uniform Kuramoto model \eqref{eq: Non-uniform
  Kuramoto model} satisfying $\theta(0)\in\Delta(\gamma)$.
Let $\map{\delta}{\real_{\geq0}}{\real^{n-1}}$ be the solution to the
grounded Kuramoto model \eqref{eq: grounded Kuramoto model} with initial
condition $\delta(0)=\groundedphases(\theta(0)) \in
\subscr{\Delta}{grnd}(\gamma)$.
 
Then, $\delta(t)=\groundedphases(\theta(t))$ for
all $t\geq0$, if any one of the two following equivalent conditions holds:
\begin{enumerate}
  
\item {\bf phase cohesiveness:} the angles $\theta(t)$ take value in
  $\Delta(\gamma)$ for all time $t\geq0$; and
  
\item {\bf well-posedness:} the grounded angles $\delta(t)$ take value in
  $\subscr{\Delta}{grnd}(\gamma)$ for all time $t\geq0$.
\end{enumerate}
%
%
  Moreover, the following two statements are equivalent for any $\gamma \in {[0,\pi[}$:
  \begin{enumerate}\setcounter{enumi}{2}
  \item {\bf exponential frequency synchronization:} each trajectory of the non-uniform Kuramoto model satisfying the
    phase cohesiveness property 1) achieves exponential frequency
    synchronization; and
    
  \item {\bf exponential convergence to equilibria:} each trajectory of the grounded Kuramoto model satisfying the
    well-posedness property 2) converges exponentially to an equilibrium
    point.
  \end{enumerate}
Finally, every trajectory of the grounded Kuramoto model as in 4) satisfying the well-posedness property 2) with $\gamma \in {[0,\pi/2 - \subscr{\varphi}{max}]}$ converges to an exponentially stable equilibrium point.
\end{lemma}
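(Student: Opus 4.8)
The plan is to prove the statement in two stages: first that the trajectory $\delta(\cdot)$ converges to some equilibrium $\delta^{*}$ of \eqref{eq: grounded Kuramoto model}, and then that $\delta^{*}$ is exponentially stable. The entire argument rests on one structural fact. Under well-posedness with $\gamma \in [0,\pi/2 - \subscr{\varphi}{max}]$, the trajectory remains in $\subscr{\Delta}{grnd}(\gamma)$, whose closure is compact and convex; on this set every coupling argument $\delta_i - \delta_j + \varphi_{ij}$ (with the convention $\delta_n = 0$) has magnitude strictly below $\gamma + \subscr{\varphi}{max} \leq \pi/2$, so that all the cosines $\cos(\delta_i - \delta_j + \varphi_{ij})$ are strictly positive along the trajectory. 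This positivity is exactly what turns the linearizations below into bona fide (connected) Laplacians.

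For convergence I would establish exponential frequency synchronization of the underlying non-uniform Kuramoto model \eqref{eq: Non-uniform Kuramoto model} and then transfer it through the equivalence of statements~3) and~4). Differentiating \eqref{eq: Non-uniform Kuramoto model} in time shows that the frequencies $y_i := \dot\theta_i$ obey $D_i \dot y_i = -\sum_{j=1}^{n} P_{ij}\cos(\theta_i - \theta_j + \varphi_{ij})(y_i - y_j)$, which is a consensus protocol of the form $\dot y = -D^{-1}L(t)\,y$ for the state-dependent Laplacian $L(t)$ with strictly positive weights $P_{ij}\cos(\theta_i - \theta_j + \varphi_{ij})$. Because the coupling graph is complete, $L(t)$ is connected for all $t$, and since the trajectory stays in a compact subset of $\subscr{\Delta}{grnd}(\gamma)$ its weights are bounded uniformly away from zero; standard consensus estimates then give exponential decay of the frequency differences $y_i - y_j = \dot\delta_i - \dot\delta_j$. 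Hence $\dot\delta(t) \to 0$ exponentially, and integrability of $\dot\delta$ forces $\delta(t) \to \delta^{*}$ with $F(\delta^{*}) = 0$, where $F$ denotes the right-hand side of \eqref{eq: grounded Kuramoto model}. Equivalently, this is exactly the implication from statement~3) to statement~4).

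To show $\delta^{*}$ is exponentially stable I would invoke Lyapunov's indirect method and argue that the Jacobian $J(\delta^{*}) = DF(\delta^{*})$ is Hurwitz. Rather than differentiate \eqref{eq: grounded Kuramoto model} directly, I linearize \eqref{eq: Non-uniform Kuramoto model} at any lift $\theta^{*}$ of $\delta^{*}$, obtaining $\dot{\tilde\theta} = -D^{-1}L(\theta^{*})\,\tilde\theta$, where $L(\theta^{*})$ is the (generally asymmetric) Laplacian with strictly positive off-diagonal weights $P_{ij}\cos(\theta_i^{*} - \theta_j^{*} + \varphi_{ij})$. Every Gershgorin disk of $L(\theta^{*})$ is centered at $L_{ii} > 0$ with radius $\sum_{j\neq i}|L_{ij}| = L_{ii}$, so the spectrum lies in the closed right half-plane; irreducibility of the complete graph makes the zero eigenvalue simple, with eigenvector $\fvec 1_{n}$, and pushes the remaining eigenvalues strictly into the open right half-plane. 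Multiplying by $D^{-1} \succ 0$ preserves the Laplacian structure, so $-D^{-1}L(\theta^{*})$ has a simple zero eigenvalue along $\fvec 1_{n}$ --- the rotational symmetry of \eqref{eq: Non-uniform Kuramoto model} --- and all other eigenvalues in the open left half-plane. Since the grounding map quotients out precisely the $\operatorname{span}(\fvec 1_{n})$ direction and $(-D^{-1}L(\theta^{*}))\fvec 1_{n} = \fvec 0_{n}$, the reduced Jacobian $J(\delta^{*})$ retains exactly the $n-1$ nonzero eigenvalues and is therefore Hurwitz; thus $\delta^{*}$ is exponentially stable.

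The main obstacle, and the reason the lossless gradient-system machinery does not apply, is the asymmetry introduced by the phase shifts $\varphi_{ij}$: both the consensus estimate and the spectral argument must be run for a non-symmetric, diagonally-scaled Laplacian rather than a gradient. The genuinely delicate point is extracting a time-independent lower bound on the effective connectivity of $L(t)$, which requires confining the converging trajectory to a compact subset of the interior of $\subscr{\Delta}{grnd}(\gamma)$ where the cosines do not degenerate --- an issue that becomes sharp exactly at the boundary value $\gamma = \pi/2 - \subscr{\varphi}{max}$, where a limiting argument $|\delta_i - \delta_j + \varphi_{ij}| \to \pi/2$ would drive a weight to zero; the strict inclusion $\delta(t) \in \subscr{\Delta}{grnd}(\gamma)$ together with convergence into the interior are what rule this out. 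The reduction step, by contrast, is routine once one notes that $-D^{-1}L(\theta^{*})$ annihilates $\fvec 1_{n}$.
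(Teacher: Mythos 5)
Your proposal addresses only the last of the lemma's three assertions. The equivalence of 1) and 2) together with the identity $\delta(t)=\groundedphases(\theta(t))$ is never argued (in the paper this requires the smoothness of $\groundedphases$ on $\Delta(\gamma)$ for $\gamma<\pi$ and an existence--uniqueness argument to reconstruct $\theta(t)$ from $\delta(t)$), and you explicitly \emph{invoke} the equivalence of 3) and 4) --- ``transfer it through the equivalence of statements~3) and~4)'' --- although that equivalence is itself part of the statement to be proven. Its direction 4)$\Rightarrow$3) is not free: exponential convergence of $\delta(t)$ to an equilibrium gives $\dot\delta(t)\to 0$, but not by itself \emph{exponential} decay of the frequency differences; the paper closes this using analyticity of the right-hand side of \eqref{eq: grounded Kuramoto model}. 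Moreover, your Stage~1 is simultaneously unnecessary and defective: unnecessary because in the final claim convergence is hypothesized (the trajectory is ``as in 4)''), so only exponential stability of the limit needs proof; defective because at the endpoint $\gamma=\pi/2-\subscr{\varphi}{max}$ the weights $\cos(\delta_i-\delta_j+\varphi_{ij})$ are \emph{not} uniformly bounded away from zero on $\subscr{\Delta}{grnd}(\gamma)$ --- they are positive pointwise along the trajectory, but their infimum over the closure is $\cos(\gamma+\subscr{\varphi}{max})=0$ --- so the ``standard consensus estimates'' you cite lack the uniform connectivity bound they need. A related unproven step infects Stage~2: you need the limit $\delta^{*}$ to lie where all cosines are strictly positive, and you assert ``convergence into the interior'' without argument, whereas the limit of a trajectory confined to an open set may well lie on its boundary. (To be fair, the paper is equally brisk on this last point, asserting $\theta^{*}\in\Delta(\pi/2-\subscr{\varphi}{max})$ without further justification.)

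Where your proposal does engage the final claim, it is sound and takes a genuinely different route from the paper. The paper deduces that the Jacobian, the Laplacian with weights $a_{ij}(\theta^{*})=(P_{ij}/D_{i})\cos(\theta^{*}_{i}-\theta^{*}_{j}+\varphi_{ij})$, has $n-1$ stable eigenvalues and a simple zero eigenvalue with eigenspace $\spano{\fvec 1_{n}}$ by invoking Moreau's contraction property for the linearized consensus dynamics; you obtain the same spectral picture by elementary linear algebra --- Gershgorin disks centered at positive diagonal entries touch the imaginary axis only at the origin, and irreducibility of the zero-row-sum matrix makes the zero eigenvalue simple --- followed by an explicit quotient argument showing the grounded Jacobian is similar to the restriction of $-D^{-1}L(\theta^{*})$ to the sum of its nonzero generalized eigenspaces, hence Hurwitz. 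That replacement is self-contained and arguably cleaner than citing the contraction property, and your quotient step makes precise what the paper leaves implicit in passing from the equilibrium manifold of \eqref{eq: Non-uniform Kuramoto model} to the equilibrium point of \eqref{eq: grounded Kuramoto model}. But as a proof of the lemma as stated, the missing equivalences 1)$\Leftrightarrow$2) and 3)$\Leftrightarrow$4), and the unestablished interiority of $\delta^{*}$, constitute genuine gaps.
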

\begin{proof}  
Note that, since both vector fields \eqref{eq: Non-uniform Kuramoto
      model} and \eqref{eq: grounded Kuramoto model} are locally Lipschitz,
    existence and uniqueness of the corresponding solutions follows provided that the corresponding
    evolutions are compact. Now, assume that 1) holds, that is, $\theta(t) \in \Delta(\gamma)$ (compact) for all $t
  \geq 0$.  Therefore, $\bar\delta(t)=\groundedphases(\theta(t)) \in
  \subscr{\Delta}{grnd}(\gamma)$ for all\,$t \geq 0$.  Also
  recall that the map $\groundedphases$ is smooth over $\Delta(\gamma)$.
  These facts and the definition of the grounded Kuramoto model~\eqref{eq:
    grounded Kuramoto model} imply that
  $\frac{d}{dt}\groundedphases(\theta(t))$ is well defined and identical to
  $\dot{{\delta}}(t)$ for all $t\geq0$. In turn, this implies that
  ${\delta}(t)=\groundedphases(\theta(t)) \in \subscr{\Delta}{grnd}(\gamma)$ holds for all positive
  times.
  
    Conversely, assume that 2) holds, that is, $\delta(t) \in
    \subscr{\Delta}{grnd}(\gamma)$ (compact) for all $t
  \geq 0$.
 %
    Due to existence and uniqueness and since initially $\delta(0)=\groundedphases(\theta(0))$ with $\theta(0) \in \Delta(\gamma)$, a set of angles $\theta(t) \in \Delta(\gamma)$ can be
    associated to $\delta(t) \in \subscr{\Delta}{grnd}(\gamma)$ such that $\delta(t) = \groundedphases(\theta(t))$ for all $t \geq 0$. 
 By construction of the grounded Kuramoto model~\eqref{eq:
    grounded Kuramoto model}, we have that $\theta(t)$ is identical to the solution to the non-uniform Kuramoto model \eqref{eq: Non-uniform Kuramoto model} for all $t \geq 0$. Thus, statement 2) implies statement 1) and $\delta(t) = \groundedphases(\theta(t))$ for all $t \geq 0$.
    Having established the equivalence of 1) and 2), we do not further distinguish between ${\delta}(t)$ and $\groundedphases(\theta(t))$.

  Assume that 3) holds, that is, all $\dot \theta_{i}(t)$ converge
  exponentially fast to some $\dot \theta_{\infty} \in \mbb R$. It follows
  that\,\,each $\dot \delta_{i}(t) = \dot \theta_{i}(t) - \dot
  \theta_{n}(t)$ converges exponentially fast to zero, and $\delta(t) =
  \delta(0) + \int_{0}^{t} \dot{\delta}(\tau) d\tau$ converges
  exponentially fast to some $\delta_{\infty} \in \subscr{\Delta}{grnd}(\gamma)$ due to property 2). Since the vector
  field \eqref{eq: grounded Kuramoto model} is continuous and
  $\lim\limits_{t \to \infty} \bigl( \delta(t), \dot \delta(t) \bigr) =
  (\delta_{\infty}, \fvec 0)$, the vector $\delta_{\infty}$ is necessarily
  an equilibrium of \eqref{eq: grounded Kuramoto model}, and property 4)\,\,follows.
  
  Assume that 4) holds, that is, all angular differences $\delta_{i}(t) =
  \theta_{i}(t) - \theta_{n}(t)$ converge exponentially fast to constant
  values $\delta_{i,\, \infty}$ for $i \in \until{n-1}$. This fact and the
  continuity of the vector field in equation~\eqref{eq: grounded Kuramoto
    model} imply that the array with entries $\delta_{i,\, \infty}$ is an
  equilibrium for \eqref{eq: grounded Kuramoto model} and that each
  frequency difference $\dot\delta_{i}(t) = \dot \theta_{i}(t) - \dot \theta_{n}(t)$ converges to zero. %
  Moreover, because the vector field in equation~\eqref{eq: grounded
    Kuramoto model} is analytic and the solution converges exponentially
  fast to an equilibrium point, the right-hand side of
  equation~\eqref{eq: grounded Kuramoto model} converges exponentially fast
  to zero and thus also the time-derivative of the solution, i.e., the
  array of all frequency differences, converges exponentially fast to zero.

To prove the final statement, assume that the non-uniform Kuramoto model \eqref{eq: Non-uniform Kuramoto model}  achieves frequency synchronization with synchronization frequency $\subscr{\dot\theta}{sync} \in \mbb R^{1}$ and phase cohesiveness in $\Delta(\pi/2 - \subscr{\varphi}{max})$. Thus, when formulated in a rotating coordinate frame with zero synchronization frequency, all trajectories $\theta_{i}(t) - \subscr{\dot\theta}{sync} \cdot t \pmod{2\pi}$ necessarily converge to an equilibrium $\theta^{*} \in \Delta(\pi/2 - \subscr{\varphi}{max})$. 

The Jacobian $J(\theta^{*})$ of the non-uniform Kuramoto model is given by the Laplacian matrix with weights $a_{ij}(\theta^{*}) = (P_{ij}/D_{i}) \cos(\theta^{*}_{i}-\theta^{*}_{j} + \varphi_{ij})$. For any $\theta^{*}\in \Delta(\pi/2 - \subscr{\varphi}{max})$, the weights $a_{ij}(\theta^{*})$ are strictly positive. In this case, it follows from the {\it contraction property} \cite[Theorem 1]{LM:04-arxiv} that the linearized dynamics $\dot \theta = J(\theta^{*}) \cdot \theta$ converge from any initial condition in $\mbb R^{n}$ to a point in the diagonal vector space $\fvec 1_{n}$. Hence, for any $\theta^{*}\in \Delta(\pi/2 - \subscr{\varphi}{max})$, the matrix $J(\theta^{*})$ has $n-1$ stable eigenvalues and one zero eigenvalue with eigenspace $\fvec 1_{n}$ corresponding to the translational invariance of the angular variable. 

Hence, any {\em equilibrium manifold} $\theta^{*} \in \Delta(\pi/2 - \subscr{\varphi}{max})$ (of dimension one due to translational invariance) is exponentially stable w.r.t. to the non-uniform Kuramoto dynamics \eqref{eq: Non-uniform Kuramoto model}. The corresponding {\em point} $\delta^{*} = \groundedphases(\theta^{*}(t)) \in \subscr{\Delta}{grnd}(\pi/2 - \subscr{\varphi}{max})$ (the translational symmetry is removed) is an equilibrium of the grounded Kuramoto dynamics \eqref{eq: grounded Kuramoto model} (due to property 4)). Finally, since  
$\theta^{*}$ is exponentially stable, it necessarily follows that $\delta^{*}$ is an exponentially stable equilibrium point.
\end{proof}

As mentioned in Remark \ref{Remark: second-to-first}, system \eqref{eq: Non-uniform Kuramoto model} may be
seen a long-time approximation of \eqref{eq: Classical model}, or spoken differently, it is the reduced system obtained by a singular perturbation
analysis. A physically reasonable singular perturbation parameter is the worst-case choice of $M_{i}/D_{i}$, that is,
$
\epsilon
:=
\subscr{M}{max}/ \subscr{D}{min}
$.
The dimension of $\epsilon$ is in seconds, which makes sense since time still has to be normalized with respect to $\epsilon$. If we reformulate the power network model \eqref{eq: Classical model} in grounded angular coordinates with the state $(\delta,\dot\theta) \in \mbb R^{n-1} \times \mbb R^{n}$, then we obtain the following system in singular perturbation standard form
\begin{align}
	\dt \delta_{i}
	=&
	f_{i}(\dot\theta)
	:=
	\dot \theta_{i} - \dot \theta_{n}
	\,,
	\quad i \in \until{n-1}
	\,,
	\label{eq: Power Grid - SP - standard model 1} \\
	\epsilon \dt \dot \theta_{i}
	=&
	g_{i}(\delta,\dot \theta)
	:=
	- F_{i} \, \dot \theta_{i}
	+ \frac{F_{i}}{D_{i}}\, \Bigl( \omega_{i} - \sum\nolimits_{j=1}^{n} P_{ij} \sin(\delta_{i} - \delta_{j} + \varphi_{ij}) \Bigr)
	\,,
	\quad i \in \until n
	\,,
	\label{eq: Power Grid - SP - standard model 2}
\end{align}
where
$
F_{i}
:= 
(D_{i}/\subscr{D}{min})/(M_{i}/\subscr{M}{max})
$ and $\delta_{n}:=0$ in equation \eqref{eq: Power Grid - SP - standard model 2}.
For $\epsilon$ sufficiently small, the long-term dynamics of \eqref{eq: Power Grid - SP - standard model 1}-\eqref{eq: Power Grid - SP - standard model 2} can be approximated by the grounded Kuramoto model \eqref{eq: grounded Kuramoto model} and the power flow \eqref{eq: Classical model - power flow}, where the approximation error is of order $\epsilon$ and $F_{i}$ determines its convergence rate in the fast\,\,time-scale.

\begin{theorem}
\label{Theorem: singular perturbations}
{\bf (Singular Perturbation Approximation)}
Consider the power network model \eqref{eq: Classical model} written as the singular perturbation problem \eqref{eq: Power Grid - SP - standard model 1}-\eqref{eq: Power Grid - SP - standard model 2} with bounded initial conditions $(\delta(0),\dot\theta(0))$, and the grounded non-uniform Kuramoto model \eqref{eq: grounded Kuramoto model} with initial condition $\delta(0)$ and solution $\bar \delta(t)$. Assume that there exists an exponentially stable fixed point $\delta_{\infty}$ of \eqref{eq: grounded Kuramoto model} and $\delta(0)$ is in a compact subset $\Omega_{\delta}$ of its region of attraction. 
Then, for each\,\,$\Omega_{\delta}$

\begin{enumerate}

	\item there exists $\epsilon_{*}>0$ such that for all $\epsilon < \epsilon_{*}$, the singular perturbation problem \eqref{eq: Power Grid - SP - standard model 1}-\eqref{eq: Power Grid - SP - standard model 2} has a unique solution $(\delta(t,\epsilon),\dot{\theta}(t,\epsilon))$ for $t \geq 0$, and for all $t \geq 0$ it holds uniformly in $t$ that
\begin{equation}
	\delta(t,\epsilon) - \bar \delta(t) 
	= 
	\mc O(\epsilon), \quad \text{and} \quad
	\dot{\theta}(t,\epsilon) - h(\bar \delta(t)) - y(t/\epsilon) 
	= 
	\mc O(\epsilon)
	\label{eq: singular perturbation error 1}
	\,,
\end{equation}
where 
$
y_{i}(t/\epsilon)
:=
(\dot{\theta}_{i}(0)-h_{i}(\delta(0))) \, e^{-F_{i} t/\epsilon}
$
and 
$h_{i}(\delta) := Q_{i}(\delta) / D_{i}$
for $i \in \until n$.

	\item For any $t_{b} \!>\! 0 $, there exists $\epsilon^{*} \!\leq\! \epsilon_{*}$ such that for all $t \!\geq\! t_{b}$ and whenever $\epsilon \!<\! \epsilon^{*}$ it holds uniformly\,\,that
\begin{equation}
	\dot{\theta}(t,\epsilon) - h(\bar \delta(t)) = \mc O(\epsilon)
	\label{eq: singular perturbation error 2}
	\,.
\end{equation}

	\item Additionally, there exist $\epsilon$ and  $\subscr{\varphi}{max}$ sufficiently small such that the approximation errors \eqref{eq: singular perturbation error 1}-\eqref{eq: singular perturbation error 2} converge exponentially to zero as $t \to \infty$.
\end{enumerate}
\end{theorem}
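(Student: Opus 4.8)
The plan is to recognize \eqref{eq: Power Grid - SP - standard model 1}--\eqref{eq: Power Grid - SP - standard model 2} as a singular perturbation problem in standard form and to invoke Tikhonov's theorem \cite{HKK:02}, with the grounded Kuramoto model \eqref{eq: grounded Kuramoto model} supplying the reduced (slow) dynamics. First I would observe that the fast equation \eqref{eq: Power Grid - SP - standard model 2} is affine in the fast variable: since $g_i(\delta,\dot\theta) = -F_i\dot\theta_i + (F_i/D_i)Q_i(\delta)$, the quasi-steady-state equation $g(\delta,\dot\theta)=0$ has the unique, isolated, and smooth root $\dot\theta = h(\delta)$ with $h_i(\delta) = Q_i(\delta)/D_i$. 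Substituting this root into \eqref{eq: Power Grid - SP - standard model 1} gives $\dot\delta_i = h_i(\delta)-h_n(\delta)$, which is precisely the grounded Kuramoto model \eqref{eq: grounded Kuramoto model}; hence its solution $\bar\delta(t)$ is the reduced trajectory, and the standing hypotheses on $\delta_\infty$ and $\Omega_\delta$ furnish the exponential stability required on the slow time-scale.

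Next I would analyze the boundary layer in the error coordinate $z = \dot\theta - h(\delta)$ and the stretched time $\tau = t/\epsilon$. Freezing $\delta$, the affine structure together with $g(\delta,h(\delta))=0$ collapses the boundary-layer dynamics to the linear, decoupled system $\mathrm{d}z_i/\mathrm{d}\tau = -F_i z_i$. Since $F_i>0$ for every $i$, the origin is globally exponentially stable uniformly in $\delta$, and its solution $z_i(\tau) = z_i(0)\,e^{-F_i \tau}$ reproduces exactly the term $y_i(t/\epsilon)$ with $z_i(0) = \dot\theta_i(0) - h_i(\delta(0))$. With the reduced equilibrium exponentially stable near $\delta_\infty$, the boundary layer exponentially stable, and all data smooth on the compact $\Omega_\delta$, Tikhonov's theorem on the infinite interval yields statement 1), namely existence for $t\ge 0$ and the uniform $\mc O(\epsilon)$ bounds \eqref{eq: singular perturbation error 1}. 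Statement 2) then follows because for $t\ge t_b$ the fast transient obeys $\|y(t/\epsilon)\| \le \|z(0)\|\, e^{-\min_i F_i\, t_b/\epsilon}$, which is itself $\mc O(\epsilon)$ once $\epsilon$ is small enough, so $y(t/\epsilon)$ may be absorbed into the error.

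For statement 3) I would argue that the full and reduced trajectories converge to the \emph{same} point. The key algebraic observation is that the equilibria of \eqref{eq: Power Grid - SP - standard model 1}--\eqref{eq: Power Grid - SP - standard model 2} correspond exactly to those of the grounded model: an equilibrium forces $\dot\theta_i = \dot\theta_n$ (from $\dot\delta=0$) together with $\dot\theta_i = h_i(\delta)$ (from $g=0$), hence $h_i(\delta)=h_n(\delta)$ for all $i$, which is precisely the equilibrium condition of \eqref{eq: grounded Kuramoto model}. Thus the reduced equilibrium $\delta_\infty$ lifts to the exact equilibrium $(\delta_\infty, h(\delta_\infty))$ of the full system, with $h(\delta_\infty) = \dot\theta_\infty \fvec 1_n$ the synchronized frequency. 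Invoking the singular-perturbation stability theorem \cite{HKK:02}, for $\epsilon$ sufficiently small this equilibrium inherits exponential stability from the exponentially stable reduced equilibrium and boundary layer; consequently $\delta(t,\epsilon)\to\delta_\infty$ and $\dot\theta(t,\epsilon)\to h(\delta_\infty)$ exponentially. Since $\bar\delta(t)\to\delta_\infty$, $h(\bar\delta(t))\to h(\delta_\infty)$, and $y(t/\epsilon)\to 0$ all exponentially, every term in \eqref{eq: singular perturbation error 1}--\eqref{eq: singular perturbation error 2} vanishes exponentially. The smallness of $\subscr{\varphi}{max}$ enters here to guarantee, through the final statement of Lemma \ref{Lemma: Properties of grounded Kuramoto model}, that the common equilibrium is exponentially stable (it lies in $\Delta(\pi/2-\subscr{\varphi}{max})$), which is exactly the hypothesis under which this stability is inherited by the full system.

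I expect the main obstacle to be statement 3): Tikhonov's theorem alone delivers only uniform $\mc O(\epsilon)$ \emph{closeness}, never genuine convergence of the error to zero. Upgrading closeness to exponential convergence requires the two separate ingredients above, namely the exactness of the equilibrium correspondence between the full and reduced systems and the exponential stability of the common equilibrium for the full system, neither of which is supplied by the standard approximation estimates. Verifying that the lifted point $(\delta_\infty, h(\delta_\infty))$ is an exact equilibrium and that it remains exponentially stable for small $\epsilon$ (with $\subscr{\varphi}{max}$ small securing the reduced-side stability) is the delicate step; the remaining slow/fast decomposition and the boundary-layer estimate are routine.
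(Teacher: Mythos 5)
Your proposal is correct, and for statements 1) and 2) it is essentially the paper's own argument: the same identification of the quasi-steady state $h_{i}(\delta)=Q_{i}(\delta)/D_{i}$ as the unique isolated root of $g=0$, the same recognition that substituting $h$ into \eqref{eq: Power Grid - SP - standard model 1} reproduces the grounded Kuramoto model \eqref{eq: grounded Kuramoto model}, the same linear decoupled boundary layer $\mathrm{d}y_{i}/\mathrm{d}\tau=-F_{i}y_{i}$, and the same appeal to Tikhonov's theorem \cite[Theorem 11.2]{HKK:02}, with statement 2) following because $e^{-F_{i}t_{b}/\epsilon}$ is itself $\mc O(\epsilon)$. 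For statement 3), however, you take a genuinely different route. The paper does not stay inside singular perturbation theory at this point: it notes that the trajectory enters an $\mc O(\epsilon)$ neighborhood of $(\delta_{\infty},h(\bar\delta_{\infty}))$ and then invokes the topological-equivalence machinery of classic transient stability analysis --- \cite[Theorem 5.1]{HDC-CCC:95} relating hyperbolic equilibria of the reduced gradient system and the second-order system for lossless networks (together with \cite[Theorem 3.1]{HDC-FFW-PPV:88}), extended to small phase shifts via structural stability \cite[Theorem 5.7]{HDC-CCC:95} --- to conclude that $(\delta_{\infty},\fvec 0)$ is locally exponentially stable for the full second-order dynamics; this is why $\subscr{\varphi}{max}$ must be small in the paper's proof. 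You instead exploit the exact equilibrium correspondence ($\dot\delta=0$ and $g=0$ force $h_{i}(\delta)=h_{n}(\delta)$ for all $i$) to lift $\delta_{\infty}$ to the exact equilibrium $(\delta_{\infty},h(\delta_{\infty}))$ of \eqref{eq: Power Grid - SP - standard model 1}--\eqref{eq: Power Grid - SP - standard model 2} and then apply the standard singular-perturbation stability theorem \cite[Theorem 11.4]{HKK:02} to inherit its exponential stability for $\epsilon$ sufficiently small. Each approach buys something: yours is self-contained within \cite{HKK:02}, treats a nonzero synchronized frequency cleanly (the paper's assertion $h(\bar\delta_{\infty})=\fvec 0$ tacitly presumes a rotating frame with zero synchronization frequency, which your $h(\delta_{\infty})=\dot\theta_{\infty}\fvec 1_{n}$ avoids), and in fact never needs $\subscr{\varphi}{max}$ small, since exponential stability of $\delta_{\infty}$ is already a standing hypothesis of the theorem --- so your appeal to the last statement of Lemma \ref{Lemma: Properties of grounded Kuramoto model} is superfluous and you prove a marginally stronger claim; the paper's route, by contrast, certifies exponential stability of the lifted equilibrium for the second-order system for \emph{any} inertia/damping ratio, with $\epsilon$-smallness entering only through the Tikhonov closeness estimate, and this is what underpins the paper's separate discussion that its conclusions hold locally independently of $\epsilon$. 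One step you leave implicit (as does the paper) is that the basin of attraction of the full-system equilibrium is uniform in $\epsilon$ for $\epsilon<\epsilon^{*}$ --- guaranteed by the composite Lyapunov construction behind \cite[Theorem 11.4]{HKK:02} --- which is needed so that the $\mc O(\epsilon)$ neighborhood reached via statement 1) actually lies inside that basin; this is standard and does not affect correctness.
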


\begin{proof}
To prove statements 1) and 2) of Theorem \ref{Theorem: singular perturbations} we will follow {\it Tikhonov's theorem} \cite[Theorem 11.2]{HKK:02} and show that the singularly perturbed system \eqref{eq: Power Grid - SP - standard model 1}-\eqref{eq: Power Grid - SP - standard model 2} satisfies all assumptions of \cite[Theorem 11.2]{HKK:02} when analyzing it on $\mbb R^{n-1} \times \mbb R^{n}$ and after translating the arising fixed point to the origin.

{\it Exponential stability of the reduced system:} The {\it quasi-steady-state} of \eqref{eq: Power Grid - SP - standard model 1}-\eqref{eq: Power Grid - SP - standard model 2} is obtained by solving $g_{i}(\delta,\dot\theta) = 0$ for $\dot \theta$, resulting in the unique (and thus isolated) root $\dot \theta_{i} = h_{i}(\delta) = Q_{i}(\delta)/D_{i}$, $i \in \until n$. The {\it reduced system} is obtained as $\dot \delta_{i} = f_{i}(h(\delta))=h_{i}(\delta) - h_{n}(\delta)$, $i \in \until{n-1}$, which is equivalent to the grounded non-uniform Kuramoto model \eqref{eq: grounded Kuramoto model}. The reduced system is smooth, evolves on $\mbb R^{n-1}$, and by assumption  its solution $\bar\delta(t)$ is bounded and converges exponentially to the stable fixed point $\delta_{\infty}$. Define the error coordinates $x_{i}(t) = \bar\delta_{i}(t) - \delta_{i,\infty}$, $i \in \until{n-1}$ and the resulting system $\dot x = f(h(x+\delta_{\infty}))$ with state in $\mbb R^{n-1}$ and initial condition $x(0) = \delta(0) - \delta_{\infty}$. By assumption, the solution $x(t)$ is bounded and converges exponentially to the stable fixed point at $x = \fvec 0$. 

{\it Exponential stability of the boundary layer system:} Consider the error coordinate $y_{i}=\dot \theta_{i}-h_{i}(\delta)$, which shifts the error made by the quasi-stationarity assumption $\dot\theta_{i}(t) \approx h_{i}(\delta(t))$ to the origin. After stretching time to the dimensionless variable $\tau = t/\epsilon$, the quasi-steady-state error is
\begin{equation}
	\frac{d}{d\,\tau} \, y_{i}
	=
	g_{i}(\delta,y+h(\delta)) - \epsilon \dx{h_{i}}{\delta}{}\, f(y+h(\delta))
	= 
	- F_{i} \, y_{i} - \epsilon \dx{h}{\delta}{}\, f_{i}(y+h(\delta))
	\label{eq: Power Grid - SP - quasi-steady-state dynamics}
\end{equation}
with $y_{i}(0) = \dot\theta_{i}(0)-h_{i}(\delta(0))$. By setting $\epsilon = 0$,
\eqref{eq: Power Grid - SP - quasi-steady-state dynamics} reduces to the
{\it boundary layer model}
\begin{equation}
	\frac{d}{d\,\tau} \, y_{i}
	=
	- F \,y_{i}
	\,,\quad
	y_{i}(0) = \dot \theta_{i}(0)-h_{i}(\delta(0))
	\label{eq: Power Grid - SP - boundary model}
	\,.
\end{equation}
The boundary layer model \eqref{eq: Power Grid - SP - boundary model} is globally exponentially stable 
with solution $y_{i}(t/\epsilon) = y_{i}(0) e^{-F_{i} t/\epsilon}$, where $y_{i}(0)$ is in a compact subset of the region of attraction of \eqref{eq: Power Grid - SP - boundary model} due to boundedness of $(\delta(0),\dot{\theta}(0))$.
  
In summary, the singularly perturbed system \eqref{eq: Power Grid - SP - standard model 1}-\eqref{eq: Power Grid - SP - standard model 2} is smooth on $\mbb R^{n-1} \times \mbb R^{n}$, and the origins of the reduced system (in error coordinates) $\dot x = f(h(x+\delta_{\infty}))$ and the boundary layer model \eqref{eq: Power Grid - SP - boundary model} are exponentially stable (where Lyapunov functions are readily existent by converse arguments \cite[Theorem 4.14]{HKK:02}). Thus, all assumptions of \cite[Theorem 11.2]{HKK:02} are satisfied and statements 1) and 2) follow.

To prove statement 3), note that $\bar \delta(t)$ converges to an exponentially stable equilibrium point $\delta_{\infty}$, and $(\delta(t,\epsilon),\dot \theta(t,\epsilon))$ converges to an $\mc O(\epsilon)$ neighborhood of $\bigl( \delta_{\infty},h(\bar \delta_{\infty}) \bigl)$, where $h(\bar \delta_{\infty}) = \fvec 0$. We now invoke classic topological equivalence arguments from the transient stability literature \cite{HDC-FFW-PPV:88,HDC-CCC:95}. Both the second order system \eqref{eq: Power Grid - SP - standard model 1}-\eqref{eq: Power Grid - SP - standard model 2} as well as the reduced system $\dot \delta = f(h(\delta))$ correspond to the perturbed Hamiltonian system (8)-(9) in \cite{HDC-CCC:95} and the perturbed gradient system (10) in \cite{HDC-CCC:95}, where the latter is considered in \cite{HDC-CCC:95} with $D_{i} = 1$ for all $i$. Consider for a moment the case when all $\varphi_{ij} = 0$. In this case, the reduced system has a locally exponentially stable fixed point $\delta_{\infty}$ (for any $D_{i} > 0$ due to \cite[Theorem 3.1]{HDC-FFW-PPV:88}), and by \cite[Theorem 5.1]{HDC-CCC:95} we conclude that $(\delta_{\infty},\fvec 0)$ is also a locally exponentially stable fixed point of the second order system \eqref{eq: Power Grid - SP - standard model 1}-\eqref{eq: Power Grid - SP - standard model 2}. Furthermore, due to structural stability \cite[Theorem 5.7, R1]{HDC-CCC:95}, this conclusion holds also for sufficiently small phase shifts $\varphi_{ij}$. Thus, for sufficiently small $\epsilon$ and $\subscr{\varphi}{max}$, the solution of \eqref{eq: Power Grid - SP - standard model 1}-\eqref{eq: Power Grid - SP - standard model 2} converges exponentially to $(\delta_{\infty},\fvec 0)$. Consequently, the approximation errors $\delta(t,\epsilon) - \bar \delta(t)$ and $\dot \theta(t,\epsilon)-h(\bar \delta)$ as well as the boundary layer error $y(t/\epsilon)$  converge exponentially to zero. 
\end{proof}

Theorem \ref{Theorem: singular perturbations} can be interpreted geometrically as follows. The frequency dynamics of system \eqref{eq: Classical model}\,\,happen on a fast time-scale and converge exponentially to a {\it slow manifold} which can be approximated to first order by the scaled power flow $D^{-1} Q(\theta)$.
On this slow manifold the long-term phase synchronization dynamics of system \eqref{eq: Classical model} are given by the non-uniform Kuramoto model \eqref{eq: Non-uniform Kuramoto model}.


\section{Synchronization of Non-uniform Kuramoto Oscillators}
\label{Section: Synchronization of Non-uniform Kuramoto System}

This section combines and extends methods from the consensus and Kuramoto literature to analyze the
non-uniform Kuramoto model \eqref{eq: Non-uniform Kuramoto model}.  The role of the time constants $D_{i}$ and the phase shifts $\varphi_{ij}$ is immediately revealed when dividing by $D_{i}$ both hand sides of \eqref{eq: Non-uniform Kuramoto model} and expanding the right-hand side as
\begin{equation}
	\dot \theta_{i}
	=
  	\frac{\omega_{i}}{D_{i}}  - \sum_{j=1, j \neq i}^{n}\! \bigg(
  	\frac{P_{ij}}{D_{i}} \cos(\varphi_{ij}) \sin(\theta_{i} - \theta_{j})
  	+
  	\frac{P_{ij}}{D_{i}} \sin(\varphi_{ij}) \cos(\theta_{i} - \theta_{j})
  	\bigg)
	\label{eq: non-uniform Kuramoto model - expanded}
  	\,.
\end{equation}
The difficulties in the analysis of system \eqref{eq: Non-uniform Kuramoto model} are the phase shift-induced lossy coupling $(P_{ij} / D_{i}) \sin(\varphi_{ij})$ $\times \cos(\theta_{i} - \theta_{j})$ inhibiting synchronization and the non-symmetric coupling between an oscillator pair $\{i,j\}$ via $P_{ij}/D_{i}$ on the one hand and $P_{ij}/D_{j}$ on the other. Since the non-uniform Kuramoto model \eqref{eq: Non-uniform Kuramoto model} is
derived from the power network model \eqref{eq: Classical model}, the
underlying graph induced by $P$ is {\it complete}
and {\it symmetric}, i.e., except for the diagonal entries, the matrix $P$
is fully populated and symmetric. For the sake of generality, this section considers the non-uniform Kuramoto
model \eqref{eq: Non-uniform Kuramoto model} under the assumption that the
graph induced by $P$ is neither complete nor symmetric, that is, some
$P_{ij}$ are zero and $P \neq P^{T}$.

\subsection{Frequency Synchronization of Phase-Cohesive Oscillators}\label{Subsection: Exponential Synchronization of Phase-Cohesive Oscillators}

Under the assumption of cohesive phases, the classic Kuramoto model \eqref{eq: Kuramoto system} achieves frequency synchronization \cite[Theorem 3.1]{NC-MWS:08}, \cite[Corollary 11]{GSS-UM-FA:09}. An analogous result guarantees frequency synchronization of non-uniform Kuramoto oscillators \eqref{eq: Non-uniform Kuramoto model} whenever the graph induced by $P$ has a globally reachable node.

\begin{theorem}
\label{Theorem: Frequency synchronization}
{\bf(Frequency synchronization)}
Consider the non-uniform Kuramoto model \eqref{eq: Non-uniform Kuramoto
  model} where the graph induced by $P$ has a globally reachable
node. 
Assume that there exists $\gamma \in [0,\pi/2 - \subscr{\varphi}{max}[$ 
such that the (non-empty) set of bounded phase differences
$\bar\Delta(\gamma)$ is positively invariant. Then for every $\theta(0) \in \bar\Delta(\gamma)$,

\begin{enumerate}

	\item the frequencies $\dot{\theta}_{i}(t)$ synchronize exponentially to some
frequency $\dot{\theta}_{\infty} \in [\subscr{\dot{\theta}}{min}(0) ,
\subscr{\dot{\theta}}{max}(0)]$; and

	\item if $\subscr{\varphi}{max} = 0$ and $P=P^{T}$, then  
$\dot \theta_{\infty} = \Omega := \sum\nolimits_{i} \omega_{i} / \sum_{i} D_{i}$ and the exponential synchronization rate is no worse than
\begin{equation}
	\!\!\!
	\subscr{\lambda}{fe} 
	=\!
	- \lambda_{2}(L(P_{ij}))  \cos(\gamma) \cos(\angle(D\fvec 1,\fvec 1))^{2} \!/ \subscr{D}{max}
	\label{eq: rate lambda_fe}
	.
\end{equation}

\end{enumerate}

\end{theorem}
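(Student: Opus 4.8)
The plan is to differentiate the dynamics so that the \emph{frequencies} themselves obey a consensus protocol. Differentiating the non-uniform Kuramoto model~\eqref{eq: Non-uniform Kuramoto model} along its (smooth) solutions yields the linear time-varying system $\frac{d}{dt}\dot\theta = -D^{-1}L(a_{ij}(t))\,\dot\theta$, whose state is the frequency vector $\dot\theta$ and whose time-varying weights are $a_{ij}(t)=P_{ij}\cos(\theta_i(t)-\theta_j(t)+\varphi_{ij})$. Because $\bar\Delta(\gamma)$ is positively invariant with $\gamma<\pi/2-\subscr{\varphi}{max}$, every pairwise argument satisfies $|\theta_i(t)-\theta_j(t)+\varphi_{ij}|\le \gamma+\subscr{\varphi}{max}<\pi/2$, so each $a_{ij}(t)$ is strictly positive, uniformly bounded away from zero whenever $P_{ij}>0$, and bounded above by $P_{ij}$. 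Hence the weighted graph carrying $a_{ij}(t)$ has the same sparsity pattern as $P$ --- in particular a globally reachable node --- uniformly in $t$, and $-D^{-1}L(a_{ij}(t))$ is a Metzler matrix with zero row sums and right null vector $\fvec 1$.

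For statement 1) I would invoke the consensus machinery for time-varying directed graphs already used in the proof of Lemma~\ref{Lemma: Properties of grounded Kuramoto model} (the contraction property of \cite[Theorem 1]{LM:04-arxiv}). The key observation is that for such a system $\max_i \dot\theta_i(t)$ is non-increasing and $\min_i \dot\theta_i(t)$ is non-decreasing, so the enclosing interval $[\subscr{\dot\theta}{min}(t),\subscr{\dot\theta}{max}(t)]$ is nested; the uniform positivity of the weights together with the globally reachable node forces this interval to contract exponentially to a single point $\dot\theta_\infty$. Since this point is the limit of a shrinking interval anchored by the initial extremes, $\dot\theta_\infty\in[\subscr{\dot\theta}{min}(0),\subscr{\dot\theta}{max}(0)]$, and all $\dot\theta_i(t)\to\dot\theta_\infty$ exponentially.

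For statement 2) I would first pin down the limit: setting $\subscr{\varphi}{max}=0$ and $P=P^{T}$ and summing~\eqref{eq: Non-uniform Kuramoto model} over $i$ makes the coupling term vanish by oddness of $\sin$ and symmetry of $P$, so $\sum_i D_i\dot\theta_i(t)\equiv\sum_i\omega_i$ is conserved; passing to the synchronized limit of 1) gives $\dot\theta_\infty=\Omega=\sum_i\omega_i/\sum_i D_i$. For the rate, note that with $\varphi_{ij}=0$ and $P=P^{T}$ the Laplacian $L(a_{ij}(t))$ is symmetric. I would introduce the disagreement vector $e:=\dot\theta-\Omega\fvec 1$, which satisfies $e^{T}D\fvec 1=0$ by the conservation law, together with the Lyapunov function $W=\tfrac12\,e^{T}D e$. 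Along trajectories $\dot W = -e^{T} L(a_{ij}(t))\,e\le 0$, and the claimed rate follows from a chain of lower bounds on $e^{T} L(a_{ij}(t))\,e$.

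The main obstacle is this final chain, which must reproduce the three factors of~\eqref{eq: rate lambda_fe} exactly. Phase cohesiveness gives $a_{ij}(t)=P_{ij}\cos(\theta_i-\theta_j)\ge P_{ij}\cos(\gamma)$, hence $L(a_{ij}(t))\succeq\cos(\gamma)\,L(P_{ij})$ in the positive-semidefinite order and $e^{T} L(a_{ij}(t))\,e\ge\cos(\gamma)\,\lambda_2(L(P_{ij}))\,\norm{e_\perp}^{2}$, where $e_\perp$ is the Euclidean projection of $e$ onto $\fvec 1^{\perp}$. The delicate point is that $e$ is $D$-orthogonal, not Euclidean-orthogonal, to $\fvec 1$: the sharpest angle between the hyperplane $(D\fvec 1)^{\perp}$ and $\fvec 1$ is $\pi/2-\angle(D\fvec 1,\fvec 1)$, which yields the tight geometric estimate $\norm{e_\perp}^{2}\ge\cos(\angle(D\fvec 1,\fvec 1))^{2}\,\norm{e}^{2}$ and thereby the factor $\cos(\angle(D\fvec 1,\fvec 1))^{2}$. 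Combining this with $W\le\tfrac12\subscr{D}{max}\norm{e}^{2}$ gives $\dot W\le 2\subscr{\lambda}{fe}W$ with $\subscr{\lambda}{fe}$ as in~\eqref{eq: rate lambda_fe}, so $\norm{e(t)}$ decays at the stated rate. Keeping every inequality tight enough to land on the exact constant, rather than a looser bound, is where the care is required.
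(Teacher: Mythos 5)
Your proposal is correct and follows essentially the same route as the paper's proof: differentiating to obtain the time-varying frequency-consensus system and invoking the contraction property of \cite[Theorem 1]{LM:04-arxiv} for statement 1), then for statement 2) using conservation of $\sum_i D_i\dot\theta_i$ to identify $\Omega$, the weighted disagreement function $\tfrac{1}{2}\,e^{T}De$, the bound $e^{T}L(a_{ij}(t))e \geq \cos(\gamma)\,\lambda_{2}(L(P_{ij}))\norm{e_\perp}^{2}$, and the dihedral-angle estimate $\norm{e_\perp}\geq\cos(\angle(D\fvec 1,\fvec 1))\norm{e}$. The only cosmetic difference is that you obtain the conservation law by summing the model directly rather than via $\fvec 1^{T}\tfrac{d}{dt}D\dot\theta = 0$ on the symmetric consensus dynamics, which changes nothing.
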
  
  
  In the definition of the convergence rate $\subscr{\lambda}{fe}$ in \eqref{eq: rate lambda_fe}, the factor $\lambda_{2}(L(P_{ij}))$ is the algebraic connectivity of the graph induced by $P=P^{T}$, the factor $1 / \subscr{D}{max}$ is the slowest time constant of the non-uniform Kuramoto system \eqref{eq: Non-uniform Kuramoto model}, the proportionality $\subscr{\lambda}{fe} \sim \cos(\gamma)$ reflects the phase cohesiveness in $\bar\Delta(\gamma)$, and the proportionality $\subscr{\lambda}{fe} \sim \cos(\angle(D \fvec 1,\fvec 1))^{2}$ reflects the fact that the error coordinate $\dot{\theta} - \Omega \fvec 1$ is for non-uniform damping terms $D_{i}$ not orthogonal to the {\it agreement vector} $\Omega \fvec 1$. For non-zero phase shifts a small signal analysis of the non-uniform Kuramoto model \eqref{eq: non-uniform Kuramoto model - expanded} reveals that the natural frequency of each oscillator diminishes as $\omega_{i} - \sum_{j \neq i} P_{ij} \sin(\varphi_{ij})$. In this case, and for symmetric coupling $P=P^{T}$, the synchronization frequency $\dot{\theta}_{\infty}$ in statement 1) will be smaller than $\dot{\theta}_{\infty} = \Omega$ in statement 2). When specialized to the classic Kuramoto model \eqref{eq: Kuramoto system}, statement 2) of Theorem \ref{Theorem: Frequency synchronization} reduces\,to\,\cite[Theorem 3.1]{NC-MWS:08}. 

\begin{proof}[Proof of Theorem \ref{Theorem: Frequency synchronization}]
By differentiating the non-uniform Kuramoto model \eqref{eq: Non-uniform Kuramoto model} we obtain the following dynamical system describing the evolution of the frequencies
\begin{equation}
	\dt D_{i} \dot{\theta}_{i}
	=
	- \sum\nolimits_{j=1}^{n} P_{ij} \cos(\theta_{i}-\theta_{j}+\varphi_{ij})\,(\dot{\theta}_{i} - \dot{\theta}_{j})
	\label{eq: dot theta dynamics}
	\,.
\end{equation}
Given the matrix $P$, consider a directed weighted graph $\mc G$ induced by the matrix with elements 
$a_{ij} = (P_{ij}/D_{i}) \cos(\theta_{i}-\theta_{j} + \varphi_{ij})$. 
By assumption we have for every $\theta(0) \in \bar\Delta(\gamma)$ that $\theta(t) \in \bar\Delta(\gamma)$ for all $t \geq 0$. Consequently the weights $a_{ij}(t) = (P_{ij}/D_{i}) \cos(\theta_{i}(t)-\theta_{j}(t)+ \varphi_{ij})$ are non-degenerate, i.e., zero for $P_{ij}=0$ and strictly positive otherwise for all $t \geq 0$. Note also that system \eqref{eq: dot theta dynamics} evolves on the tangent space of $\mbb T^{n}$, that is, the Euclidean space $\mbb R^{n}$. Therefore, the dynamics \eqref{eq: dot theta dynamics} can be analyzed as a linear time-varying consensus protocol for the velocities $\dot{\theta}_{i}$ with state-dependent Laplacian matrix $L(a_{ij})$:
\begin{equation}
	\dt \dot{\theta}
	=
	- L(a_{ij}) \, \dot{\theta}
	\label{eq: LPV concensus system for dot theta}
	\,,
\end{equation}
We analyze $L(a_{ij})$ as if it was a just time-varying Laplacian matrix $L(a_{ij}(t))$. 
At each time instance the matrix $-L(a_{ij}(t))$ is Metzler with zero row
sums, and the weights $a_{ij}(t)$ are bounded continuous functions of time
that induce integrated over any non-zero time interval a graph with
non-degenerate weights and a globally reachable node. 
It follows from the {\it contraction property} \cite[Theorem 1]{LM:04-arxiv}
that $\dot{\theta}_{i}(t) \in [\subscr{\dot{\theta}}{min}(0) ,
\subscr{\dot{\theta}}{max}(0)]$ for all $t \geq 0$ and $\theta_{i}(t)$ converge
exponentially to $\dot{\theta}_{\infty}$. This proves statement 1).\footnote{We remark that in the case of smoothly time-varying natural frequencies $\omega_{i}(t)$ an additional term $\dot \omega(t)$ appears on the right-hand side of the frequency consensus dynamics \eqref{eq: LPV concensus system for dot theta}. If the natural frequencies are non-identical or not exponentially convergent to identical values, the oscillators clearly cannot achieve frequency synchronization and the proof of Theorem \ref{Theorem: Frequency synchronization}\,fails.}

In the case of zero shifts and symmetric coupling $P=P^{T}$ the frequency dynamics \eqref{eq: LPV concensus system for dot theta} can be reformulated as a {\it symmetric} time-varying consensus protocol with multiple rates $D$ as
\begin{equation}
	\dt D \dot{\theta}
	=
	- L(w_{ij}(t)) \, \dot{\theta}
	\label{eq: LPV concensus system for dot theta - trivial phase shifts}
	\,,
\end{equation}
where $L(w_{ij}(t))$ is a symmetric time-varying Laplacian 
corresponding to a connected graph with strictly positive weights $w_{ij}(t) = P_{ij} \cos(\theta_{i}-\theta_{j})$. It follows from statement 1) that the oscillators synchronize exponentially to some frequency $\dot{\theta}_{\infty}$. Since $L(w_{ij})$ is symmetric,
it holds that $ \fvec 1_{n}^{T} \dt D \dot{\theta} = 0 $, or equivalently, $
\sum_{i} D_{i} \ddot{\theta}_{i}(t)$ is a constant conserved quantity. If we
apply this argument again at $\dot{\theta}_{\infty} := \lim_{t \to \infty}
\dot{\theta}_{i}(t)$, then we have $ \sum_{i} D_{i} \dot{\theta}_{i}(t) =
\sum_{i} D_{i} \dot{\theta}_{\infty} $, or equivalently, the frequencies
synchronize exponentially to
$\dot{\theta}_{\infty}=\Omega$. 

In order to derive an explicit synchronization rate, consider the {\it weighted disagreement vector}
$
\delta
=
\dot{\theta} - \Omega \fvec 1_{n}
$, 
as an error coordinate satisfying 
$
\fvec 1_{n}^{T} D \delta 
=
\fvec 1_{n}^{T} D \dot{\theta} - \fvec 1_{n}^{T} D \Omega \fvec 1_{n}
= 
0
$,
that is, $\delta$ lives in the {\it weighted disagreement eigenspace} of
co-dimension $1_{n}$ and with normal vector $D\,\fvec 1_{n}$. Since $\Omega$ is constant\,\,and $\ker(L(w_{ij})) = \mathrm{span}(\fvec 1_{n})$, 
the {\it weighted disagreement dynamics} are obtained from \eqref{eq: LPV concensus system for dot theta - trivial phase shifts} in $\delta$-coordinates\,as
\begin{equation}
	\dt D \delta
	=
	- L(w_{ij}(t)) \, \delta
	\label{eq: LPV concensus system for dot delta-disagreement}
	\,.
\end{equation}
Consider the {\it weighted disagreement function} 
$\delta\!\mapsto\!\delta^{T}D\delta$ 
and its derivative along the dynamics \eqref{eq: LPV concensus system for dot delta-disagreement}\,given\,by
\begin{equation*}
	\dt
	\delta^{T}D\delta
	=
	- 2 \, \delta^{T} L(w_{ij}(t)) \delta
	\,.
\end{equation*}
Since $\delta^{T} D \fvec 1_{n} = 0$, it follows that $\delta \not \in
\mathrm{span}(\fvec 1_{n})$ and $\delta$ can be uniquely decomposed into
orthogonal components as $ \delta = (\fvec 1_{n}^{T} \delta/n) \, \fvec 1_{n} +
\delta_{\perp} $, where $\delta_{\perp}$ is the orthogonal projection of
$\delta$ on the subspace orthogonal to $\fvec 1_{n}$. By the Courant-Fischer
Theorem \cite{CDM:01}, the time derivative of the weighted disagreement
function can be upper-bounded (point-wise in time) with the algebraic connectivity
$\lambda_{2}(L(P_{ij}))$ as follows:
\begin{multline*}
	\dt \delta^{T}D\delta 
	= - 2 \, \delta_{\perp}^{T} L(w_{ij}(t)) \delta_{\perp}
	= - (H\delta_{\perp})^{T} \cdot \diag(P_{ij} \cos(\theta_{i}-\theta_{j})) \cdot (H\delta_{\perp}) \\
	\leq - \min\nolimits_{\{i,j\} \in \mc E}\{\cos(\theta_{i}-\theta_{j}) :\, \theta \in \bar\Delta(\gamma) \}  \cdot (H\delta_{\perp})^{T} \cdot \diag(P_{ij} ) \cdot (H\delta_{\perp}) 
	\leq - \lambda_{2}(L(P_{ij})) \cos(\gamma) \cdot \|\delta_{\perp}\|_{2}^{2}
	\,.
\end{multline*}
In the sequel, $\norm{\delta_{\perp}}$ will be bounded by
$\norm{\delta}$. In order to do so, let $\fvec 1_{\perp} =
(1/\norm{\delta_{\perp}}) \, \delta_{\perp}$ be the unit vector that
$\delta$ is projected on (in the subspace orthogonal to $\fvec 1_{n}$). The
norm of $\delta_{\perp}$ can be obtained as
$
\norm{\delta_{\perp}}
=
\norm{\delta^{T} \fvec 1_{\perp}}
=
\norm{\delta} \cos(\angle(\delta,\fvec 1_{\perp})) 
$.
The vectors $\delta$ and $\fvec 1_{\perp}$ each live on $(n-1)$-dimensional linear hyperplanes with normal vectors $D \fvec 1_{n}$ and $\fvec 1_{n}$, respectively, see Figure \ref{Fig: Qualitative illustration of the disagreement eigenspace and the subspace orthogonal to the agreement eigenspace} for an illustration.
\begin{figure}[t]
	\centering{
	\includegraphics[scale=0.42]{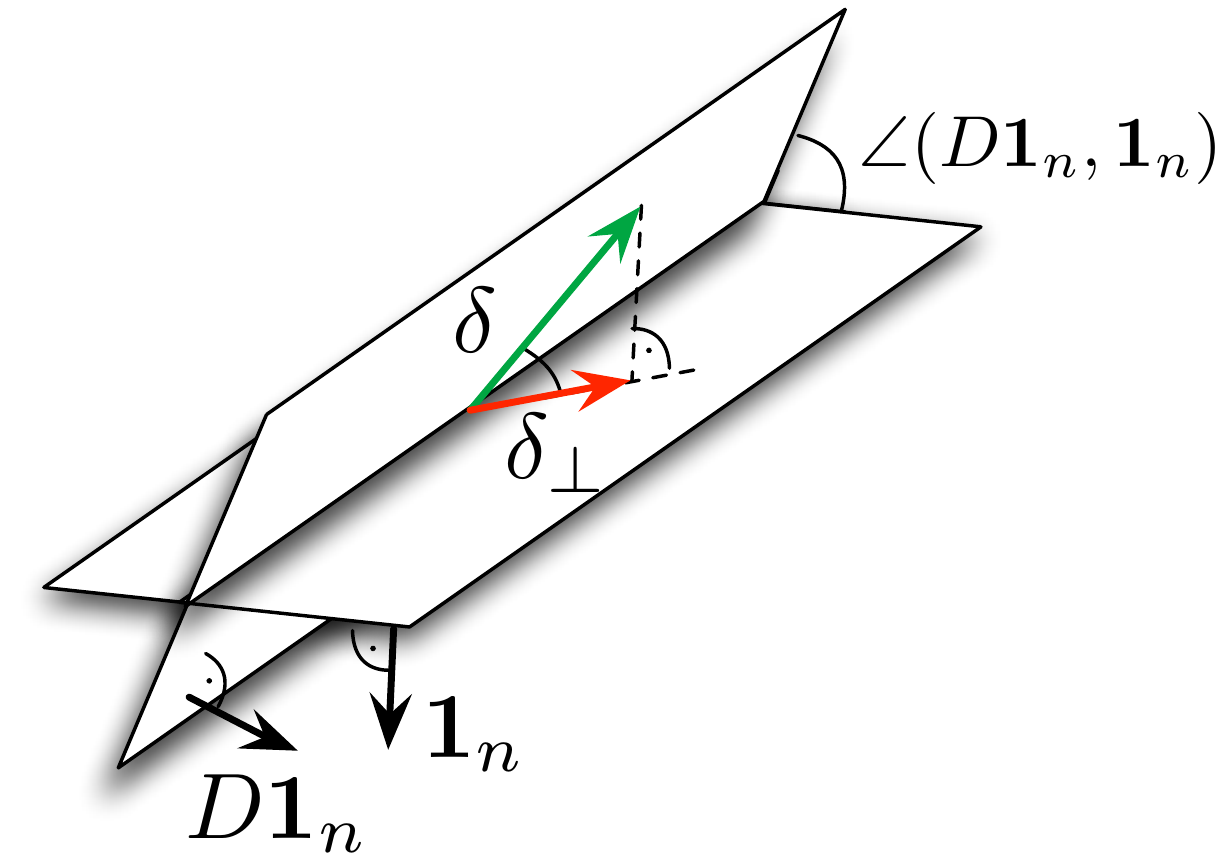}
	\caption{Illustration of the disagreement eigenspace and the orthogonal complement of $\mathbf 1_{n}$}
	\label{Fig: Qualitative illustration of the disagreement eigenspace and the subspace orthogonal to the agreement eigenspace}
	}
\end{figure}
The angle $\angle(\delta,\fvec 1_{\perp})$ is upper-bounded by
$\max_{\delta}\angle(\delta,\fvec 1_{\perp})$, which is said to be the {\it
  dihedral angle} and its sine is the {\it gap} between the two subspaces
\cite{CDM:01}.
Since both hyperplanes are of co-dimension~$1$, we obtain the dihedral
angle as the angle between the normal vectors $D \fvec 1_{n}$ and $\fvec 1_{n}$,
and it follows that $\angle(\delta,\fvec 1_{\perp}) \leq \angle(D \fvec
1_{n},\fvec 1_{n})$ (with equality for $n=2$). 
In summary, we have
$
\norm{\delta}
\geq
\norm{\delta_{\perp}}
\geq
\norm{\delta} \cos(\angle(D\fvec 1_{n},\fvec 1_{n})) 
$.

Finally, given $\subscr{D}{min} \norm{\delta}^{2} \leq \delta^{T}D\delta
\leq \subscr{D}{max} \norm{\delta}^{2}$ and $\subscr{\lambda}{fe}$ as stated in equation \eqref{eq: rate
  lambda_fe}, we obtain for the derivative of
the disagreement function
$
\dt
\delta^{T}D\delta
\leq
-
2\,\subscr{\lambda}{fe}
\delta^{T}D\delta
$.
An application of the Bellman-Gronwall Lemma 
yields $\delta(t)^{T}D\delta(t) \leq \delta(0)^{T}D\delta(0)\, e^{- 2
  \subscr{\lambda}{fe} (t)}$ for all $t \geq 0$. After
reusing the bounds on $\delta^{T}D\delta$, we obtain that the disagreement
vector $\delta(t)$ satisfies $ \norm{\delta(t)} \leq
\sqrt{\subscr{D}{max}/\subscr{D}{min}} \norm{\delta(0)} e^{-
  \subscr{\lambda}{fe} (t)} $ for all $t \geq 0$.
\end{proof}

\subsection{Phase Cohesiveness}\label{Subsection: Phase Cohesiveness}

The key assumption in Theorem \ref{Theorem: Frequency synchronization} is that the angular distances are bounded in the set $\Delta(\pi/2 - \subscr{\varphi}{max})$. This subsection provides two different approaches to deriving conditions for this phase cohesiveness assumption - the contraction property and ultimate boundedness arguments. The dynamical system describing the evolution of the phase differences for the non-uniform Kuramoto model \eqref{eq: Non-uniform Kuramoto model} reads\,\,as
\begin{equation}
	\!
	\dot{\theta}_{i} - \dot{\theta}_{j}
	\!=\!
	\frac{\omega_{i}}{D_{i}} - \frac{\omega_{j}}{D_{j}}
	- \sum\nolimits_{k=1}^{n} \left(
	\frac{P_{ik}}{D_{i}} \sin(\theta_{i}-\theta_{k} + \varphi_{ik})
	 - \frac{P_{jk}}{D_{j}} \sin(\theta_{j}-\theta_{k} + \varphi_{jk}) 
	 \right)
	,\,
	 i,j \in \until n
	\label{eq: Non-uniform Kuramoto model - phase differences - in components} 
	.
\end{equation}
Note that equation \eqref{eq: Non-uniform Kuramoto model - phase differences - in components} cannot have a fixed point of the form $\dot \theta_{i} \!=\! \dot \theta_{j}$ if the following condition is not\,\,met.

\begin{lemma}[Necessary Condition on Synchronization]
\label{Lemma: Necessary sync condition}
Consider the non-uniform Kuramoto model \eqref{eq: Non-uniform Kuramoto
  model}. For any two distinct $i,j \in \until n$ there exists no solution
of the form $\dot \theta_{i}(t) = \dot \theta_{j}(t)$, $t \geq 0$,\,\,if
\begin{equation}
	\left| \frac{\omega_{i}}{D_{i}} - \frac{\omega_{j}}{D_{j}} \right|
	>
	\sum_{k=1}^{n} \Bigl( \frac{P_{ik}}{D_{i}} + \frac{P_{jk}}{D_{j}} \Bigr).
	\label{eq: necessary sync condition}
\end{equation}
\end{lemma}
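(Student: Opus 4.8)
The plan is to argue by contraposition directly from the phase-difference dynamics \eqref{eq: Non-uniform Kuramoto model - phase differences - in components}, using nothing more than a pointwise magnitude bound on the coupling term. Suppose toward a contradiction that along some solution one has $\dot\theta_i(t) = \dot\theta_j(t)$ at a given time $t \geq 0$ (in particular, for all $t$, as in the statement). Then the left-hand side of \eqref{eq: Non-uniform Kuramoto model - phase differences - in components} vanishes, forcing the right-hand side to vanish as well, so that
\begin{equation*}
\frac{\omega_{i}}{D_{i}} - \frac{\omega_{j}}{D_{j}}
=
\sum_{k=1}^{n} \left(
\frac{P_{ik}}{D_{i}} \sin(\theta_{i}-\theta_{k}+\varphi_{ik})
- \frac{P_{jk}}{D_{j}} \sin(\theta_{j}-\theta_{k}+\varphi_{jk})
\right).
\end{equation*}

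Next I would take absolute values of both sides and bound the right-hand side term by term using the triangle inequality together with the elementary estimate $|\sin(\cdot)| \leq 1$, which yields
\begin{equation*}
\left| \frac{\omega_{i}}{D_{i}} - \frac{\omega_{j}}{D_{j}} \right|
\leq
\sum_{k=1}^{n} \left( \frac{P_{ik}}{D_{i}} + \frac{P_{jk}}{D_{j}} \right).
\end{equation*}
This is precisely the negation of hypothesis \eqref{eq: necessary sync condition}. Hence, whenever \eqref{eq: necessary sync condition} holds, the frequency mismatch $|\omega_{i}/D_{i} - \omega_{j}/D_{j}|$ strictly exceeds the maximal possible magnitude of the coupling term, so the right-hand side of \eqref{eq: Non-uniform Kuramoto model - phase differences - in components} is bounded away from zero. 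Consequently $\dot\theta_{i}(t) - \dot\theta_{j}(t) \neq 0$ at every time $t$, and no solution can satisfy $\dot\theta_{i}(t) = \dot\theta_{j}(t)$.

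There is essentially no serious obstacle in this argument: it is a uniform bound that exploits none of the structure of the coupling beyond boundedness of the sine. The only point deserving slight care is the logical direction — the displayed inequality is a \emph{necessary} condition for the phase difference to be stationary at a single instant, so its contrapositive rules out stationarity at \emph{every} instant, a conclusion strictly stronger than what the statement literally demands. I would close by remarking that the bound is deliberately conservative (it discards the phase shifts $\varphi_{ij}$ and any cancellation among the summands over $k$), so condition \eqref{eq: necessary sync condition} is sufficient but far from necessary for the nonexistence of a synchronized pair $\{i,j\}$; this is consistent with its intended role as a simple \emph{a priori} obstruction to synchronization rather than a sharp characterization.
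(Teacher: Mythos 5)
Your proof is correct and is essentially the paper's own argument, which the paper leaves implicit in the sentence preceding the lemma: equation \eqref{eq: Non-uniform Kuramoto model - phase differences - in components} cannot vanish when \eqref{eq: necessary sync condition} holds, because $|\sin(\cdot)|\leq 1$ and the triangle inequality bound the coupling term in magnitude by $\sum_{k=1}^{n}\bigl(P_{ik}/D_{i}+P_{jk}/D_{j}\bigr)$, so the frequency mismatch can never be balanced. Your contrapositive framing and the closing remarks (ruling out equality at every single instant, and the conservativeness of discarding the phase shifts and cancellations) are accurate and consistent with the paper's intent.
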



Condition \eqref{eq: necessary sync condition} can be interpreted as ``the coupling between oscillators $i$ and $j$ needs to dominate their non-uniformity'' such that they can synchronize. For the classic Kuramoto model \eqref{eq: Kuramoto system} condition \eqref{eq: necessary sync condition} reduces to 
$
K < n/(2(n-1)) \cdot (\omega_{i} - \omega_{j})
$, a necessary condition derived also in \cite{NC-MWS:08,AJ-NM-MB:04,JLvH-WFW:93}. We\,\,remark that condition \eqref{eq: necessary sync condition} is only a loose bound for synchronization 
since it does take into account the effect of lossy coupling induced by the phase shift $\varphi_{ij}$, which becomes obvious when expanding the sinusoidal coupling terms in \eqref{eq: Non-uniform Kuramoto model - phase differences - in components} as in equation \eqref{eq: non-uniform Kuramoto model - expanded}. Nevertheless, condition \eqref{eq: necessary sync condition} indicates that the coupling needs to dominate the non-uniformity and possibly also disadvantageous effects of the lossy coupling.

In order to show the phase cohesiveness $\theta(t) \in \Delta(\pi/2 - \subscr{\varphi}{max})$, the Kuramoto
literature provides various methods such as quadratic Lyapunov functions
\cite{NC-MWS:08}, contraction mapping \cite{AJ-NM-MB:04}, geometric
\cite{FDS-DA:07}, or Hamiltonian arguments
\cite{REM-SHS:05,JLvH-WFW:93}.
 Due to the non-symmetric coupling via the weights $P_{ij}/D_{i}$ and the phase shifts
$\varphi_{ij}$ none of the mentioned methods appears to be easily
applicable to the non-uniform Kuramoto model non-uniform Kuramoto model  \eqref{eq: Non-uniform Kuramoto model}. 
A different approach from the
literature on consensus protocols \cite{LM:05,ZL-BF-MM:07,GSS-UM-FA:09} is
based on convexity and contraction and aims to show that the arc in which
all phases are contained is of non-increasing length. A modification of
this approach turns out to be applicable to non-uniform Kuramoto
oscillators with a complete coupling graph and results in the following theorem.

\begin{theorem}
\label{Theorem: Synchronization Condition I}
{\bf(Synchronization condition I)}
Consider the non-uniform Kuramoto-model \eqref{eq: Non-uniform Kuramoto
  model}, where the graph induced by $P=P^{T}$ is complete. Assume that the minimal lossless coupling of any oscillator to the network is
larger than a critical value, i.e., 
\begin{equation}
	\subscr{\Gamma}{min}
	:=
	n \min_{i \neq j} \left\{ \frac{P_{ij}}{D_{i}} \cos(\varphi_{ij}) \right\}
	>
	\subscr{\Gamma}{critical}
	:=
	\frac{1}{\cos(\subscr{\varphi}{max})}
	\Bigl(
	\max_{i \neq j}\left|\frac{\omega_{i}}{D_{i}} - \frac{\omega_{j}}{D_{j}}\right| 
	+
	2
	\max_{i \in \until n}
	\sum\limits_{j=1}^{n}
	\frac{P_{ij}}{D_{i}} \sin(\varphi_{ij})
	\Bigr)
  	\label{eq: key-assumption - Kuramoto}
  	\,.
\end{equation}
Accordingly, define $\subscr{\gamma}{min} \in [0,\pi/2 - \subscr{\varphi}{max}[$ and $\subscr{\gamma}{max} \in {]\pi/2,\pi]}$ as unique solutions to 
the equations $\sin(\subscr{\gamma}{min}) = \sin(\subscr{\gamma}{max}) = \cos(\subscr{\varphi}{max}) \, \subscr{\Gamma}{critical} / \subscr{\Gamma}{min}$. Then

\begin{enumerate}

%

\item {\bf phase cohesiveness}: the set $\bar\Delta(\gamma)$ is positively
  invariant for every $\gamma \in
  [\subscr{\gamma}{min},\subscr{\gamma}{max}]$, and each trajectory
  starting in $\Delta(\subscr{\gamma}{max})$ reaches
  $\bar\Delta(\subscr{\gamma}{min})$; and
  
\item {\bf frequency synchronization:} for every $\theta(0) \in
  \Delta(\subscr{\gamma}{max})$, the frequencies $\dot{\theta}_{i}(t)$
  synchronize exponentially to some frequency $\dot{\theta}_{\infty} \in
  [\subscr{\dot{\theta}}{min}(0) , \subscr{\dot{\theta}}{max}(0)]$.
    
\end{enumerate}
 
\end{theorem}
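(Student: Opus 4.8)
\emph{Proof strategy.} The plan is to prove the phase-cohesiveness claim~1) by a nonsmooth contraction argument on the length of the arc containing all phases, and then to deduce frequency synchronization~2) directly from the already-established Theorem~\ref{Theorem: Frequency synchronization}. Introduce the arc-length function $V(\theta) = \max_{i,j \in \until{n}}|\theta_i - \theta_j|$; on $\Delta(\gamma)$ with $\gamma < \pi$ all phases lie in an open half-circle, so the pairwise differences are well defined and $V(\theta)$ equals the length of the shortest arc containing $\theta_1,\dots,\theta_n$. Since the boundary of $\bar\Delta(\gamma)$ is the level set $\{V = \gamma\}$, positive invariance of $\bar\Delta(\gamma)$ follows once I show that the upper Dini derivative of $t \mapsto V(\theta(t))$ along~\eqref{eq: Non-uniform Kuramoto model} is non-positive whenever $V(\theta) = \gamma$. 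As $V$ is only piecewise smooth (a maximum of smooth functions), I would invoke the standard Danskin-type fact that $D^{+}V$ equals the largest value of $\dot\theta_m - \dot\theta_\ell$ over indices $m$, $\ell$ attaining respectively the maximal and minimal phase on the current arc; since the estimate below holds uniformly over every such extremal pair, it bounds $D^{+}V$.

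\emph{Core estimate.} Using the expanded dynamics~\eqref{eq: non-uniform Kuramoto model - expanded}, I split $\dot\theta_m - \dot\theta_\ell$ into the frequency mismatch $\omega_m/D_m - \omega_\ell/D_\ell$, the \emph{lossless} coupling $S^{\cos} := \sum_{k} \tfrac{P_{mk}}{D_m}\cos(\varphi_{mk})\sin(\theta_m - \theta_k) - \sum_{k}\tfrac{P_{\ell k}}{D_\ell}\cos(\varphi_{\ell k})\sin(\theta_\ell - \theta_k)$, and the \emph{lossy} coupling collecting the $\sin(\varphi)\cos(\cdot)$ terms. The mismatch is bounded by $\max_{i\neq j}|\omega_i/D_i - \omega_j/D_j|$, and the lossy coupling by $2\max_{i}\sum_j (P_{ij}/D_i)\sin(\varphi_{ij})$ using $|\cos(\cdot)|\le 1$. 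The decisive step is the lower bound on $S^{\cos}$: since $m$ is maximal and $\ell$ minimal, $\sin(\theta_m - \theta_k)\ge 0$ and $\sin(\theta_\ell - \theta_k)\le 0$ for all $k$, so each coefficient may be replaced by its minimum $\subscr{\Gamma}{min}/n = \min_{i\neq j}\{(P_{ij}/D_i)\cos(\varphi_{ij})\}$ without increasing the sum, giving $S^{\cos}\ge (\subscr{\Gamma}{min}/n)\sum_{k}\bigl(\sin(\theta_m - \theta_k) - \sin(\theta_\ell - \theta_k)\bigr)$. Applying the identity $\sin(\theta_m - \theta_k) - \sin(\theta_\ell - \theta_k) = 2\sin(\gamma/2)\cos\bigl(\tfrac{\theta_m + \theta_\ell}{2} - \theta_k\bigr)$ and noting $|\tfrac{\theta_m + \theta_\ell}{2} - \theta_k|\le \gamma/2$, every summand is at least $2\sin(\gamma/2)\cos(\gamma/2) = \sin(\gamma)$, so the $n$ terms sum to at least $n\sin(\gamma)$ and hence $S^{\cos}\ge \subscr{\Gamma}{min}\sin(\gamma)$. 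I expect this to be the main obstacle: one must harvest the full factor $n$ from the complete graph --- not merely the single extremal pair $\{m,\ell\}$ --- so that the threshold matches the definition of $\subscr{\Gamma}{min}$; using only the pair $\{m,\ell\}$ would lose a factor of order $n$.

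\emph{Conclusion of 1).} Combining the three estimates with the definition of $\subscr{\Gamma}{critical}$ gives $D^{+}V \le \cos(\subscr{\varphi}{max})\subscr{\Gamma}{critical} - \subscr{\Gamma}{min}\sin(\gamma)$, which is non-positive exactly when $\sin(\gamma)\ge \cos(\subscr{\varphi}{max})\subscr{\Gamma}{critical}/\subscr{\Gamma}{min}$. Under the standing assumption~\eqref{eq: key-assumption - Kuramoto} the right-hand side is strictly smaller than $\cos(\subscr{\varphi}{max})$, so $\subscr{\gamma}{min} < \pi/2 - \subscr{\varphi}{max}$, and by concavity of $\sin$ on $[0,\pi]$ this inequality holds precisely for $\gamma\in[\subscr{\gamma}{min},\subscr{\gamma}{max}]$, with $\subscr{\gamma}{min}$ and $\subscr{\gamma}{max}$ the two roots defined in the statement. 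This proves positive invariance of $\bar\Delta(\gamma)$ for every such $\gamma$. On the open interval $\gamma\in{]\subscr{\gamma}{min},\subscr{\gamma}{max}[}$ the bound is strictly negative and bounded away from zero on compact subsets, so $V(\theta(t))$ strictly decreases and any trajectory from $\Delta(\subscr{\gamma}{max})$ reaches $\bar\Delta(\subscr{\gamma}{min})$ in finite time, completing~1).

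\emph{Conclusion of 2).} Because $\subscr{\gamma}{min} < \pi/2 - \subscr{\varphi}{max}$, once a trajectory enters the positively invariant set $\bar\Delta(\subscr{\gamma}{min})\subset\bar\Delta(\pi/2 - \subscr{\varphi}{max})$ all coupling weights $a_{ij} = (P_{ij}/D_i)\cos(\theta_i - \theta_j + \varphi_{ij})$ are strictly positive, and the complete graph has a globally reachable node. Theorem~\ref{Theorem: Frequency synchronization} then yields exponential frequency synchronization to a common $\dot\theta_{\infty}$, and the enclosure $\dot\theta_{\infty}\in[\subscr{\dot{\theta}}{min}(0),\subscr{\dot{\theta}}{max}(0)]$ follows from the contraction property of the frequency consensus dynamics invoked there.
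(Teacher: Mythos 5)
Your proposal is correct and takes essentially the same route as the paper's own proof: the same arc-length function $V(\theta)=\max_{i,j}|\theta_i-\theta_j|$ with the upper Dini derivative evaluated at extremal indices $m,\ell$, the same three-way split into frequency mismatch, lossless coupling, and lossy coupling with the bound $|\cos(\cdot)|\le 1$, and the same decisive step of extracting the minimal coefficient $\subscr{\Gamma}{min}/n$ from all $n$ summands of the complete graph and applying the identity $\sin x+\sin y = 2\sin\bigl(\tfrac{x+y}{2}\bigr)\cos\bigl(\tfrac{x-y}{2}\bigr)$ to obtain $\subscr{\Gamma}{min}\sin(\gamma)$. Your concavity argument yielding the two roots $\subscr{\gamma}{min},\subscr{\gamma}{max}$, the strict-decrease/finite-time-reaching conclusion, and the appeal to Theorem~\ref{Theorem: Frequency synchronization} on the positively invariant set $\bar\Delta(\subscr{\gamma}{min})\subset\Delta(\pi/2-\subscr{\varphi}{max})$ all match the paper's proof.
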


Condition \eqref{eq: key-assumption - Kuramoto} is interpreted in
Remark~\ref{Remark: Physical interpretation of Main Result}. 
In essence, Theorem \ref{Theorem: Synchronization Condition I} is based on
the {\it contraction property}: the positive invariance of $\bar\Delta(\gamma)$
is equivalent to showing that all angles $\theta_{i}(t)$ are contained in a rotating arc of non-increasing maximal length $\gamma$. This contraction analysis is similar to that of the
consensus algorithms 
in \cite{LM:05,ZL-BF-MM:07,GSS-UM-FA:09}, which derive their
results on $\mbb R^{n}$. Throughout the proof of Theorem \ref{Theorem: Synchronization Condition I} we comment on different possible branches leading to slightly different conditions than\,\,\eqref{eq: key-assumption - Kuramoto}. These branches are explored in detail in the Appendix \ref{Appendix}.

\begin{remark}[Reduction of Theorem \ref{Theorem: Synchronization Condition I} to classic Kuramoto oscillators:]\label{Remark: 1st Remark on
    Kuramoto bounds}
  For the classic Kuramoto oscillators \eqref{eq: Kuramoto system} the
  sufficient condition \eqref{eq: key-assumption - Kuramoto} of Theorem
  \ref{Theorem: Synchronization Condition I} specializes to
  \begin{equation}
    K
    >
    {\subscr{K}{critical}}
    :=
    \subscr{\omega}{max} - \subscr{\omega}{min}
    \label{eq: Kuramoto bound on coupling gain K}
    \,.
  \end{equation}
  In other words, if $K>{\subscr{K}{critical}}$, then 
for every $\theta(0) \in \Delta(\subscr{\gamma}{max})$ the oscillators synchronize and are ultimately phase cohesive in $\bar\Delta(\subscr{\gamma}{min})$, where $\subscr{\gamma}{max} \in {]\pi/2,\pi]}$ and $\subscr{\gamma}{min} \in {[0,\pi/2[}$ are the unique solutions to the equations  $\sin(\subscr{\gamma}{min}) = \sin(\subscr{\gamma}{max}) ={\subscr{K}{critical}}/K$. 
To the best of our knowledge, condition \eqref{eq: Kuramoto bound on
  coupling gain K} on the coupling gain $K$ is the tightest explicit
sufficient synchronization condition that has been presented in the
Kuramoto literature so far. In fact, the bound \eqref{eq: Kuramoto bound on
  coupling gain K} is close to the necessary condition for synchronization
$K> {\subscr{K}{critical}} \, n/(2(n-1))$ derived in Lemma \ref{Lemma:
  Necessary sync condition} and
\cite{NC-MWS:08,AJ-NM-MB:04,JLvH-WFW:93}. Obviously, for $n=2$ condition
\eqref{eq: Kuramoto bound on coupling gain K} is necessary and sufficient
for the onset of synchronization. Other sufficient bounds given in
the\,\,literature scale asymptotically with $n$, e.g., \cite[Theorem
2]{AJ-NM-MB:04} or \cite[proof of Theorem 4.1]{NC-MWS:08}. To compare our
condition \eqref{eq: Kuramoto bound on coupling gain K} with the bounds in
\cite{NC-MWS:08,FDS-DA:07,GSS-UM-FA:09}, we note from the proof of Theorem
\ref{Theorem: Synchronization Condition I} that our condition can be
equivalently stated as follows. The set 
$\bar\Delta(\pi/2-\gamma)$, for $\gamma \in {]0,\pi/2]}$, is positively
invariant if
  \begin{equation}
    K \geq K(\gamma) := \frac{\subscr{K}{critical}}{\cos(\gamma)} =
    \frac{\subscr{\omega}{max} - \subscr{\omega}{min}}{\cos(\gamma)} 
    \label{eq: Kuramoto bound on coupling gain K - 2}
    \,.
  \end{equation}
  Our bound \eqref{eq: Kuramoto bound on coupling gain K - 2} improves the
  bound $K>K(\gamma)n/2$ derived in \cite[proof of Theorem 4.1]{NC-MWS:08}
  via a quadratic Lyapunov function, the bound $K>K(\gamma)n/(n-2)$
  derived in \cite{GSS-UM-FA:09} via contraction arguments similar to ours, and
  the bound derived geometrically in \cite[proof of
  Proposition~1]{FDS-DA:07} that, after some manipulations, can be written
  in our notation as $K \geq K(\gamma)
  \cos((\pi/2-\gamma)/2)/\cos(\pi/2-\gamma)$.  Our ongoing research also reveals that the bound \eqref{eq: Kuramoto bound on coupling gain K} is
  tight for a bimodal distribution of the natural\,frequencies\,$\omega_{i} \!\in\! \{\subscr{\omega}{max},\subscr{\omega}{min}\}$ and also satisfies the
  implicit consistency conditions in \cite{DA-JAR:04}. Thus, \eqref{eq:
    Kuramoto bound on coupling gain K} is a necessary and sufficient
  condition for synchronization when the natural frequencies $\omega_{i}$
  are only known to be contained in
  $[\subscr{\omega}{min},\subscr{\omega}{max}]$. We elaborate on this interesting circle of ideas in a separate publication \cite{FD-FB:10w}.
  
    In summary, condition
  \eqref{eq: key-assumption - Kuramoto} in Theorem \ref{Theorem: Synchronization Condition I} improves the
  known~\cite{NC-MWS:08,AJ-NM-MB:04,JLvH-WFW:93,FDS-DA:07,GSS-UM-FA:09}
  sufficient conditions for synchronization of classic Kuramoto
  oscillators, and it is a necessary and sufficient if the particular distribution of the natural frequencies $\omega_{i} \in [\subscr{\omega}{min},\subscr{\omega}{max}]$ is unknown. 
  \oprocend
\end{remark}

{\itshape Proof of Theorem  \ref{Theorem: Synchronization Condition I}: }
We start by proving the positive invariance of $\bar\Delta(\gamma)$ for $\gamma \in {[0,\pi]}$.  Recall the geodesic distance between two angles on 
$\mathbb{T}^1$ and define the non-smooth function $\map{V}{\mathbb{T}^n}{[0,\pi]}$ by
\begin{equation*} 
  V(\psi) =  \max\setdef{|\psi_i-\psi_j|}{i,j\in\until{n}}.
\end{equation*}
By assumption, the angles $\theta_i(t)$ belong to the set
$\bar\Delta(\gamma)$ at time $t=0$, that is, they are all contained in an
arc of length $\gamma \in {[0,\pi]}$. In this case, $V(\psi)$ can
equivalently be written as maximum over a set of differentiable functions,
that is, $V(\psi) = \max\setdef{\psi_i-\psi_j}{i,j\in\until{n}}$. The arc
containing all angles has two boundary points: a counterclockwise maximum
and a counterclockwise minimum. If we let $\subscr{I}{max}(\psi)$
(respectively $\subscr{I}{min}(\psi)$) denote the set indices of the angles
$\psi_1,\dots,\psi_n$ that are equal to the counterclockwise maximum
(respectively the counterclockwise minimum), then we may\,\,write
\begin{equation*}
  V(\psi) =  \psi_{m'} - \psi_{\ell'},   \quad \text{for all } 
  m'\in\subscr{I}{max}(\psi)   \text{ and }
  \ell'\in\subscr{I}{min}(\psi).
\end{equation*}
We aim to show that all angles remain in $\bar\Delta(\gamma)$ for all subsequent times
$t>0$.  Note that $\theta(t)\in\bar\Delta(\gamma)$ if and only if
$V(\theta(t))\leq \gamma$.  Therefore,
$\bar\Delta(\gamma)$ is positively invariant if and only if $V(\theta(t))$ does
not increase at any time $t$ such that
$V(\theta(t))= \gamma$. 
The {\it upper Dini derivative} of
$V(\theta(t))$ along the dynamical system~\eqref{eq: Non-uniform Kuramoto
  model - phase differences - in components} is given as in \cite[Lemma
2.2]{ZL-BF-MM:07}
\begin{align*}
	D^{+} V (\theta(t))
	&=
	\lim_{h \downarrow 0}\sup \frac{V(\theta(t+h)) - V(\theta(t))}{h}
	=
	\dot{\theta}_{m}(t) - \dot{\theta}_{\ell}(t)
	\,,
\end{align*}
where $m \in \subscr{I}{max}(\theta(t))$ and $\ell \in
\subscr{I}{min}(\theta(t))$ are indices with the properties that
\begin{gather*}
  \dot{\theta}_m(t) = \max \setdef{ \dot{\theta}_{m'}(t) }{m'\in
    \subscr{I}{max}(\theta(t))}, 
  \enspace \text{and } \enspace 
  \dot{\theta}_\ell(t) = \min
  \setdef{ \dot{\theta}_{\ell'}(t) }{\ell'\in \subscr{I}{min}(\theta(t))}.
\end{gather*}
Written out in components (in the expanded form \eqref{eq: non-uniform Kuramoto model - expanded}) $D^{+} V (\theta(t))$ takes the form
\begin{align}
	D^{+} V (\theta(t))
	=&\;
	\frac{\omega_{m}}{D_{m}} - \frac{\omega_{\ell}}{D_{\ell}} - 
	\sum\nolimits_{k=1}^{n} \left( 
	a_{mk} \sin(\theta_{m}(t) - \theta_{k}(t)) 
	+
	a_{\ell k} \sin(\theta_{k}(t)-\theta_{\ell}(t)) 
	\right)
	\notag
	\\
	& - 
	\sum\nolimits_{k=1}^{n} \left( 
	b_{mk} \cos(\theta_{m}(t) - \theta_{k}(t)) 
	- 
	b_{\ell k} \cos(\theta_{\ell}(t)-\theta_{k}(t)) 
	\right)	
	\label{eq: D+ with cosine terms}
	\,,
\end{align}
where we used the abbreviations $a_{ik} := P_{i k} \cos(\varphi_{ik})/D_{i}$ and $b_{ik} := P_{ik}\sin(\varphi_{ik})/D_{i}$. The equality $V(\theta(t)) = \gamma$ implies that, measuring distances counterclockwise and modulo additional
terms equal to multiples of $2\pi$, we have $\theta_{m}(t) - \theta_{\ell}(t) = \gamma$, $0 \leq \theta_{m}(t) - \theta_{k}(t) \leq \gamma$,
and $0 \leq \theta_{k}(t) - \theta_{\ell}(t) \leq \gamma$. To simplify the notation in the subsequent arguments, we do not aim at the tightest and least conservative bounding of the two sums on the right-hand side of \eqref{eq: D+ with cosine terms} and continue as follows.\footnote{Besides tighter bounding of the right-hand side of \eqref{eq: D+ with cosine terms}, the proof can alternatively be continued by adding and subtracting\,the coupling with zero phase shifts in \eqref{eq: D+ with cosine terms} or by noting that the right-hand side of \eqref{eq: D+ with cosine terms} is a convex function of $\theta_{k} \in [\theta_{\ell},\theta_{m}]$ that achieves its maximum at the boundary $\theta_{k} \in \{\theta_{\ell},\theta_{m}\}$. If the analysis is restricted to $\gamma \in {[0,\pi/2]}$, the term $b_{mk}$ can be dropped.}

Since both sinusoidal terms on the right-hand side of \eqref{eq: D+ with cosine terms} are positive, they can be lower-bounded as
\begin{multline*} 
	a_{mk} \sin(\theta_{m}(t) - \theta_{k}(t)) 
	+
	a_{\ell k} \sin(\theta_{k}(t)-\theta_{\ell}(t))
	\geq
	\min_{i \in \{m,\ell\} \setminus \{k\}} \left\{ a_{ik}  \right\}
	\big( \sin(\theta_{m}(t) - \theta_{k}(t)) + \sin(\theta_{k}(t)-\theta_{\ell}(t)) \big)
	\\
	=
	2\, \min_{i \in \{m,\ell\} \setminus \{k\}} \left\{ a_{ik}  \right\}
	\sin\!\left(\frac{\theta_{m}(t) - \theta_{\ell}(t)}{2}\right) 
	\cos\!\left(\frac{\theta_{m}(t) + \theta_{\ell}(t)}{2} - \theta_{k}(t) \right)
	\\
	\geq
	2 \, \min_{i \in \{m,\ell\} \setminus \{k\}} \left\{ a_{ik}  \right\}
	\sin\Bigl( \frac{\gamma}{2} \Bigr) \cos\Bigl( \frac{\gamma}{2} \Bigr)
	=
	\min_{i \in \{m,\ell\} \setminus \{k\}} \left\{ a_{ik}  \right\}	\sin(\gamma)
	\,,
\end{multline*}
where we applied the trigonometric identities $\sin(x) + \sin(y) = 2 \sin(\frac{x+y}{2}) \cos(\frac{x-y}{2})$ and $2 \sin(x) \cos(y) = \sin(x-y) + \sin(x+y)$. The cosine terms in \eqref{eq: D+ with cosine terms} can be lower bounded in $\bar\Delta(\gamma)$  as
$
	b_{mk} \cos(\theta_{m}(t) - \theta_{k}(t)) 
	- 
	b_{\ell k} \cos(\theta_{\ell}(t)-\theta_{k}(t))
	\geq
	- b_{mk} - b_{\ell k} 
$. In summary, $D^{+}V(\theta(t))$ in \eqref{eq: D+ with cosine terms} can be upper bounded by 
\begin{align*}
	D^{+} V (\theta(t)) 
	\leq&\;
	\frac{\omega_{m}}{D_{m}} - \frac{\omega_{\ell}}{D_{\ell}} -
  	\sum\nolimits_{k=1}^{n}  
	\min_{i \in \{m,\ell\} \setminus \{k\}} \left\{ a_{ik}  \right\} \sin(\gamma)
	+
	\sum\nolimits_{k=1}^{n}
	b_{m k}
	+
	\sum\nolimits_{k=1}^{n}
	b_{\ell k}
	\\
	\leq&\;
	\max_{i \neq j}\left|\frac{\omega_{i}}{D_{i}} - \frac{\omega_{j}}{D_{j}}\right| -
  	n \min_{i \neq j} \left\{ \frac{P_{ij}}{D_{i}} \cos(\varphi_{ij}) \right\} \sin(\gamma)
	+
	2\,
	\max_{i \in \until n}
	\sum\nolimits_{j=1}^{n}
	b_{ij}
	\,,
\end{align*}
where we further maximized the coupling terms and the differences in natural frequencies over all possible pairs $\{m,\ell\}$.
It follows that $V(\theta(t))$ is non-increasing for all $\theta(t) \in \bar\Delta(\gamma)$ if
\begin{equation}
	\subscr{\Gamma}{min} \sin(\gamma)
	\geq
	\cos(\subscr{\varphi}{max}) \, \subscr{\Gamma}{critical}
	\label{eq: D+ inequality}
	\,,
\end{equation}
where $\subscr{\Gamma}{min}$ and $\subscr{\Gamma}{critical}$ are defined in \eqref{eq: key-assumption - Kuramoto}.
The left-hand side of \eqref{eq: D+ inequality} is a strictly concave function of $\gamma \in {[0,\pi]}$. Thus, there exists an open set of arc lengths $\gamma$ including $\gamma^{*} = \pi/2-\subscr{\varphi}{max}$ satisfying inequality \eqref{eq: D+ inequality} if and only if inequality \eqref{eq: D+ inequality} is true at $\gamma^{*}=\pi/2-\subscr{\varphi}{max}$ with the strict inequality\,\,sign%
, which corresponds to condition \eqref{eq: key-assumption - Kuramoto} in the statement of Theorem \ref{Theorem: Synchronization Condition I}. Additionally, if these two equivalent statements are true, then $V(\theta(t))$ is non-increasing in $\bar\Delta(\gamma)$ for all $\gamma \in [\subscr{\gamma}{min},\subscr{\gamma}{max}]$, where $\subscr{\gamma}{min} \in {[0,\pi/2 - \subscr{\varphi}{max}[}$ and $\subscr{\gamma}{max} \in {]\pi/2,\pi]}$ are given as unique solutions to inequality \eqref{eq: D+ inequality} with equality sign. Moreover, $V(\theta(t))$ is strictly decreasing in $\bar\Delta(\gamma)$ for all $\gamma \in {]\subscr{\gamma}{min},\subscr{\gamma}{max}[}$. This concludes the proof of statement 1) and ensures that for every $\theta(0) \in \Delta(\subscr{\gamma}{max})$, there exists $T \geq 0$ such that $\theta(t) \in \bar\Delta(\pi/2 - \subscr{\varphi}{max})$ for all $t \geq T$. Thus, the positive invariance assumption of Theorem \ref{Theorem: Frequency synchronization} is satisfied, and statement 2) of Theorem \ref{Theorem: Synchronization Condition I} follows.
\hspace*{\fill}~\QED\par\endtrivlist\unskip

In summary, Theorem \ref{Theorem: Synchronization Condition I} presents
sufficient conditions for the synchronization of the non-uniform Kuramoto
model and is based on the bound \eqref{eq:
  key-assumption - Kuramoto}. Condition \eqref{eq: key-assumption -
  Kuramoto} is a worst-case 
  bound, both on the parameters and on the initial angles. In the remainder of this section, we aim at deriving a two-norm type bound and require only connectivity of the graph induced by $P=P^{T}$ and not necessarily completeness.

We start our discussion with some preliminary notation and concepts. The following analysis is formally carried out for the complete graph, but, without loss of generality, we assume that some weights $P_{ij}=P_{ji}$ can be zero and the non-zero weights $P=P^{T}$ induce a connected graph.
Let $H \in \mbb R^{n(n-1)/2 \times n}$ be the incidence matrix of the complete graph with $n$ nodes and recall that for a vector
$x \in \mbb R^{n}$ the vector of {\it all} difference variables is $H x = (x_{2} - x_{1}, \dots)$.
The phase difference dynamics \eqref{eq: Non-uniform Kuramoto model - phase differences - in components} (with the sinusoidal coupling expanded as in \eqref{eq: non-uniform Kuramoto model - expanded}) can be reformulated in a compact vector notation\,\,as
\begin{equation}
	\dt H \theta
	=
	H D^{-1} \omega  
	- HD^{-1}H^{T}\diag(P_{ij}\cos(\varphi_{ij}))\sinbf(H\theta)
	- H X
	\label{eq: Non-uniform Kuramoto model - phase differences - in vector form}
	\,,
\end{equation}
where $X 
\in \mbb R^{n}$ is the vector of lossy coupling with components
$X_{i} = \sum\nolimits_{j=1}^{n} (P_{ij}/D_{i}) \sin(\varphi_{ij}) \cos(\theta_{i}-\theta_{j})$ and $\sinbf(H\theta)$ is the multivariable sine.
%
  The set of differential equations \eqref{eq: Non-uniform Kuramoto model - phase differences - in vector form} is well defined on $\mbb T^{n}$:
the left-hand side of \eqref{eq: Non-uniform Kuramoto model - phase differences - in vector form} is the vector of frequency differences $H \dot{\theta}=(\dot{\theta}_{2} - \dot{\theta}_{1},\dots)$ taking values in the tangent space to $\mbb T^{n}$, and 
the right-hand side of \eqref{eq: Non-uniform Kuramoto model - phase differences - in vector form} is a well-posed vector-valued function of $\theta \in \mbb T^{n}$.  

With slight abuse of notation, we denote the two-norm of the vector of all
 geodesic distances by $\norm{H \theta}_{2} =( \sum_{i} \sum_{j}
|\theta_{i}-\theta_{j}|^{2})^{1/2}$, and aim at ultimately bounding the
evolution of $\norm{H \theta(t)}_{2}$. Following a classic Kuramoto
analysis \cite{AJ-NM-MB:04,EC-PM:08,REM-SHS:05,JLvH-WFW:93}, we note that
the non-uniform Kuramoto model \eqref{eq: Non-uniform Kuramoto model} with
$\omega \equiv \fvec 0$ constitutes a Hamiltonian system with the
Hamiltonian $U(\theta)|_{\omega=\fvec 0}$ defined in
equation \eqref{eq: potential energy}. An analysis of \eqref{eq: Non-uniform Kuramoto model - phase differences -
  in vector form} by Hamiltonian arguments is possible, but results in very
conservative conditions. In the recent Kuramoto
literature \cite{NC-MWS:08,AJ-NM-MB:04}, a different Lyapunov function
considered for the uniform Kuramoto model \eqref{eq: Kuramoto system} evolving on $\mbb R^{n}$ is
simply $\norm{H\theta}_{2}^{2}$. Unfortunately, in the case of non-uniform
rates $D_{i}$ this function's Lie derivative is sign-indefinite.
However, it is possible to identify a similar Lyapunov function
that has a Lie derivative with symmetric coupling. Consider the function $\mc W:\,\mbb T^{n} \to \mbb R$ defined\,\,by
\begin{equation}
	\mc W(\theta)
	=
	\frac{1}{4} \sum\nolimits_{i=1}^{n} \sum\nolimits_{j=1}^{n} D_{i}D_{j}\, |\theta_{i}-\theta_{j}|^{2}
	\label{eq: mathcal W(theta)}
	\,.
\end{equation}
A Lyapunov analysis of system \eqref{eq: Non-uniform Kuramoto model - phase
  differences - in vector form} via the Lyapunov function $\mc W$ leads to
the following theorem.

\begin{theorem}
\label{Theorem: Synchronization Condition II}
{\bf(Synchronization condition II)}
Consider the non-uniform Kuramoto model \eqref{eq: Non-uniform Kuramoto model},
where the graph induced by $P=P^{T}$ is connected. Let $H \in \mbb
R^{n(n-1)/2 \times n}$ be the incidence matrix of the\,\,complete graph and
assume that the algebraic connectivity of the lossless coupling is larger
than a critical\,value,\,i.e.,
\begin{equation}
  \label{eq: key-assumption - Kuramoto - 2}
  \lambda_{2}(L(P_{ij}\cos(\varphi_{ij}))) > \subscr{\lambda}{critical} := 
  \frac{\norm{H D^{-1} \omega}_{2} 
  + 
  \sqrt{n} \, \Big|\Big|\Big[ \sum\nolimits_{j=1}^{n}
  \frac{P_{1j}}{D_{1}}\sin(\varphi_{1j}), \dots,
  \sum\nolimits_{j=1}^{n} \frac{P_{nj}}{D_{n}}\sin(\varphi_{nj})
  \Big]\Big|\Big| _{2} }
  {\cos(\subscr{\varphi}{max}) (\kappa/n) \alpha/\max\nolimits_{i\neq j} \{ D_{i} D_{j} \}}
  \,,
\end{equation}
where $\kappa := \sum_{k=1}^{n} D_{k}$ and $\alpha := \sqrt{
  \min\nolimits_{i \neq j} \{D_{i} D_{j} \} / \max\nolimits_{i \neq j}
  \{D_{i} D_{j} \} }$.

Accordingly, define $\subscr{\gamma}{max} \in
{]\pi/2-\subscr{\varphi}{max},\pi]}$ and $\subscr{\gamma}{min} \in
{[0,\pi/2-\subscr{\varphi}{max}[}$ to be the unique solutions to the
equations $\sinc(\subscr{\gamma}{max})/\sinc(\pi/2-\subscr{\varphi}{max}) =
\sin(\subscr{\gamma}{min}) / \cos(\subscr{\varphi}{max}) =
\subscr{\lambda}{critical}/\lambda_{2}(L(P_{ij} \cos(\varphi_{ij})))$. Then,

%

\begin{enumerate}	
\item {\bf phase cohesiveness:} the set $\{\theta \in \Delta(\pi):\, \norm{H\theta}_{2} \leq \gamma \}$ is positively invariant for every $\gamma \in [\subscr{\gamma}{min},\alpha\subscr{\gamma}{max}]$, and each trajectory starting in $\{\theta \in \Delta(\pi):\, \norm{H\theta(0)}_{2} < \alpha \subscr{\gamma}{max}\}$ reaches $\{\theta \in \Delta(\pi):\,\norm{H\theta}_{2} \leq \subscr{\gamma}{min} \}$; and
  
\item {\bf frequency synchronization:} for every $\theta(0) \in \Delta(\pi)$ with $\norm{H\theta(0)}_{2}
< \alpha \subscr{\gamma}{max}$ the frequencies $\dot{\theta}_{i}(t)$
  synchronize exponentially to some frequency $\dot{\theta}_{\infty} \in
  [\subscr{\dot{\theta}}{min}(0) ,
  \subscr{\dot{\theta}}{max}(0)]$. Moreover, if $\varphi_{\max} = 0$, then
  $\dot{\theta}_{\infty} = \Omega$ and the exponential synchronization rate
  is no worse than $\subscr{\lambda}{fe}$ as defined in equation \eqref{eq:
    rate lambda_fe}.
\end{enumerate}
\end{theorem}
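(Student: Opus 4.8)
The plan is to run a Lyapunov analysis with the weighted disagreement function $\mc W$ in \eqref{eq: mathcal W(theta)}, working on the set $\Delta(\pi)$ where all pairwise geodesic distances coincide with honest differences and are smooth. First I would compute the Lie derivative of $\mc W$ along the non-uniform Kuramoto dynamics. Writing $u_i := \sum_{j} D_j(\theta_i-\theta_j) = \kappa\theta_i - \sum_j D_j\theta_j$ (a translation-invariant quantity with $u_i-u_j = \kappa(\theta_i-\theta_j)$), the $D_iD_j$-weighting is designed precisely so that the damping constants cancel and one obtains $\dot{\mc W} = \sum_i u_i\,\bigl(\omega_i - \sum_k P_{ik}\sin(\theta_i-\theta_k+\varphi_{ik})\bigr)$. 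Expanding the sine as in \eqref{eq: non-uniform Kuramoto model - expanded} then splits $\dot{\mc W}$ into three pieces: a dissipative lossless-coupling term, a natural-frequency mismatch term, and a lossy-coupling term.

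The core estimate is the dissipative term. Symmetrizing with $u_i-u_j=\kappa(\theta_i-\theta_j)$ turns it into $-\tfrac{\kappa}{2}\sum_{i,k}P_{ik}\cos(\varphi_{ik})\,(\theta_i-\theta_k)\sin(\theta_i-\theta_k)$, and since $(\theta_i-\theta_k)\sin(\theta_i-\theta_k)=|\theta_i-\theta_k|^2\sinc(|\theta_i-\theta_k|)$ with $\sinc$ decreasing on $[0,\pi]$, on any sublevel set with maximal distance at most $\gamma$ this is bounded above by $-\tfrac{\kappa}{2}\sinc(\gamma)\sum_{i,k}P_{ik}\cos(\varphi_{ik})|\theta_i-\theta_k|^2$. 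A Courant--Fischer / Laplacian argument (as in the proof of Theorem \ref{Theorem: Frequency synchronization}) then lower-bounds the remaining quadratic form by $\lambda_2(L(P_{ij}\cos\varphi_{ij}))$ times $\norm{H\theta}_2^2/n$, using that on the complete graph $\|\theta_\perp\|^2$ is proportional to $\norm{H\theta}_2^2$. For the two disturbance terms I would symmetrize likewise and apply Cauchy--Schwarz in the $D_iD_j$-weighted inner product: the mismatch term produces the factor $\norm{HD^{-1}\omega}_2$, and bounding $|\cos(\theta_i-\theta_j)|\le1$ in the lossy term produces the vector $[\sum_j\tfrac{P_{ij}}{D_i}\sin\varphi_{ij}]_i$ with the $\sqrt n$ coming from a norm conversion; each disturbance is controlled by $\sqrt{\mc W}$.

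Combining yields $\dot{\mc W}\le -a(\gamma)\,\mc W + b\sqrt{\mc W}$ on $\{\norm{H\theta}_2\le\gamma\}$. To turn this into the stated positive invariance I would bridge $\mc W$-sublevel sets and $\norm{H\theta}_2$-balls through $\tfrac14\min\{D_iD_j\}\norm{H\theta}_2^2\le\mc W\le\tfrac14\max\{D_iD_j\}\norm{H\theta}_2^2$, which gives exactly $\alpha=\sqrt{\min\{D_iD_j\}/\max\{D_iD_j\}}$ and the nested inclusions $\{\norm{H\theta}_2\le\alpha\gamma\}\subseteq\{\mc W\le\tfrac14\min\{D_iD_j\}\gamma^2\}\subseteq\{\norm{H\theta}_2\le\gamma\}$. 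Requiring $\dot{\mc W}<0$ on the boundary of the middle set reduces, after substituting the constants, to the scalar inequality $\lambda_2(L(P_{ij}\cos\varphi_{ij}))>\subscr{\lambda}{critical}$ of \eqref{eq: key-assumption - Kuramoto - 2}, and the two crossover arc lengths at which the dissipation exactly balances the disturbance are the $\subscr{\gamma}{min},\subscr{\gamma}{max}$ defined in the statement. This establishes positive invariance of the sets in claim~1) and, since the derivative is strictly negative strictly between the thresholds, that every trajectory from $\norm{H\theta(0)}_2<\alpha\subscr{\gamma}{max}$ decreases $\mc W$ until it enters $\{\norm{H\theta}_2\le\subscr{\gamma}{min}\}$.

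Finally, since $\subscr{\gamma}{min}<\pi/2-\subscr{\varphi}{max}$, reaching $\{\norm{H\theta}_2\le\subscr{\gamma}{min}\}\subseteq\Delta(\pi/2-\subscr{\varphi}{max})$ together with its positive invariance supplies exactly the phase-cohesiveness hypothesis of Theorem \ref{Theorem: Frequency synchronization}; a connected graph has a globally reachable node, so claim~2) follows, with $\dot{\theta}_\infty=\Omega$ and rate $\subscr{\lambda}{fe}$ in the lossless case $\subscr{\varphi}{max}=0$ read off directly from that theorem. I expect the main obstacle to be the geometric bookkeeping in the third step: because the $D_iD_j$-weighting makes the $\mc W$ level sets ellipsoidal rather than spherical, the worst case of minimal dissipation and the worst case of maximal pairwise distance occur at different points of a level set, and reconciling them tightly is precisely what separates the $\sinc$-based region-of-attraction threshold $\subscr{\gamma}{max}$ from the $\sin$-based ultimate threshold $\subscr{\gamma}{min}$. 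Keeping the $\sinc$ monotonicity argument valid (which needs all distances to remain in $[0,\pi]$, i.e.\ the trajectory to stay in $\Delta(\pi)$) while still extracting the sharpest admissible constants is the delicate part. \oprocend
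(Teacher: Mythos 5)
Your proposal is correct and takes essentially the same route as the paper's proof: the same Lyapunov function $\mc W$, with your $u_i$-symmetrization reproducing exactly the diagonal identity of Lemma \ref{Lemma: Identity for dot W}, the same $\sinc(\rho)$ lower bound and the $\lambda_{2}/n$ estimate of Lemma \ref{Lemma: Lemma on bounding Hx}, the same Cauchy--Schwarz disturbance bounds (yielding the constants $\max_{i\neq j}\{D_iD_j\}$ and $\sqrt{n}$), and the same sandwich of norm balls between $\mc W$-sublevel sets producing $\alpha$ and the two thresholds $\subscr{\gamma}{min}$, $\subscr{\gamma}{max}$ before invoking Theorem \ref{Theorem: Frequency synchronization}. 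The only cosmetic differences are that the paper proves the damping cancellation by index manipulation rather than your $u_i$ trick, and cites Khalil's ultimate-boundedness theorem where you work with the scalar inequality $\dot{\mc W}\le -a(\gamma)\,\mc W+b\sqrt{\mc W}$.
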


\begin{remark}{\bf(Physical interpretation of Theorem \ref{Theorem: Synchronization Condition II}:)}
In condition \eqref{eq: key-assumption - Kuramoto - 2}, $\big|\!\big|\!\big[ \!\dots, \sum\nolimits_{j=1}^{n} \frac{P_{ij}}{D_{i}}\sin(\varphi_{ij}), \dots\! \big]\!\big|\!\big| _{2}$ is the two-norm of the vector containing the lossy coupling, $\norm{H D^{-1} \omega}_{2} \!=\! \norm{(\omega_{2}/D_{2} - \omega_{1}/D_{1},\dots)}_{2}$ corresponds to the non-uniformity in the natural frequencies, $\lambda_{2}(L(P_{ij} \cos(\varphi_{ij})))$ is the algebraic connectivity induced by the lossless coupling, $\cos(\subscr{\varphi}{max}) = \sin(\pi/2 - \subscr{\varphi}{max})$ reflects again the phase cohsiveness in $\Delta(\pi/2 - \subscr{\varphi}{max})$, and $(\kappa/n) \alpha/\max\nolimits_{i\neq j} \{ D_{i} D_{j} \}$ weights the non-uniformity in the time constants\,\,$D_{i}$. The gap in condition \eqref{eq: key-assumption - Kuramoto - 2} yields a again practical stability result determining the initial and ultimate phase cohesiveness. 
Condition \eqref{eq: key-assumption - Kuramoto - 2} can be extended to non-reduced power network models \cite{FD-FB:11d}.
\oprocend
\end{remark}

\begin{remark}
\label{Remark: 2nd Remark on Kuramoto bounds}
{\bf (Reduction of Theorem \ref{Theorem: Synchronization Condition II} to classic Kuramoto oscillators:)}
  For classic Kuramoto oscillators \eqref{eq: Kuramoto system}, condition \eqref{eq: key-assumption - Kuramoto - 2} reduces to 
  $K > \subscr{K}{critical}^{*} := \norm{H\omega}_{2}$, which is a more conservative bound\,\,than
  $K > \subscr{K}{critical} = \subscr{\omega}{max} - \subscr{\omega}{min}$
  presented in \eqref{eq: Kuramoto bound on coupling gain K}.
  It follows that the oscillators synchronize for $\norm{H \theta(0)}_{2} < \subscr{\gamma}{max}$ and are ultimately phase cohesive in $\norm{H\theta}_{2} \leq\subscr{\gamma}{min}$, where $\subscr{\gamma}{max} \in {]\pi/2,\pi]}$ and $\subscr{\gamma}{min} \in {[0,\pi/2[}$ are the unique solutions to 
  $(\pi/2) \sinc(\subscr{\gamma}{max}) = \sin(\subscr{\gamma}{min})  =  \subscr{K}{critical}^{*}/K$.
   The Lyapunov function $\mc W(\theta)$ reduces to the one used in \cite{NC-MWS:08,AJ-NM-MB:04}
and can also be used to prove \cite[Theorem 4.2]{NC-MWS:08} and \cite[Theorem 1]{AJ-NM-MB:04}.
  \oprocend
  \vspace{-2pt}
\end{remark}

Recall from Section \ref{Section: Introduction} that angular differences are well defined for $\theta \in \Delta(\pi)$.  Hence, for $\theta \in \Delta(\pi)$, the vector of phase differences is $H \theta = (\theta_{2}-\theta_{1},\dots) \!\in\! \mbb R^{n(n-1)/2}$, and the function $\mc W$ defined in \eqref{eq: mathcal W(theta)} can be rewritten as the function
$H\theta\mapsto W(H \theta)$ defined by
\begin{equation}
  \mc W(\theta)
  =
  \frac{1}{4} \sum\nolimits_{i=1}^{n}\sum\nolimits_{j=1}^{n} D_{i} D_{j}\, |\theta_{i}-\theta_{j}|^{2}
  =
  \frac{1}{2} (H\theta)^{T} \diag(D_{i}D_{j}) (H\theta)
  =: W(H\theta)
  \label{eq: W(Htheta)}
  \,.
\end{equation}
The derivative of $W(H\theta)$ along trajectories of system \eqref{eq: Non-uniform Kuramoto model - phase
  differences - in vector form} is then given by
\begin{multline}
  	\dot{W}(H\theta)  
	=
  	(H\theta)^{T}\diag(D_{i} D_{j})  H D^{-1} \omega
  	- (H\theta)^{T} \diag(D_{i} D_{j})  H X 
	\\
	- (H\theta)^{T} \diag(D_{i} D_{j})  H D^{-1} H^{T}\diag(P_{ij}\cos(\varphi_{ij})) \sinbf(H\theta)
	\label{eq: dot W(Htheta) non-simplified} \,.
\end{multline}
A component-wise analysis of the last term on the right-hand side of \eqref{eq: dot W(Htheta) non-simplified} yields a ``diagonal''\,\,simplification. 

\begin{lemma}\label{Lemma: Identity for dot W}
Let $P=P^{T} \in \mbb R^{n \times n}$, $\theta \in \Delta(\pi)$, and $\kappa := \sum_{k=1}^{n} D_{k}$. Then it holds\,\,that
\begin{equation}  
    (H\theta)^{T} \diag( D_{i} D_{j})H D^{-1} H^{T}\diag(P_{ij}\cos(\varphi_{ij}))
    \sinbf(H\theta) 
    =    
    \kappa (H\theta)^{T} \diag(P_{ij}\cos(\varphi_{ij})) \sinbf(H\theta) 
    \label{eq: identity for dot W}
    .
\end{equation}
\end{lemma}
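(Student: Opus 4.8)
The plan is to recognize that the awkward middle factor $H^{T}\diag(D_{i}D_{j})H$ is simply the weighted Laplacian $L(D_{i}D_{j})$ of the complete graph, and that this particular Laplacian has a rank-one-corrected closed form which collapses nicely after multiplication by $D^{-1}$. Write $d=(D_{1},\dots,D_{n})^{T}$ so that $D=\diag(d)$ and $\kappa=\fvec 1_{n}^{T} d$. Using the identity $L(w_{ij})=H^{T}\diag(w_{k})H$ recalled in the graph-theory preliminaries with $w_{ij}=D_{i}D_{j}$, a direct evaluation of the degree term $\sum_{j\neq m}D_{m}D_{j}=D_{m}(\kappa-D_{m})$ and the adjacency term $-D_{m}D_{k}$ yields
\begin{equation*}
  H^{T}\diag(D_{i}D_{j})H = L(D_{i}D_{j}) = \kappa D - d\,d^{T}.
\end{equation*}
This is the only genuinely computational step. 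Throughout, note that $\theta\in\Delta(\pi)$ guarantees that all pairwise differences, and hence $H\theta$ and $\sinbf(H\theta)$, are well-defined vectors in Euclidean space, so every manipulation below is legitimate linear algebra.

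Next I would regroup the bilinear form to isolate this factor. Using the symmetry of $\diag(D_{i}D_{j})$ and of $D^{-1}$, the entire left part of the product rewrites as
\begin{equation*}
  (H\theta)^{T}\diag(D_{i}D_{j})HD^{-1} = \bigl[D^{-1}H^{T}\diag(D_{i}D_{j})H\theta\bigr]^{T} = \bigl[D^{-1}(\kappa D - d\,d^{T})\theta\bigr]^{T}.
\end{equation*}
The crucial simplification is that $D^{-1}$ annihilates the damping weights: since $D^{-1}D=I$ and $D^{-1}d=\fvec 1_{n}$, one obtains $D^{-1}(\kappa D - d\,d^{T})\theta = \kappa\theta - (d^{T}\theta)\,\fvec 1_{n}$. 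Substituting this back gives
\begin{equation*}
  \text{LHS} = \bigl(\kappa\theta - (d^{T}\theta)\,\fvec 1_{n}\bigr)^{T} H^{T}\diag(P_{ij}\cos(\varphi_{ij}))\,\sinbf(H\theta).
\end{equation*}

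Finally, I would discard the spurious $(d^{T}\theta)\,\fvec 1_{n}$ term using the defining property of the incidence matrix, $H\fvec 1_{n}=\fvec 0_{n}$ (equivalently $\spano{\fvec 1_{n}}\subseteq\ker(H)$): its contribution is $(d^{T}\theta)\,\fvec 1_{n}^{T}H^{T}\diag(P_{ij}\cos(\varphi_{ij}))\sinbf(H\theta) = (d^{T}\theta)\,(H\fvec 1_{n})^{T}\diag(P_{ij}\cos(\varphi_{ij}))\sinbf(H\theta)=0$. What survives is precisely $\kappa\,\theta^{T}H^{T}\diag(P_{ij}\cos(\varphi_{ij}))\sinbf(H\theta) = \kappa\,(H\theta)^{T}\diag(P_{ij}\cos(\varphi_{ij}))\sinbf(H\theta)$, the right-hand side of \eqref{eq: identity for dot W}. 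I expect the only real obstacle to be the bookkeeping in the first step—correctly identifying the edge-weighted Laplacian and its rank-one-corrected form $\kappa D - d\,d^{T}$—after which the cancellation driven by $D^{-1}d=\fvec 1_{n}$ and $H\fvec 1_{n}=\fvec 0_{n}$ is immediate and uses nothing beyond symmetry of $P$ and well-definedness of differences on $\Delta(\pi)$. As a sanity check I would verify the case $n=2$, where $H^{T}\diag(D_{i}D_{j})HD^{-1}H^{T}=(D_{1}+D_{2})H^{T}$ directly, matching $\kappa=D_{1}+D_{2}$; it is also worth remarking that the argument never touches the $\sinbf$ structure of the rightmost vector, so the identity in fact holds with $\sinbf(H\theta)$ replaced by any vector in $\mbb R^{n(n-1)/2}$.
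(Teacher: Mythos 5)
Your proof is correct, and it takes a genuinely different route from the paper's. The paper argues component-wise: it expands the left-hand side as two triple sums over $i,j,k$, relabels indices in each (invoking the symmetry $P=P^{T}$), and adds them so that the factors $(\theta_{i}-\theta_{k})+(\theta_{k}-\theta_{j})$ telescope to $(\theta_{i}-\theta_{j})$, yielding $\sum_{i,j,k}\bigl(P_{ij}\cos(\varphi_{ij})D_{k}\bigr)(\theta_{i}-\theta_{j})\sin(\theta_{i}-\theta_{j})$, i.e., the right-hand side. You instead argue structurally: you identify $H^{T}\diag(D_{i}D_{j})H$ as the complete-graph Laplacian and compute its closed form $\kappa D-d\,d^{T}$ (correct: degrees $D_{m}(\kappa-D_{m})$, off-diagonal entries $-D_{m}D_{k}$), after which the cancellations $D^{-1}d=\fvec 1_{n}$ and $H\fvec 1_{n}=\fvec 0_{n}$ finish the argument. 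Both proofs are complete, but yours buys strictly more: as you note, it actually establishes the operator identity $H^{T}\diag(D_{i}D_{j})HD^{-1}H^{T}=\kappa H^{T}$, so the lemma holds with $\sinbf(H\theta)$ replaced by an arbitrary vector in $\mbb R^{n(n-1)/2}$, and it makes transparent \emph{why} the damping non-uniformity disappears: the rank-one defect $d\,d^{T}$ is annihilated because $D^{-1}d$ lies in $\kero{H}$. The paper's computation is more elementary (no Laplacian closed form needed) but more opaque, tied to the specific quadratic form, and uses $P_{ji}=P_{ij}$ explicitly in the relabeling, whereas in your argument the symmetry of $P$ enters only through the edge-indexed weights $\diag(P_{ij}\cos(\varphi_{ij}))$ being well defined. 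Your handling of the $\Delta(\pi)$ hypothesis---lifting so that $H\theta$ is a legitimate Euclidean vector, with the residual ambiguity $\theta\mapsto\theta+\alpha\fvec 1_{n}$ harmless since only $H\theta$ survives---matches the paper's own framing preceding equation \eqref{eq: W(Htheta)}.
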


\begin{proof}
The left-hand side of equation \eqref{eq: identity for dot W} reads component-wise as 
\begin{equation*}
  	\sum\nolimits_{i} \sum\nolimits_{j} \sum\nolimits_{k} (\theta_{i}-\theta_{j}) (P_{ik}\cos(\varphi_{ik}) D_{j})  \sin(\theta_{i}-\theta_{k}) 
	\;+\;
	\sum\nolimits_{i} \sum\nolimits_{j} \sum\nolimits_{k} (\theta_{i}-\theta_{j})  (P_{jk}\cos(\varphi_{jk}) D_{i}) \sin(\theta_{k}-\theta_{j})
 	\,,
\end{equation*}
where all indices satisfy $i,j,k \in \until n$. An manipulation of the indices in both sums yields
\begin{equation*}  
	\sum\nolimits_{i} \sum\nolimits_{k} \sum\nolimits_{j} (\theta_{i}-\theta_{k}) (P_{ij}\cos(\varphi_{ij}) D_{k})  \sin(\theta_{i}-\theta_{j}) 
	\;+\;
	\sum\nolimits_{k} \sum\nolimits_{j} \sum\nolimits_{i} (\theta_{k}-\theta_{j})  (P_{ij}\cos(\varphi_{ij}) D_{k}) \sin(\theta_{i}-\theta_{j})
	\,.
\end{equation*}
Finally, the two sums can be added and simplify to
$
	  \sum\nolimits_{i} \sum\nolimits_{k} \sum\nolimits_{j} (P_{ij} \cos(\varphi_{ij}) D_{k})  (\theta_{i}-\theta_{j}) \sin(\theta_{i}-\theta_{j})
$,
which equals the right-hand side of equation \eqref{eq: identity for dot W} written in components.
\end{proof}
The following lemma will help us to upper-bound the derivative $\dot W(H \theta)$ by the algebraic connectivity.

\begin{lemma}\label{Lemma: Lemma on bounding Hx}
Consider a connected graph with $n$ nodes induced by $A \!=\! A^{T} \!\in\! \mbb R^{n \times n}$ with incidence matrix $B$ and Laplacian $L(A_{ij})$. For any $x \in \mbb R^{n}$, it holds that
$
(Bx)^{T} \diag(A_{ij}) (Bx) \geq (\lambda_{2}(L(A_{ij}))/n) \norm{Bx}_{2}^{2}
\,.
$
\end{lemma}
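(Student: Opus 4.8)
The plan is to rewrite both sides as quadratic forms in $x$ and compare them on the subspace orthogonal to $\fvec 1$, where the relevant Laplacians are positive definite. First I would record the two identities $(Bx)^{T}\diag(A_{ij})(Bx) = x^{T}L(A_{ij})\,x$ and $\norm{Bx}_{2}^{2} = x^{T}(B^{T}B)\,x$, using the relation $L(A_{ij}) = B^{T}\diag(A_{ij})B$ from the graph-theory preliminaries; here $L_{0} := B^{T}B$ is precisely the \emph{unweighted} Laplacian of the same graph. Both $L(A_{ij})$ and $L_{0}$ are symmetric and positive semidefinite, and both annihilate $\fvec 1$.

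Next I would decompose $x = (\fvec 1^{T}x/n)\,\fvec 1 + x_{\perp}$ with $x_{\perp}\perp\fvec 1$. Since $B\fvec 1 = 0$, neither quadratic form sees the component along $\fvec 1$, so it suffices to compare $x_{\perp}^{T}L(A_{ij})x_{\perp}$ with $x_{\perp}^{T}L_{0}x_{\perp}$. Because the graph induced by $A$ is connected, $\ker(L(A_{ij})) = \spano{\fvec 1}$, and the Courant-Fischer theorem yields the lower bound $x_{\perp}^{T}L(A_{ij})x_{\perp} \geq \lambda_{2}(L(A_{ij}))\,\norm{x_{\perp}}_{2}^{2}$.

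It then remains to bound $\norm{Bx}_{2}^{2}$ from above by the same quantity $\norm{x_{\perp}}_{2}^{2}$. The key step is the estimate $\norm{Bx}_{2}^{2} = x_{\perp}^{T}L_{0}x_{\perp} \leq \lambda_{\max}(L_{0})\,\norm{x_{\perp}}_{2}^{2} \leq n\,\norm{x_{\perp}}_{2}^{2}$, where I would invoke the standard fact that the largest eigenvalue of an unweighted graph Laplacian on $n$ nodes is at most $n$ (for the complete graph $L_{0} = nI - \fvec 1\fvec 1^{T}$, so this holds with equality on $\fvec 1^{\perp}$). Combining the two bounds gives $(Bx)^{T}\diag(A_{ij})(Bx) \geq \lambda_{2}(L(A_{ij}))\,\norm{x_{\perp}}_{2}^{2} \geq (\lambda_{2}(L(A_{ij}))/n)\,\norm{Bx}_{2}^{2}$, as claimed. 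I expect the only non-mechanical ingredient to be this upper estimate $\lambda_{\max}(L_{0}) \leq n$, which is exactly what converts the clean Rayleigh-quotient inequality on $\fvec 1^{\perp}$ into the stated factor $1/n$; everything else is the spectral decomposition and the vanishing of both forms on $\spano{\fvec 1}$.
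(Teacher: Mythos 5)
Your proof is correct, and its skeleton coincides with the paper's: both arguments observe that the two quadratic forms vanish on $\spano{\fvec 1_{n}}$, apply Courant--Fischer to the connected weighted Laplacian to get $x^{T}L(A_{ij})x \geq \lambda_{2}(L(A_{ij}))\norm{x_{\perp}}_{2}^{2}$, and then convert $\norm{x_{\perp}}_{2}^{2}$ into $\norm{Bx}_{2}^{2}/n$. The difference lies in how that factor $n$ is produced. You stay intrinsic to the given graph and cite the spectral bound $\lambda_{\max}(B^{T}B)=\lambda_{\max}(L_{0})\leq n$ for the unweighted Laplacian $L_{0}=B^{T}B$; the paper instead introduces the incidence matrix $H$ of the \emph{complete} graph, uses the exact identities $H^{T}H = nI_{n} - \fvec 1_{n}\fvec 1_{n}^{T}$ and $x_{\perp} = (1/n)H^{T}Hx$, together with the fact that $HH^{T}$ acts as $n$ times the identity on $\mathrm{range}(H)$, to compute $\norm{x_{\perp}}_{2}^{2} = (1/n)\norm{Hx}_{2}^{2}$ exactly, and finally drops edges via $\norm{Hx}_{2}^{2}\geq\norm{Bx}_{2}^{2}$. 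These are the same inequality in different clothing: the standard proof of $\lambda_{\max}(L_{0})\leq n$ for a non-complete graph writes $L_{0} + L(\bar{\mc G}) = nI_{n} - \fvec 1_{n}\fvec 1_{n}^{T}$ with the complement Laplacian $L(\bar{\mc G})\succeq 0$, which is precisely the paper's complete-graph detour in disguise. One small caveat: your parenthetical only verifies $\lambda_{\max}(L_{0})\leq n$ for the complete graph itself, so for a general connected graph you should either cite this fact explicitly or reproduce the complement argument; with that reference supplied, your version is slightly more economical than the paper's, while the paper's is self-contained and makes transparent that the factor $1/n$ is tight, with equality on $\fvec 1_{n}^{\perp}$ exactly in the complete uniformly weighted case.
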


\begin{proof}
Let $H$ be the incidence matrix of the complete graph. The Laplacian of the complete graph with uniform weights is then given by $(n \cdot I_{n} -  \fvec 1_{n} \fvec 1_{n}^{T}) = H^{T}H$, and the projection of $x \in \mbb R^{n}$ on the subspace orthogonal to $\fvec 1_{n}$ is $x_{\perp} = (I_{n} - (1/n) \fvec 1_{n} \fvec 1_{n}^{T}) x = (1/n) H^{T}H \, x$. Consider now the inequality
\begin{multline*}
	(Bx)^{T} \diag(A_{ij}) (Bx)
	=
	x^{T} B^{T} \diag(A_{ij}) B x
	=
	x^{T} L(A_{ij}) x
	\\
	\geq
	\lambda_{2}(L(A_{ij})) \norm{x_{\perp}}_{2}^{2}
	=
	\frac{\lambda_{2}(L(A_{ij}))}{n^{2}} \norm{ H^{T} H x}_{2}^{2}
	=
	\frac{\lambda_{2}(L(A_{ij}))}{n^{2}} (H x)^{T} H H^{T} (H x)
	\,.
\end{multline*}
In order to continue, first note that $H H^{T}$ and the complete graph's Laplacian $H^{T} H$ have the same eigenvalues, namely $n$ and $0$. Second, $\mathrm{range}(H)$ and $\mathrm{ker}(H^{T})$ are orthogonal complements. It follows that
$
(H x)^{T} H H^{T} (Hx)
=
n \norm{H x}_{2}^{2}
$.
Finally, note that $\norm{H x}_{2}^{2} \geq \norm{B x}_{2}^{2}$ and the lemma follows.
\end{proof}

Given Lemma \ref{Lemma: Identity for dot W} and Lemma \ref{Lemma: Lemma on bounding Hx} about the time derivative of
$W(H\theta)$, we are now in a position to prove Theorem \ref{Theorem: Synchronization Condition II} via standard Lyapunov and ultimate
boundedness arguments.

{\itshape Proof of Theorem \ref{Theorem: Synchronization Condition II}: }
Assume that $\theta(0) \in \mc S(\rho) := \{\theta \in \Delta(\pi):\, \norm{H\theta}_{2} \leq \rho \} $ for some $\rho \in ]0,\pi[$. In the following, we will show under which conditions and for which values of $\rho$ the set $\mc S(\rho)$ is positively invariant.
For $\theta \in \mc S(\rho)$ and since $\norm{H\theta}_{\infty} \leq \norm{H\theta}_{2}$, it follows that $\theta \in \bar\Delta(\rho)$ and
$1 \geq \sinc(\theta_{i} - \theta_{j}) \geq \sinc(\rho)$. Thus, for $\theta \in \mc S(\rho)$, the inequality
$(\theta_{i} - \theta_{j}) \sin(\theta_{i} - \theta_{j}) \!\geq\! (\theta_{i} - \theta_{j})^{2} \sinc(\rho)$ and Lemma \ref{Lemma: Identity for dot W} 
yield an upper bound on the right-hand side of \eqref{eq: dot W(Htheta) non-simplified}:
\begin{equation*}
  \dot{W}(H\theta)
  \leq
  (H\theta)^{T}\! \diag(D_{i} D_{j})  H D^{-1} \omega
  - (H\theta)^{T}\! \diag(D_{i}D_{j})  H X 
  -  \kappa \sinc(\rho)  (H\theta)^{T}\! \diag(P_{ij}\cos(\varphi_{ij})) (H\theta)
  \,.
\end{equation*}
Note that $\norm{HX}_{2}$ is lower bounded as
\begin{equation*}
	\norm{HX}_{2}
	=
	\sqrt{X^{T} H^{T} H X}
	\geq
	\sqrt{\subscr{\lambda}{max}(H^{T}H)} \norm{X}_{2}
	\geq
	\sqrt{n} \, \Big|\Big|\Big[ \dots, \sum\nolimits_{j} \frac{P_{ij}}{D_{i}}\sin(\varphi_{ij}), \dots \Big]\Big|\Big| _{2}
	=: \tilde X
	\,.
\end{equation*}
This lower bound $\tilde X$ together with Lemma \ref{Lemma: Lemma on bounding Hx} leads to the following upper bound on $\dot W(H\theta)$:
\begin{equation}
  \dot{W}(H\theta)
  \leq
  \norm{H\theta}_{2} \max\nolimits_{i\neq j} \{D_{i} D_{j} \}\big( \norm{H D^{-1} \omega}_{2} + \tilde X \big)
  -  
  (\kappa/n) \sinc(\rho) \lambda_{2}(L(P_{ij} \cos(\varphi_{ij}) ) \norm{H\theta}_{2}^{2}
  \label{eq: dot W(Htheta) simplified}
  \,.
\end{equation}
Note that the right-hand side of \eqref{eq: dot W(Htheta) simplified} is
strictly negative for
\begin{equation*}
  \norm{H\theta}_{2} > 
  \mu_{c} := \frac{ \max\nolimits_{i \neq j} \{D_{i} D_{j} \} (\norm{H D^{-1} \omega}_{2} + \tilde X) }
  { (\kappa/n) \sinc(\rho) \lambda_{2}(L(P_{ij}\cos(\varphi_{ij})))} 
  \,.
\end{equation*}
In the following we apply standard Lyapunov and ISS 
arguments. Pick $\mu \in ]0,\rho[$. If
\begin{equation}
  \mu > \mu_{c} =
  \frac{ \max\nolimits_{i \neq j} \{D_{i} D_{j} \}(\norm{H D^{-1} \omega}_{2} + \tilde X) }
  { (\kappa/n) \sinc(\rho) \lambda_{2}(L(P_{ij}\cos(\varphi_{ij})))} 
  \label{eq: key-assumption - Kuramoto - lossless - used in proof}
  \,,
\end{equation}
then for all $\norm{H\theta}_{2} \in [\mu,\rho]$, 
the right-hand side of \eqref{eq: dot W(Htheta) simplified} is 
upper-bounded by
\begin{align*}
  \dot{W}(H\theta) &\leq - (1-(\mu_{c}/\mu)) \cdot (\kappa/n) \sinc(\rho) \lambda_{2}(L(P_{ij}\cos(\varphi_{ij})))
  \norm{H\theta}_{2}^{2} \,.
\end{align*}
Note that $W(H\theta)$ defined in \eqref{eq: W(Htheta)} can easily be upper and lower bounded by constants
multiplying $\norm{H\theta}_{2}^{2}$:
\begin{equation}
  \min\nolimits_{i \neq j} \{D_{i} D_{j} \} \norm{H\theta}_{2}^{2}
  \leq 
  2 \cdot W(H\theta) 
  \leq  
  \max\nolimits_{i \neq j} \{D_{i} D_{j} \} \norm{H\theta}_{2}^{2}
  \label{eq: bounds on W(theta) and norm(Htheta)}
  \,.
\end{equation}
To guarantee the ultimate boundedness of $H\theta$, two sublevel sets of
$W(H\theta)$ have to be fitted into 
$\{ H\theta: \norm{H\theta}_{2} \in [\mu,\rho] \} $ 
where $\dot W(H\theta)$ is strictly negative. 
This is possible 
if \cite[equation (4.41)]{HKK:02} 
\begin{equation}
  \mu 
  <
  \sqrt{ \min\nolimits_{i \neq j} \{D_{i} D_{j} \} / \max\nolimits_{i \neq j} \{D_{i} D_{j} \} } \cdot \rho
  =
  \alpha \rho
  \label{eq: Squeezing level sets into an interval}
  \,.
\end{equation}
Ultimate boundedness arguments \cite[Theorem 4.18]{HKK:02} imply
that, for every $\norm{H\theta(0)}_{2} \leq \alpha \rho$, there is\,\,$T \geq 0$ such that $\norm{H\theta(t)}_{2}$ is strictly decreasing for $t \in [0,T]$ and $\norm{H\theta(t)}_{2} \leq \mu/\alpha $ for all
$t\geq T$. 
If we choose $\mu = \alpha \gamma$ with $\gamma \in ]0,\pi/2 - \subscr{\varphi}{max}]$, then equation \eqref{eq: Squeezing level sets into an interval} reduces to $\rho > \gamma$
and \eqref{eq: key-assumption - Kuramoto - lossless - used in proof} reduces to the condition
\begin{equation}
 	\lambda_{2}(L(P_{ij}\cos(\varphi_{ij})))
  	>
   	\frac{\max\nolimits_{i \neq j} \{D_{i} D_{j} \}(\norm{H D^{-1} \omega}_{2} + \tilde X)}{\alpha \gamma\, (\kappa/n)\sinc(\rho)} 
	=
	\subscr{\lambda}{critical} \frac{\cos(\subscr{\varphi}{max})}{\gamma \sinc(\rho)}
  	\label{eq: key-assumption - Kuramoto - lossless - used in proof 2}
	\,,
\end{equation}
where $\subscr{\lambda}{critical}$ is as defined in equation \eqref{eq: key-assumption - Kuramoto - 2}.
Now, we perform a final analysis of the bound \eqref{eq: key-assumption - Kuramoto - lossless - used in proof 2}. 
The right-hand side of \eqref{eq: key-assumption - Kuramoto - lossless - used in proof 2} is an increasing function of $\rho$ and decreasing function of $\gamma$ that diverges to $\infty$ as $\rho \uparrow \pi$ or $\gamma \downarrow 0$. Therefore, there exists some $(\rho,\gamma)$ in the convex set 
$\Lambda := \{ (\rho,\gamma) :\, \rho \in ]0,\pi[ \,,\;  \gamma \in ]0,\pi/2 - \subscr{\varphi}{max}] \,,\; \gamma < \rho \}$ 
satisfying equation \eqref{eq: key-assumption - Kuramoto - lossless - used in proof 2} if and only if equation \eqref{eq: key-assumption - Kuramoto - lossless - used in   proof 2} is true at $\rho = \gamma = \pi/2 - \subscr{\varphi}{max}$, where the right-hand side of \eqref{eq: key-assumption - Kuramoto - lossless - used in proof 2} achieves its infimum in $\Lambda$. The latter condition is equivalent to inequality \eqref{eq: key-assumption - Kuramoto - 2}.  Additionally, if these two equivalent statements are true, then there exists an open set of points in $\Lambda$ satisfying \eqref{eq: key-assumption - Kuramoto - lossless - used in proof 2} , which is bounded by the unique curve that satisfies equation \eqref{eq: key-assumption - Kuramoto - lossless - used in proof 2} with the equality sign, namely $f(\rho,\gamma) = 0$, where $f: \Lambda \to \mbb R$,
$f(\rho,\gamma) := \gamma \sinc(\rho)/ \cos(\subscr{\varphi}{max})  -  \subscr{\lambda}{critical}/\lambda_{2}(L(P_{ij} \cos(\varphi_{ij})))$. 
Consequently, for every $(\rho,\gamma) \in \{ (\rho,\gamma) \in \Lambda:\, f(\rho,\gamma) > 0 \}$, it follows for $\norm{H \theta(0)}_{2} \leq \alpha \rho$ that there is $T \geq 0$ such that $\norm{H\theta(t)}_{2} \leq \gamma$ for all $t \geq T$. The supremum value for $\rho$ is obviously given by $\subscr{\rho}{max} \in {]\pi/2-\subscr{\varphi}{max},\pi]}$ solving the equation $f(\subscr{\rho}{max},\pi/2-\subscr{\varphi}{max}) =0$ and the corresponding infimum of $\gamma$ by $\subscr{\gamma}{min} \in  {[0,\pi/2-\subscr{\varphi}{max}[}$ solving the equation $f(\subscr{\gamma}{min},\subscr{\gamma}{min})=0$. 

This proves statement 1) (where we replaced $\subscr{\rho}{max}$ by $\subscr{\gamma}{max}$) and shows that there is $T \geq 0$ such that $\norm{H\theta(t)}_{\infty} \!\leq\! \norm{H\theta(t)}_{2} < \pi/2 - \subscr{\varphi}{max}$ for all $t \geq T$. Statement 2) follows then from Theorem \ref{Theorem: Frequency synchronization}.
\hspace*{\fill}~\QED\par\endtrivlist\unskip
\subsection{Phase Synchronization}\label{Subsection: Phase Synchronization}

For identical natural frequencies and zero phase shifts, the practical stability results in Theorem \ref{Theorem: Synchronization Condition I} and Theorem \ref{Theorem: Synchronization Condition II} imply $\subscr{\gamma}{min} \downarrow 0$, i.e., phase synchronization of the non-uniform Kuramoto oscillators\,\,\eqref{eq: Non-uniform Kuramoto model}.

\begin{theorem}[Phase synchronization]\label{Theorem: Phase Synchronization}
Consider the non-uniform Kuramoto model \eqref{eq: Non-uniform Kuramoto model}, where the graph induced by $P$ has a globally reachable node, $\subscr{\varphi}{max} = 0$, and $\omega_{i}/D_{i} = \bar \omega$ for all $i \in \until n$.

Then for every $\theta(0) \in \bar\Delta(\gamma)$ with $\gamma \in {[0,\pi[}$,
\begin{enumerate}

	\item [1)] the phases $\theta_{i}(t)$ synchronize exponentially to $\theta_{\infty}(t) \in  [\subscr{{\theta}}{min}(0) , \subscr{{\theta}}{max}(0)] + \bar \omega t$, and

	\item [2)] if $P=P^{T}$, then the phases $\theta_{i}(t)$ synchronize exponentially to the weighted mean angle\footnote{This weighted average of angles is geometrically well defined for $\theta(0) \in \Delta(\pi)$.}
	$\theta_{\infty}(t) = \sum_{i} D_{i} \theta_{i}(0)/\sum_{i} D_{i} + \bar \omega t$ at a rate no worse than
  \begin{equation}     
    \subscr{\lambda}{ps} 
    = 
    - \lambda_{2}(L(P_{ij})) \sinc(\gamma) \cos(\angle(D\fvec 1,\fvec 1))^{2} \!/ \subscr{D}{max}
	\label{eq: rate lambda_ps}
    \,.
  \end{equation}

\end{enumerate}
\end{theorem}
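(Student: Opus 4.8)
The plan is to pass to a uniformly rotating frame that removes the common drift $\bar\omega$, establish phase cohesiveness, and then handle the two statements by parallel but distinct arguments: a time-varying consensus/contraction argument for the general digraph in 1), and a weighted-disagreement Lyapunov argument for the symmetric case in 2), the latter mirroring the rate derivation in Theorem~\ref{Theorem: Frequency synchronization}. First I would substitute $\psi_i(t) = \theta_i(t) - \bar\omega\, t$. Since $\omega_i/D_i = \bar\omega$ and $\subscr{\varphi}{max}=0$, the model \eqref{eq: Non-uniform Kuramoto model} becomes the zero-frequency system $\dot\psi_i = -\sum_{j} (P_{ij}/D_i)\sin(\psi_i-\psi_j)$, with $\psi_i-\psi_j = \theta_i-\theta_j$ unchanged. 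Phase synchronization of $\psi$ to a constant $\psi_\infty$ is then equivalent to the claimed synchronization of $\theta$ to $\psi_\infty + \bar\omega\, t$. Next I would establish phase cohesiveness, i.e.\ that $\bar\Delta(\gamma)$ is positively invariant for every $\gamma\in[0,\pi[$: this is the specialization of the Dini-derivative computation in the proof of Theorem~\ref{Theorem: Synchronization Condition I} to $\varphi_{ij}=0$ and identical $\omega_i/D_i$, where the cosine (lossy) terms vanish and the natural-frequency gap is zero, so that $D^{+}V(\theta(t)) = -\sum_k (a_{mk}\sin(\theta_m-\theta_k)+a_{\ell k}\sin(\theta_k-\theta_\ell)) \le 0$ because every angle difference lies in $[0,\gamma]\subset[0,\pi]$ and the sines are nonnegative.

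For statement 1), I would write the cohesive dynamics as a linear time-varying consensus protocol via the chord substitution $\sin(\psi_i-\psi_j) = \sinc(\psi_i-\psi_j)\,(\psi_i-\psi_j)$, obtaining $\dot\psi = -L(a_{ij}(t))\,\psi$ with $a_{ij}(t) = (P_{ij}/D_i)\,\sinc(\psi_i(t)-\psi_j(t))$. Because $\psi(t)\in\bar\Delta(\gamma)$ with $\gamma<\pi$, the factor obeys $\sinc(\gamma)\le\sinc(\psi_i-\psi_j)\le 1$, so $a_{ij}(t)$ is bounded, continuous, and strictly positive exactly on the edges of the graph induced by $P$, which has a globally reachable node. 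The contraction property \cite[Theorem~1]{LM:04-arxiv}, the same tool used in Theorem~\ref{Theorem: Frequency synchronization}, then yields exponential convergence of all $\psi_i(t)$ to a common value $\psi_\infty\in[\subscr{\theta}{min}(0),\subscr{\theta}{max}(0)]$, which is statement 1) after undoing the frame change.

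For statement 2) I would exploit symmetry. With $P=P^{T}$ and odd $\sin$, $\frac{d}{dt}\sum_i D_i\psi_i = -\sum_i\sum_j P_{ij}\sin(\psi_i-\psi_j)=0$, so the weighted mean $\bar\psi := \sum_i D_i\psi_i/\kappa$ is conserved and is the synchronization value. Introducing the weighted disagreement vector $\delta=\psi-\bar\psi\,\fvec 1_{n}$ (which satisfies $\fvec 1_{n}^{T} D\delta=0$) and the Lyapunov function $\delta^{T} D\delta$, a symmetrization of the double sum together with the chord bound $x\sin x\ge\sinc(\gamma)\,x^{2}$ for $|x|\le\gamma$ gives $\frac{d}{dt}\,\delta^{T} D\delta \le -2\,\sinc(\gamma)\,\delta^{T} L(P_{ij})\delta$. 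Since $L(P_{ij})\fvec 1_{n}=0$ one has $\delta^{T} L(P_{ij})\delta=\psi^{T} L(P_{ij})\psi$, and I would reuse verbatim the dihedral-angle estimate from the proof of Theorem~\ref{Theorem: Frequency synchronization}, namely $\delta^{T} L(P_{ij})\delta\ge\lambda_{2}(L(P_{ij}))\cos(\angle(D\fvec 1_{n},\fvec 1_{n}))^{2}\norm{\delta}_{2}^{2}$, which arises precisely because $\delta$ is orthogonal to $D\fvec 1_{n}$ rather than to $\fvec 1_{n}$. Combining with $\subscr{D}{min}\norm{\delta}_{2}^{2}\le\delta^{T} D\delta\le\subscr{D}{max}\norm{\delta}_{2}^{2}$ produces $\frac{d}{dt}\,\delta^{T} D\delta\le 2\,\subscr{\lambda}{ps}\,\delta^{T} D\delta$ with $\subscr{\lambda}{ps}$ as in \eqref{eq: rate lambda_ps}, and the Bellman--Gronwall lemma closes the estimate; note the exact parallel with \eqref{eq: rate lambda_fe}, where the coupling slope $\cos(\gamma)$ of the frequency dynamics is here replaced by the chord slope $\sinc(\gamma)$ of the phase dynamics.

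The main obstacle I anticipate is in statement 1): upgrading the merely non-increasing arc length from the cohesiveness step to genuine \emph{exponential} consensus. This hinges on the uniform positivity $a_{ij}(t)\ge\sinc(\gamma)\,P_{ij}/D_i>0$ of the edge weights---which degenerates as $\gamma\uparrow\pi$ since $\sinc(\pi)=0$---so one must keep $\gamma<\pi$ and verify that the induced time-varying graph inherits a uniformly globally reachable node, which is exactly the hypothesis under which \cite[Theorem~1]{LM:04-arxiv} certifies an exponential contraction rate. In the symmetric case 2) the only comparable subtlety is the non-orthogonality of $\delta$ to $\fvec 1_{n}$, but this is already resolved by the geometric gap argument imported from Theorem~\ref{Theorem: Frequency synchronization}.
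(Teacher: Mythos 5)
Your proposal is correct and follows essentially the same route as the paper's proof: positive invariance of $\bar\Delta(\gamma)$ via the Dini derivative of the arc-length function $V$ (all sines nonnegative once losses and frequency differences vanish), the rotating-frame reduction, the rewriting $\sin(x)=\sinc(x)\,x$ yielding a time-varying consensus protocol with weights $a_{ij}(t)=(P_{ij}/D_i)\sinc(\theta_i-\theta_j)\geq (P_{ij}/D_i)\sinc(\gamma)>0$ handled by Moreau's contraction property for statement 1), and, for statement 2), the conserved quantity $\sum_i D_i\theta_i$ together with the weighted-disagreement Lyapunov function, Courant--Fischer/dihedral-angle estimate, and Bellman--Gronwall argument imported from Theorem~\ref{Theorem: Frequency synchronization}, with $\cos(\gamma)$ replaced by $\sinc(\gamma)$. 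The only, immaterial, difference is that you pass to the rotating frame before establishing cohesiveness, whereas the paper does so afterwards.
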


The worst-case phase synchronization rate $\subscr{\lambda}{ps}$ can be interpreted similarly as the terms in \eqref{eq: rate lambda_fe}, where $\sinc(\gamma)$ corresponds to the initial phase cohesiveness in $\bar\Delta(\gamma)$. For classic Kuramoto oscillators \eqref{eq: Kuramoto system} statements 1) and 2) can be reduced to the Kuramoto results found in \cite{ZL-BF-MM:07} and Theorem 1 in \cite{AJ-NM-MB:04}.


{\itshape Proof of Theorem \ref{Theorem: Phase Synchronization}: }
First we proof statement 1). Consider again the Lyapunov function $V(\theta(t))$ from the proof of Theorem  \ref{Theorem: Synchronization Condition I}. The Dini derivative in the case $\subscr{\varphi}{max} = 0$ and $\omega_{i}/D_{i} = \bar \omega$ is simply
\begin{equation*}
	D^{+} V (\theta(t))
	=
	- \sum\nolimits_{k=1}^{n} \Big(
	\frac{P_{m k}}{D_{m}} \sin(\theta_{m}(t) - \theta_{k}(t)) 
	+ 
	\frac{P_{\ell k}}{D_{\ell}} \sin(\theta_{k}(t)-\theta_{\ell}(t)) 
	\Big)
	\,.
\end{equation*}
Both sinusoidal terms are positive for $\theta(t) \in \bar\Delta(\gamma)$, $\gamma \in {[0,\pi[}$. Thus, $V(\theta(t)$ is non-increasing, and $\bar\Delta(\gamma)$ is positively invariant. Therefore, the term $a_{ij}(t) = (P_{ij}/D_{i}) \sinc(\theta_{i}(t) - \theta_{j}(t))$  is strictly positive for all $t \geq 0$, and after changing to a rotating frame (via the coordinate transformation $\theta \mapsto \theta - \bar \omega\,t$) the non-uniform Kuramoto model \eqref{eq: Non-uniform Kuramoto model} can be written as the consensus time-varying consensus protocol
\begin{equation}
	\dot \theta_{i}(t) = - \sum\nolimits_{j=1}^{n} a_{ij}(t) (\theta_{i}(t) - \theta_{j}(t))
	\label{eq: consensus for phases}
	\,,
\end{equation}
Statement 1) follows directly along the lines of the proof of statement 1) in Theorem \ref{Theorem: Frequency synchronization}. In the case of symmetric coupling $P=P^{T}$, the phase dynamics \eqref{eq: consensus for phases} can be reformulated as a {\it symmetric} time-varying consensus protocol with strictly positive weights $w_{ij}(t) = P_{ij} \sinc(\theta_{i}(t) - \theta_{j}(t))$ and multiple rates $D_{i}$\,\,as
\begin{equation}
	\dt D \theta
	=
	- L(w_{ij}(t)) \, \theta
	\label{eq: LPV concensus system for dot theta - trivial phase shifts}
	\,,
\end{equation}
Statement 2) now follows directly along the lines of the proof of statement 2) in Theorem \ref{Theorem: Frequency synchronization}.
\footnote{The proof of Theorem \ref{Theorem: Synchronization Condition II} can be extended for $HD^{-1} \omega = \fvec 0$ and $X = \fvec 0$ to show statement 2) of Theorem \ref{Theorem: Phase Synchronization} with a slightly different worst-case synchronization frequency than \eqref{eq: rate lambda_ps}.}
\hspace*{\fill}~\QED\par\endtrivlist\unskip

The main result Theorem \ref{Theorem: Main Synchronization Result} can be proved now as a corollary of Theorem \ref{Theorem: Synchronization Condition I} and Theorem \ref{Theorem: singular perturbations}.

{\itshape Proof of Theorem \ref{Theorem: Main Synchronization Result}: }
The assumptions of Theorem \ref{Theorem: Main Synchronization Result} correspond exactly to the assumptions of Theorem \ref{Theorem: Synchronization Condition I} and statements 1) and 2) of Theorem \ref{Theorem: Main Synchronization Result} follow trivially from Theorem \ref{Theorem: Synchronization Condition I}.

Since the non-uniform Kuramoto model synchronizes exponentially and achieves phase cohesiveness in $\bar\Delta(\subscr{\gamma}{min}) \subsetneq \Delta(\pi/2 - \subscr{\varphi}{max})$, it follows from Lemma \ref{Lemma: Properties of grounded Kuramoto model} that the grounded non-uniform Kuramoto dynamics \eqref{eq: grounded Kuramoto model} converge exponentially to a stable fixed point $\delta_{\infty}$. Moreover, $\delta(0)=\groundedphases(\theta(0))$ is bounded and thus necessarily in a compact subset of the region of attraction of the fixed point $\delta_{\infty}$. Thus, the assumptions of Theorem \ref{Theorem: singular perturbations} are satisfied. Statements 3) and 4) of Theorem \ref{Theorem: Main Synchronization Result} follow from Theorem \ref{Theorem: singular perturbations}, where we made the following changes: the approximation errors \eqref{eq: singular perturbation error 1}-\eqref{eq: singular perturbation error 2} are expressed as the approximation\,\,errors \eqref{eq: approx-errors} in $\theta$-coordinates, we stated only the case $\epsilon < \epsilon^{*}$ and $t \geq t_{b}>0$, we reformulated $h(\bar \delta(t)) = D^{-1} Q(\bar\theta(t))$, and weakened the dependence of $\epsilon$ on $\Omega_{\delta}$ to a dependence on $\theta(0)$.
\hspace*{\fill}~\QED\par\endtrivlist\unskip


\section{Simulation Results}\label{Section: Simulation Results}

Figure \ref{Fig: Simulation} shows a simulation of the power network model \eqref{eq: Classical model} with $n=10$ generators and the corresponding non-uniform Kuramoto model \eqref{eq: Non-uniform Kuramoto model}, where all initial angles $\theta(0)$ are clustered with exception of the first one (red curves) and the initial frequencies are chosen as $\dot \theta(0) \in \mathrm{uni}(-0.1,0.1)$\,rad/s, i.e., randomly from a uniform distribution over $[-0.1,0.1]$. Additionally, at two-third of the simulation interval a transient high frequency disturbance is introduced at $\omega_{n-1}$ (yellow curve). For illustration, relative angular coordinates are defined as $\delta_{i}(t) = \theta_{i}(t) - \theta_{n}(t)$, $i \in \until{n-1}$. The system parameters\,\,satisfy $\omega_{i} \in \mathrm{uni}(0,10)$, $P_{ij}  \in \mathrm{uni}(0.7,1.2)$, and $\tan(\varphi_{ij})  \in \mathrm{uni}(0,0.25)$ matching data found in \cite{MAP:89,PMA-AAF:77,PK:94}. 

For the simulation in Figure \ref{Fig: underdamped case}, we chose $M_{i}  \in \mathrm{uni}(2,12) s/(2 \pi f_{0})$ and $D_{i}  \in \mathrm{uni}(20,30) s/(2 \pi f_{0})$ resulting in the rather large perturbation parameter $\epsilon = 0.58$\,s. The synchronization conditions of Theorem \ref{Theorem: Main Synchronization Result} are satisfied, and the angles $\bar \delta(t)$ of the non-uniform Kuramoto model synchronize very fast from the non-synchronized initial conditions (within $0.05$\,s), and the disturbance around $t=2$s does not severely affect the synchronization dynamics. The same findings hold for the quasi-steady state $h(\bar \delta)$ depicting the frequencies of the non-uniform Kuramoto model, where the the disturbance at angle $n-1$ (yellow curve) acts directly without being integrated. Since $\epsilon$ is large the power network trajectories $(\delta(t),\dot \theta(t))$ show the expected underdamped behavior and synchronize with second-order dynamics. As expected, the disturbance at $t=2$\,s does not affect the second order power network $\delta$-dynamics as much as the first-order non-uniform Kuramoto $\bar\delta$-dynamics. Nevertheless, after the initial and mid-simulation transients the singular perturbation errors $\delta(t) - \bar \delta(t)$ and $\theta(t) - h(\bar \delta(t))$ quickly become small and ultimately converge.

Figure \ref{Fig: overdamped case} shows the exact same simulation as in Figure \ref{Fig: underdamped case}, except that the simulation time is halved, the inertia are $M_{i}  \in \mathrm{uni}(2,6)s / (2 \pi f_{0})$, and the damping is chosen uniformly as $D_{i} \!=\! 30 s/(2 \pi f_{0})$, which gives the small perturbation parameter $\epsilon \!=\! 0.18$\,s. The resulting power network dynamics $(\delta(t),\dot \theta(t))$ are strongly damped (note the different time scales), and the non-uniform Kuramoto dynamics $\bar \delta(t)$ and the quasi-steady state $h(\bar \delta(t))$ have slower time constants. As expected, the singular perturbation errors remain smaller during transients and converge faster than in the weakly damped case in Figure\,\ref{Fig: underdamped case}.

\begin{figure}[t]
  \centering
  \subfigure[Weakly damped simulation with $\epsilon = 0.58$\,s]
  {
  \includegraphics[scale=0.49]{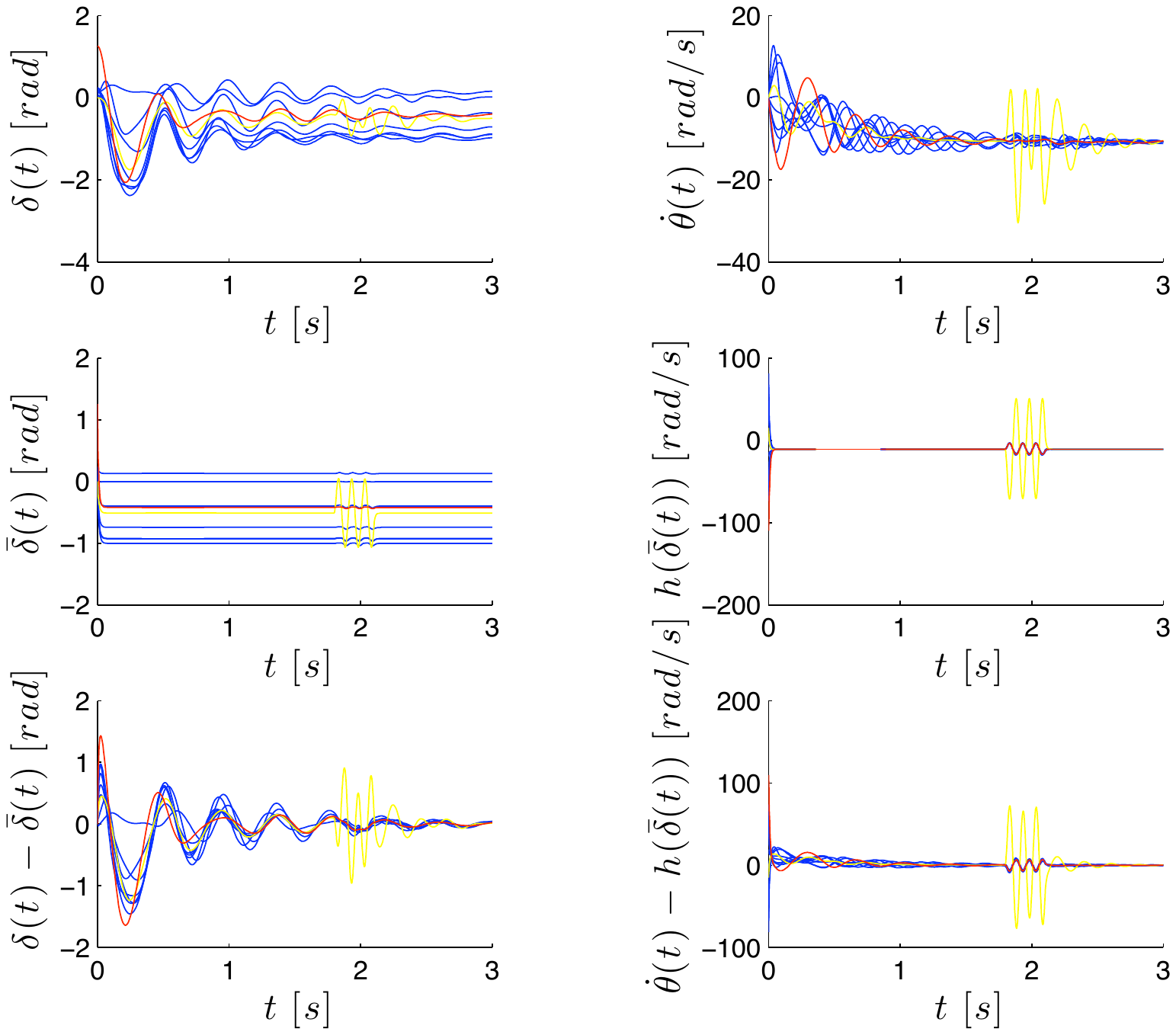}
    \label{Fig: underdamped case}
    }
   \hspace{0.35cm}
   \subfigure[Strongly damped simulation with $\epsilon = 0.18$\,s]
  {
  \includegraphics[scale=0.49]{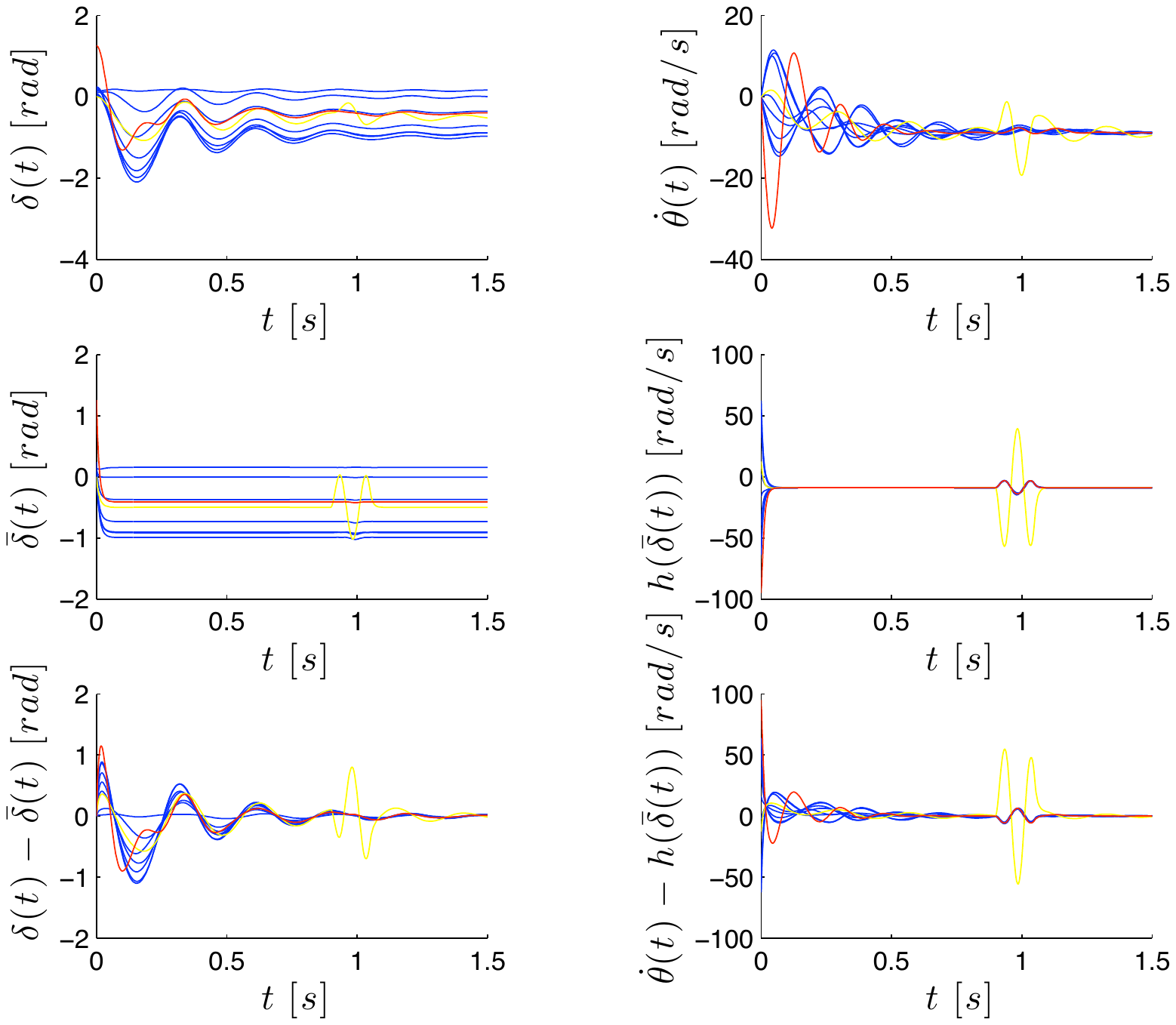}
    \label{Fig: overdamped case}
    }
    \caption{Simulation of the power network model \eqref{eq: Classical model} and the non-uniform Kuramoto model \eqref{eq: Non-uniform Kuramoto model}}
     \label{Fig: Simulation}
     \vspace{-10pt}
\end{figure}


\section{Conclusions}\label{Section: Conclusions}

This paper studied the synchronization and transient stability problem for
a power network. A novel approach leads to purely algebraic
conditions, under which a network-reduced power
system model is transiently stable depending on network parameters and
initial phase differences. Our technical approach is based on the
assumption that each generator is highly overdamped due to local excitation
control. The resulting singular perturbation analysis leads to the
successful marriage of transient stability in power networks, Kuramoto
oscillators, and consensus protocols. As a result, the transient stability analysis of a power network
model reduces to the synchronization analysis of non-uniform Kuramoto
oscillators. The study of generalized coupled oscillator models is an
interesting mathematical problem in its own right and was tackled by
combining and extending different techniques from all three mentioned
areas.

The presented approach to synchronization in power networks offers easily checkable conditions and an entirely new perspective on the transient stability problem. The authors are aware that the derived conditions are not yet competitive with the sophisticated numerical algorithms developed by the power systems community. To render our results applicable to real power systems, tighter synchronization conditions have to be developed, the region of attraction has to be characterized more accurately, and more realistic power network models have to be considered. The authors' ongoing work addresses the last point and extends the presented analysis to structure-preserving power network models. 

Finally, the revealed relationship between power networks, Kuramoto oscillators, and consensus algorithms gives rise to various exciting research directions at the interface between these areas.





\renewcommand{\baselinestretch}{1.261}

\bibliographystyle{./LatexVariables/IEEEtran}
\bibliography{alias,Main,FB}



\section{Appendix: Alternative Synchronization Conditions}\label{Appendix}

In this appendix we briefly comment on alternative bounding methods in the proof of Theorem \ref{Theorem: Synchronization Condition I} and how they affect the triplet of the synchronization condition \eqref{eq: key-assumption}, the
estimate for the region of attraction $\Delta(\subscr{\gamma}{max})$, and
the ultimate phase cohesive set $\bar\Delta(\subscr{\gamma}{min})$. We will state only the essential parts of the theorem statements and the corresponding proofs.

\subsection{Pairwise Bounding}

The proof of Theorem \ref{Theorem: Synchronization Condition I} can be continued from equation \eqref{eq: D+ with cosine terms} by bounding the right-hand side of equation \eqref{eq: D+ with cosine terms} for each single pair $\{m,\ell\}$ rather than for all $m,\ell \in \until n$. Such a pairwise bounding results in $n(n-1)/2$ pairwise synchronization conditions and the worst multiplicative gap over all conditions determines the estimates for the region of attraction $\Delta(\subscr{\gamma}{max})$ and the ultimate phase cohesive set $\bar\Delta(\subscr{\gamma}{min})$. In short, tighter bounds are traded off for complexity. The resulting theorem statement is as follows.

\begin{theorem}
\label{Theorem: Sync Condition I - Appendix - 2}
{\bf(Synchronization condition I)}
Consider the non-uniform Kuramoto-model \eqref{eq: Non-uniform Kuramoto
  model}, where the graph induced by $P=P^{T}$ is complete. Assume that the minimal lossless coupling of any oscillator pair $\{m,\ell\}$ to the network is
larger than a critical value, i.e., for every $m,\ell \in \until n$, $m \neq \ell$,
\begin{multline}
	\Gamma_{m\ell}
	:=
	\sum\limits_{k=1}^{n} \min_{i \in \{m,\ell\} \setminus\{k\} } \left\{ \frac{P_{i k}}{D_{i}} \cos(\varphi_{i k}) \right\}
	>\\
	\supscr{\Gamma_{m\ell}}{critical}
	:=
	\frac{1}{\cos(\subscr{\varphi}{max})} 
	\cdot \left(
	\left| \frac{\omega_{m}}{D_{m}} - \frac{\omega_{\ell}}{D_{\ell}} \right|
	+
	\sum\limits_{k=1}^{n} \left( \frac{P_{m k}}{D_{m}} \sin(\varphi_{mk}) + \frac{P_{\ell k}}{D_{\ell }} \sin(\varphi_{\ell k}) \right)
	\right)
  	\label{eq: key-assumption - Kuramoto - Appendix - 2}
  	\,.
\end{multline}
Accordingly, define $\subscr{\gamma}{min} \in [0,\pi/2 - \subscr{\varphi}{max}[$ and $\subscr{\gamma}{max} \in {]\pi/2,\pi]}$ as unique solutions to 
the equations $\sin(\subscr{\gamma}{min}) = \sin(\subscr{\gamma}{max}) = \cos(\subscr{\varphi}{max}) \max_{m,\ell} \{\supscr{\Gamma_{m\ell}}{critical} / \Gamma_{m\ell}\}$. Then \dots
\end{theorem}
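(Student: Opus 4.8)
The plan is to follow the proof of Theorem \ref{Theorem: Synchronization Condition I} verbatim through the expression \eqref{eq: D+ with cosine terms} for the upper Dini derivative $D^{+} V(\theta(t))$ of the non-smooth function $V(\psi) = \max\{|\psi_i - \psi_j|\}$, and to diverge only at the subsequent bounding step. Recall that whenever $V(\theta(t)) = \gamma$, the derivative equals $\dot\theta_m(t) - \dot\theta_\ell(t)$ for a single active pair $\{m,\ell\}$ with $m \in \subscr{I}{max}(\theta(t))$ and $\ell \in \subscr{I}{min}(\theta(t))$. In Theorem \ref{Theorem: Synchronization Condition I} this pair-specific expression was immediately relaxed by maximizing the natural-frequency difference and the lossy-coupling sum, and minimizing the lossless coupling, over all admissible pairs, which yields the single scalar condition \eqref{eq: key-assumption - Kuramoto}. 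Here I would instead retain the pair-specific quantities.

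First I would lower-bound the two sinusoidal coupling terms in \eqref{eq: D+ with cosine terms} exactly as before, via the identities $\sin x + \sin y = 2\sin(\frac{x+y}{2})\cos(\frac{x-y}{2})$ and $2\sin x\cos y = \sin(x-y)+\sin(x+y)$, to obtain the pair-specific sum $\sum_{k} \min_{i \in \{m,\ell\}\setminus\{k\}}\{a_{ik}\}\sin(\gamma) = \Gamma_{m\ell}\sin(\gamma)$, and lower-bound each cosine term by $-b_{mk}-b_{\ell k}$. This produces the estimate $D^{+}V(\theta(t)) \leq |\omega_m/D_m - \omega_\ell/D_\ell| - \Gamma_{m\ell}\sin(\gamma) + \sum_{k}(b_{mk}+b_{\ell k})$, which in the notation of \eqref{eq: key-assumption - Kuramoto - Appendix - 2} reads $D^{+}V(\theta(t)) \leq \cos(\subscr{\varphi}{max})\,\supscr{\Gamma_{m\ell}}{critical} - \Gamma_{m\ell}\sin(\gamma)$.

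The crux of the argument, and the only genuine departure from Theorem \ref{Theorem: Synchronization Condition I}, is that the active pair $\{m,\ell\}$ is not fixed: the index sets $\subscr{I}{max}$ and $\subscr{I}{min}$ are determined by the state and may switch as the trajectory evolves, so a priori any pair could attain the maximum defining $V$. Consequently, to guarantee that $V(\theta(t))$ never increases at any boundary configuration $V(\theta(t)) = \gamma$, I would demand that the pair-specific non-increase inequality $\Gamma_{m\ell}\sin(\gamma) \geq \cos(\subscr{\varphi}{max})\,\supscr{\Gamma_{m\ell}}{critical}$ hold \emph{simultaneously} for every pair $\{m,\ell\}$, equivalently $\sin(\gamma) \geq \cos(\subscr{\varphi}{max})\max_{m,\ell}\{\supscr{\Gamma_{m\ell}}{critical}/\Gamma_{m\ell}\}$. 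This is where the maximum over pairs enters, playing the role of the scalar gap $\subscr{\Gamma}{critical}/\subscr{\Gamma}{min}$ in Theorem \ref{Theorem: Synchronization Condition I}; the main obstacle to spell out carefully is precisely this reduction from ``holds for the instantaneously active pair'' to ``holds for all pairs,'' which is legitimate exactly because the active pair cannot be controlled and hence the worst pair must be accommodated uniformly in time.

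Finally, I would close exactly as in Theorem \ref{Theorem: Synchronization Condition I}: since $\gamma \mapsto \sin(\gamma)$ is strictly concave on $[0,\pi]$, an open interval of arc lengths $\gamma$ around $\pi/2 - \subscr{\varphi}{max}$ satisfies the uniform inequality if and only if it holds with strict sign at $\gamma = \pi/2 - \subscr{\varphi}{max}$, which is precisely the pairwise hypothesis \eqref{eq: key-assumption - Kuramoto - Appendix - 2} imposed for all $m \neq \ell$. The endpoints $\subscr{\gamma}{min} \in {[0,\pi/2 - \subscr{\varphi}{max}[}$ and $\subscr{\gamma}{max} \in {]\pi/2,\pi]}$ are then the two solutions of $\sin(\gamma) = \cos(\subscr{\varphi}{max})\max_{m,\ell}\{\supscr{\Gamma_{m\ell}}{critical}/\Gamma_{m\ell}\}$, giving positive invariance of $\bar\Delta(\gamma)$ for $\gamma \in [\subscr{\gamma}{min},\subscr{\gamma}{max}]$, strict decrease of $V$ on the open interval ${]\subscr{\gamma}{min},\subscr{\gamma}{max}[}$, and hence convergence of every trajectory from $\Delta(\subscr{\gamma}{max})$ into $\bar\Delta(\subscr{\gamma}{min})$; frequency synchronization then follows from Theorem \ref{Theorem: Frequency synchronization}.
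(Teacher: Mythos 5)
Your proposal is correct and follows essentially the same route as the paper's own proof: it retains the pair-specific Dini-derivative bound $D^{+}V \leq \cos(\subscr{\varphi}{max})\,\supscr{\Gamma_{m\ell}}{critical} - \Gamma_{m\ell}\sin(\gamma)$ from equation \eqref{eq: D+ with cosine terms}, imposes it uniformly over all $n(n-1)/2$ pairs because the active pair $\{m,\ell\}$ is state-dependent, and closes with the identical strict-concavity argument at $\gamma^{*} = \pi/2 - \subscr{\varphi}{max}$. Your only addition is to spell out explicitly the reduction from ``instantaneously active pair'' to ``all pairs,'' which the paper leaves implicit but clearly intends.
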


{\itshape Proof of Theorem  \ref{Theorem: Sync Condition I - Appendix - 2}: }
\dots [see proof of Theorem \ref{Theorem: Synchronization Condition I}]  \dots\\
In summary, $D^{+}V(\theta(t))$ in \eqref{eq: D+ with cosine terms} can be upper bounded by the simple expression
\begin{equation*}
	D^{+} V (\theta(t)) 
	\leq\, 
	\frac{\omega_{m}}{D_{m}} - \frac{\omega_{\ell}}{D_{\ell}} -
  	\sum\nolimits_{k=1}^{n}  
	\min_{i \in \{m,\ell\} \setminus \{k\}} \left\{ a_{ik}  \right\} \sin(\gamma)
	+
	\sum\nolimits_{k}
	b_{\ell k}
	+
	\sum\nolimits_{k}
	b_{m k}
	\,.
\end{equation*}
It follows that $V(\theta(t))$ is non-increasing for all $\theta(t) \in \bar\Delta(\gamma)$ and for all pairs $\{m,\ell\}$ if
\begin{equation}
	\Gamma_{m \ell} \sin(\gamma)
	\geq
	\cos(\subscr{\varphi}{max}) \supscr{\Gamma_{m\ell}}{critical}
	\label{eq: D+ inequality - Appendix - 2}
	\,,
\end{equation}
where $\Gamma_{m \ell}$ and $\supscr{\Gamma_{m \ell}}{critical}$ are defined in \eqref{eq: key-assumption - Kuramoto - Appendix - 2}.
The left-hand side of \eqref{eq: D+ inequality - Appendix - 2} is a strictly concave function of $\gamma \in [0,\pi]$. Thus, there exists an open set of arc lengths $\gamma$ including $\gamma^{*} = \pi/2-\subscr{\varphi}{max}$ satisfying equation \eqref{eq: D+ inequality - Appendix - 2} if and only if equation \eqref{eq: D+ inequality - Appendix - 2} is true at $\gamma^{*}=\pi/2-\subscr{\varphi}{max}$ with the strict inequality sign, which corresponds to condition \eqref{eq: key-assumption - Kuramoto - Appendix - 2} in the statement of Theorem \ref{Theorem: Sync Condition I - Appendix - 2}. Additionally, if these two equivalent statements are true, then $V(\theta(t))$ is non-increasing in $\bar\Delta(\gamma)$ for all $\gamma \in [\subscr{\gamma}{min},\subscr{\gamma}{max}]$, where $\subscr{\gamma}{min} \in {[0,\pi/2 - \subscr{\varphi}{max}[}$ and $\subscr{\gamma}{max} \in {]\pi/2,\pi]}$ satisify inequality \eqref{eq: D+ inequality - Appendix - 2} for all pairs $\{m,\ell\}$ \dots
\hspace*{\fill}~\QED\par\endtrivlist\unskip

\subsection{Pairwise Concavity-Based Bounding}

The bounding in the proof of Theorem \ref{Theorem: Synchronization Condition I} can be further tightened by cocavity-based arguments. This bounding results in $n(n-1)/2$ pairwise synchronization conditions and $n(n-1)$ equations determining the estimates for the region of attraction $\Delta(\subscr{\gamma}{max})$ and
the ultimate phase cohesive set $\bar\Delta(\subscr{\gamma}{min})$. Again, tighter bounds are traded off for increasing complexity. The resulting theorem statement is as follows.

\begin{theorem}
\label{Theorem: Sync Condition I - Appendix - 1}
{\bf(Synchronization condition I)}
Consider the non-uniform Kuramoto-model \eqref{eq: Non-uniform Kuramoto
  model}, where the graph induced by $P=P^{T}$ is complete. Assume that the minimal lossless coupling of any oscillator pair $\{m,\ell\}$ to the network is
larger than a critical value, i.e., for every $m,\ell \in \until n$, $m \neq \ell$,
\begin{equation}
	\sum\limits_{k=1}^{n} \min_{i \in \{m,\ell\}\setminus\{k\} } \!\left\{ \frac{P_{i k}}{D_{i}} \cos(\varphi_{i k} + \subscr{\varphi}{max}) \right\}
	>
	\supscr{\Gamma_{m \ell}}{critical}
	:=
	\left| \frac{\omega_{m}}{D_{m}} - \frac{\omega_{\ell}}{D_{\ell}} \right|
	+
	\max_{i \in \{m,\ell \}} \!\left\{ \sum\limits_{k=1}^{n} \frac{P_{i k}}{D_{i}} \sin(\varphi_{ik}) \right\}
  	\label{eq: key-assumption - Kuramoto - Appendix - 1}
  	\,.
\end{equation}
Accordingly, for every pair $\{m,l\}$ define $\subscr{\gamma}{min}^{m\ell} \in {[0, \pi/2-\subscr{\varphi}{max}[}$ and $\subscr{\gamma}{max}^{m \ell} \in {]\pi/2, \pi]}$ as   unique\,solutions\,to
\begin{equation}
	\sum\limits_{k=1}^{n} \min_{i \in \{m,\ell\}\setminus\{k\} } \!\left\{ \frac{P_{i k}}{D_{i}} \sin(\subscr{\gamma}{min}^{m \ell} - \varphi_{i k}) \right\}
	=
	\sum\limits_{k=1}^{n} \min_{i \in \{m,\ell\}\setminus\{k\} } \!\left\{ \frac{P_{i k}}{D_{i}} \sin(\subscr{\gamma}{max}^{m \ell} + \varphi_{i k}) \right\}
	=
	\supscr{\Gamma_{m \ell}}{critical}
	\label{eq: D+ inequality ** - Appendix - 1}
	\,,
\end{equation}
and let $\subscr{\gamma}{min} := \max_{m,\ell} \{\subscr{\gamma}{min}^{m\ell}\}$ and $\subscr{\gamma}{max} := \min_{m,\ell} \{\subscr{\gamma}{max}^{m \ell} \}$. Then \dots
\end{theorem}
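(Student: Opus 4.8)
The plan is to reuse the proof of Theorem~\ref{Theorem: Synchronization Condition I} unchanged up to the expression~\eqref{eq: D+ with cosine terms} for the upper Dini derivative $D^{+}V(\theta(t))$ of $V(\theta)=\max_{i,j}|\theta_i-\theta_j|$, evaluated at a time when $V(\theta(t))=\gamma$ with counterclockwise-extremal indices $m$ and $\ell$, so that the intermediate angles satisfy $\theta_k\in[\theta_\ell,\theta_m]$. Instead of the single worst-case coupling bound used there, I would first recombine the sine and cosine contributions of each index $k$ into the single term $g_k(\theta_k):=\frac{P_{mk}}{D_m}\sin(\theta_m-\theta_k+\varphi_{mk})+\frac{P_{\ell k}}{D_\ell}\sin(\theta_k-\theta_\ell-\varphi_{\ell k})$, giving $D^{+}V=\frac{\omega_m}{D_m}-\frac{\omega_\ell}{D_\ell}-\sum_k g_k(\theta_k)$ with each $\theta_k$ free to range over the arc of length $\gamma$.

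The key observation is that, as a function of the single variable $\theta_k$, $g_k$ is a single sinusoid, so $g_k''=-g_k$; hence the $\theta_k$-dependent contribution $-g_k$ to $D^{+}V$ has second derivative $+g_k$ and is convex wherever the net coupling $g_k$ is nonnegative, which is the case on $\bar\Delta(\gamma)$ for $\gamma$ in the range of interest. Consequently $D^{+}V$ is maximized over the admissible intermediate angles at the vertices $\theta_k\in\{\theta_\ell,\theta_m\}$, and it suffices to bound $\min\{g_k(\theta_m),g_k(\theta_\ell)\}$ term by term. Evaluating the endpoints gives $g_k(\theta_m)=\frac{P_{\ell k}}{D_\ell}\sin(\gamma-\varphi_{\ell k})+\frac{P_{mk}}{D_m}\sin\varphi_{mk}$ and $g_k(\theta_\ell)=\frac{P_{mk}}{D_m}\sin(\gamma+\varphi_{mk})-\frac{P_{\ell k}}{D_\ell}\sin\varphi_{\ell k}$. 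Using the monotonicity of $\sin(\gamma\pm\varphi)$—whose direction is governed by the sign of $\cos\gamma$, and which therefore splits the analysis into the regimes $\gamma<\pi/2$ (with argument $\gamma-\varphi_{ik}$) and $\gamma>\pi/2$ (with argument $\gamma+\varphi_{ik}$)—I would lower-bound both endpoint values by $\min_{i\in\{m,\ell\}\setminus\{k\}}\frac{P_{ik}}{D_i}\sin(\gamma\mp\varphi_{ik})$, noting that the only unavoidable lossy subtraction always comes attached to the fixed index $\ell$. Summing over $k$ then keeps this correction on a single index, so it is bounded by $\max_{i\in\{m,\ell\}}\sum_k\frac{P_{ik}}{D_i}\sin\varphi_{ik}$, and I obtain $D^{+}V\le\big|\frac{\omega_m}{D_m}-\frac{\omega_\ell}{D_\ell}\big|-\sum_k\min_i\frac{P_{ik}}{D_i}\sin(\gamma\mp\varphi_{ik})+\max_i\sum_k\frac{P_{ik}}{D_i}\sin\varphi_{ik}$.

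From here the argument parallels the main proof. The map $\gamma\mapsto\sum_k\min_i\frac{P_{ik}}{D_i}\sin(\gamma\mp\varphi_{ik})$ is concave on $[0,\pi]$, so the set of $\gamma$ making the pairwise bound nonpositive is an interval $[\subscr{\gamma}{min}^{m\ell},\subscr{\gamma}{max}^{m\ell}]$ whose endpoints solve~\eqref{eq: D+ inequality ** - Appendix - 1}; the existence of a nondegenerate such interval is equivalent to nonpositivity at $\gamma=\pi/2-\subscr{\varphi}{max}$, where $\sin(\gamma-\varphi_{ik})=\cos(\varphi_{ik}+\subscr{\varphi}{max})$, which is precisely condition~\eqref{eq: key-assumption - Kuramoto - Appendix - 1}. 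Setting $\subscr{\gamma}{min}=\max_{m,\ell}\subscr{\gamma}{min}^{m\ell}$ and $\subscr{\gamma}{max}=\min_{m,\ell}\subscr{\gamma}{max}^{m\ell}$ makes every pairwise derivative nonpositive at once, which yields positive invariance of $\bar\Delta(\gamma)$ for $\gamma\in[\subscr{\gamma}{min},\subscr{\gamma}{max}]$ and, because $D^{+}V$ is strictly negative on the open interval, convergence of every trajectory from $\Delta(\subscr{\gamma}{max})$ into $\bar\Delta(\subscr{\gamma}{min})\subset\Delta(\pi/2-\subscr{\varphi}{max})$. Frequency synchronization follows verbatim from Theorem~\ref{Theorem: Frequency synchronization}.

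I expect the main obstacle to be the rigorous justification of the endpoint-maximum step uniformly in $\gamma\in[0,\pi]$: the sinusoid $g_k$ can become negative near $\theta_k=\theta_\ell$, where its argument $\theta_k-\theta_\ell-\varphi_{\ell k}$ passes below zero, so the convexity of $-g_k$ in $\theta_k$ is not automatic and must be tied precisely to the positivity of the net coupling throughout $\bar\Delta(\gamma)$—equivalently, to a quantitative bound on $\gamma$ ensuring $g_k\ge0$ on the arc. The secondary careful point is the regime split at $\gamma=\pi/2$, which is what forces the two distinct defining equations $\sin(\subscr{\gamma}{min}-\varphi_{ik})$ and $\sin(\subscr{\gamma}{max}+\varphi_{ik})$ in~\eqref{eq: D+ inequality ** - Appendix - 1}.
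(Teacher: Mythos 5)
Your proposal follows essentially the same route as the paper's own proof of Theorem~\ref{Theorem: Sync Condition I - Appendix - 1}: your per-index recombination $g_k$ is exactly the paper's $f_k$, and the endpoint-minimum step for the single sinusoid, the two endpoint evaluations (the paper's $f_k^1$ and $f_k^2$), the combined lower bound with the lossy correction attached to one index, the regime split at $\gamma=\pi/2$ forcing the two defining equations in \eqref{eq: D+ inequality ** - Appendix - 1}, the evaluation at $\gamma^{*}=\pi/2-\subscr{\varphi}{max}$, and the final pairwise max/min all coincide with the paper's argument. The ``main obstacle'' you flag---rigorously tying the endpoint-minimum property to nonnegativity (equivalently concavity) of the sinusoid on the arc---is precisely the step the paper itself treats only with the brief remark that the two sines share a period and are shifted by strictly less than $\pi/2$, so your concern identifies the same looseness present in the original proof rather than a gap peculiar to your approach.
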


{\itshape Proof of Theorem  \ref{Theorem: Sync Condition I - Appendix - 1}: }
\dots [see proof of Theorem \ref{Theorem: Synchronization Condition I}]  \dots\\
Written out in components $D^{+} V (\theta(t))$ (in the non-expanded form \eqref{eq: Non-uniform Kuramoto model}) takes the form
\begin{equation}
	D^{+} V (\theta(t))
	=
	\frac{\omega_{m}}{D_{m}} - \frac{\omega_{\ell}}{D_{\ell}} - 
	\sum\limits_{k=1}^{n} 
	\frac{P_{m k}}{D_{m}} \sin(\theta_{m}(t) - \theta_{k}(t) + \varphi_{mk}) 
	+ 
	\frac{P_{\ell k}}{D_{\ell}} \sin(\theta_{k}(t)-\theta_{\ell}(t) - \varphi_{\ell k}) 
	\label{eq: D+ in components - Appendix - 1}
	\,.
\end{equation}
In the following we abbreviate the summand on the right-hand side of \eqref{eq: D+ in components - Appendix - 1} as
$
f_{k}(\theta_{k}(t)) 
:=
\frac{P_{m k}}{D_{m}} \sin(\theta_{m}(t) - \theta_{k}(t) + \varphi_{mk}) 
	+ 
\frac{P_{\ell k}}{D_{\ell}} \sin(\theta_{k}(t)-\theta_{\ell}(t) - \varphi_{\ell k}) 
$ 
and aim at a least conservative bounding of $f_{k}(\theta_{k}(t))$. There are different ways to continue from here -- the following approach is based on concavity. 

Since $f_{k}(\theta_{k})$ is the sum of two shifted {\it concave} sine functions of $\theta_{k} \in [\theta_{\ell},\theta_{m}]$ (with same period and shifted by strictly less than $\pi/2$), $f_{k}(\theta_{k})$ is again concave sine function of $\theta_{k} \in [\theta_{\ell},\theta_{m}]$ and necessarily achieves its minimum at the boundary $\theta_{k} \in \{\theta_{\ell},\theta_{m} \}$. 
If $\argmin_{\theta_{k} \in [\theta_{m},\theta_{\ell}]} f_{k}(\theta_{k}) = \theta_{\ell}$, then
\begin{equation*}
	f_{k}(\theta_{k}(t))
	\geq
	f_{k}^{1}(\gamma)  
	:=
	\frac{P_{m k}}{D_{m}} \sin(\gamma + \varphi_{m k}) 
	-
	\frac{P_{\ell k}}{D_{\ell}} \sin(\varphi_{\ell k})  	
	\,,
\end{equation*}
and otherwise for $\argmin_{\theta_{k} \in [\theta_{m},\theta_{\ell}]} f_{k}(\theta_{k}) = \theta_{m}$
\begin{equation*}
	f_{k}(\theta_{k}(t))
	\geq
	f_{k}^{2}(\gamma)  
	:=
	\frac{P_{\ell k}}{D_{\ell}} \sin(\gamma - \varphi_{\ell k}) 
	+ 
	\frac{P_{m k}}{D_{m}} \sin(\varphi_{mk})   	
	\,,
\end{equation*}
where $f_{k}^{1}$ and $f_{k}^{2}$ are functions from $[0,\pi]$ to $\mbb R$. In the following let $f_{k}^{3}:\, [0,\pi] \to \mbb R$ be defined by
\begin{equation*}
	f_{k}^{3}(\gamma)
	:=
	\min \left\{ \frac{P_{mk}}{D_{m}} \sin(\gamma + \varphi_{m k}) , \frac{P_{\ell k}}{D_{\ell}} \sin(\gamma - \varphi_{\ell k}) \right\}
	- 
	\frac{P_{\ell k}}{D_{\ell}} \sin(\varphi_{\ell k})   	
	\,.
\end{equation*}
Since $f_{k}(\theta(t)) \geq f_{k}^{3}(\gamma)$ for all $\theta_{k}(t) \in [\theta_{m}(t),\theta_{\ell}(t)]$, the derivative \eqref{eq: D+ in components - Appendix - 1} is upper-bounded by
\begin{equation*}
	D^{+} V (\theta(t))
	=
	 \frac{\omega_{m}}{D_{m}} - \frac{\omega_{\ell}}{D_{\ell}} 
	-
	\sum\nolimits_{k} f_{k}^{3}(\gamma)
	\,.
\end{equation*} 
It follows that $V(\theta(t))$ is non-increasing for all $\theta(t) \in \bar\Delta(\gamma)$ and for all pairs $\{m,\ell\}$ if for all $\{m,\ell\}$
\begin{equation}
	\sum\limits_{k=1}^{n} \min_{i \in \{m,\ell\} \setminus\{k\} } \!\left\{ \frac{P_{i k}}{D_{i}} \sin(\gamma + \varphi_{i k}) , \frac{P_{i k}}{D_{i}} \sin(\gamma - \varphi_{i k}) \right\}
	\geq
	\left| \frac{\omega_{m}}{D_{m}} - \frac{\omega_{\ell}}{D_{\ell}} \right|
	+
	\max_{i \in \{m,\ell \}} \!\left\{ \sum\limits_{k=1}^{n} \frac{P_{i k}}{D_{i}} \sin(\varphi_{ik}) \right\}
	\label{eq: D+ inequality - Appendix - 1}
	\,.
\end{equation}
Note that the minimizing summand on the left-hand side of \eqref{eq: D+ inequality - Appendix - 1} is $\min_{i \in \{m,\ell\} \setminus\{k\}} \!\left\{ P_{i k} \sin(\gamma - \varphi_{i k}) /D_{i} \right\}$ for $\gamma < \pi/2$, $\min_{i \in \{m,\ell\}\setminus\{k\} } \!\left\{ P_{i k} \sin(\gamma + \varphi_{i k}) /D_{i} \right\}$ for $\gamma > \pi/2$, and it achieves its maximum value $\min_{i \in \{m,\ell\}\setminus\{k\} } \!\left\{ P_{i k} \cos(\varphi_{i k}) /D_{i} \right\}$ for $\gamma = \pi/2$. In particular, there exists an open set of arc lengths $\gamma$ for including $\gamma^{*} = \pi/2 - \subscr{\varphi}{max}$ for which $V(\theta(t))$ is non-increasing in $\bar\Delta(\gamma)$ if and only if inequality \eqref{eq: D+ inequality - Appendix - 1} is strictly satisfied for $\gamma^{*} = \pi/2 - \subscr{\varphi}{max}$. In this case, define $\subscr{\gamma}{min}^{m\ell} \in {[0,\pi/2 - \subscr{\varphi}{max}[}$ and $\subscr{\gamma}{max}^{m\ell} \in {]\pi/2,\pi]}$ as the two unique solutions to equation \eqref{eq: D+ inequality - Appendix - 1} with equality sign, which is equivalent to equation \eqref{eq: D+ inequality ** - Appendix - 1}. Then $V(\theta(t))$ is non-increasing in $\bar\Delta(\gamma)$ for all $\gamma \in [\subscr{\gamma}{min}^{m\ell},\subscr{\gamma}{max}^{m\ell}]$. Finally define $\subscr{\gamma}{min}$ and $\subscr{\gamma}{max}$ as the maximum and minimum values of $\subscr{\gamma}{min}^{m\ell}$ and $\subscr{\gamma}{max}^{m\ell}$ over all pairs $\{m,\ell\}$ \dots
\hspace*{\fill}~\QED\par\endtrivlist\unskip

\subsection{Adding and Subtracting the Lossless Coupling}

The last possible bounding we explore is restricted to the set of initial conditions in $\Delta(\pi/2 - \subscr{\varphi}{max})$ and results in a simple, scalar, and intuitive but very conservative synchronization condition. Instead of bounding the right-hand side of $D^{+} V (\theta(t))$ directly, we add and subtract the lossless coupling in the proof of Theorem \ref{Theorem: Synchronization Condition I}. The resulting theorem statement is as follows.

\begin{theorem}
\label{Theorem: Sync Condition 1 - Appendix - 3}
{\bf(Synchronization condition I)}
Consider the non-uniform Kuramoto-model \eqref{eq: Non-uniform Kuramoto
  model}, where the graph induced by $P=P^{T}$ is complete. Assume that the minimal coupling is
larger than a critical value, i.e., for every $i,j \in \until n$
\begin{equation}
  \label{eq: key-assumption - Kuramoto - Appendix - 3}
  \subscr{P}{min} > \subscr{P}{critical} := 
  \frac{\subscr{D}{max}}{n\cos(\subscr{\varphi}{max})}
  \left(
    \max_{\{i,j\}} \Bigl| \frac{\omega_{i}}{D_{i}}  -
    \frac{\omega_{j}}{D_{j}}  \Bigr|
    +
    \max_i \sum\nolimits_{j=1}^{n} \frac{P_{ij}}{D_{i}}
    \sin(\varphi_{ij})
  \right)
  \,.
\end{equation}
Accordingly, define $\subscr{\gamma}{min} = \arcsin(\cos(\subscr{\varphi}{max})
{\subscr{P}{critical}}/{\subscr{P}{min}})$ taking value in
${[0,\pi/2-\subscr{\varphi}{max}[}$ and $\subscr{\gamma}{max} = \pi/2-\subscr{\varphi}{max}$. Then \dots 
\end{theorem}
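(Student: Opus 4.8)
The plan is to run the contraction argument of Theorem~\ref{Theorem: Synchronization Condition I} verbatim up to the computation of the upper Dini derivative of $V(\theta)=\max_{i,j}|\theta_i-\theta_j|$, and to replace only the subsequent bounding of the right-hand side of \eqref{eq: D+ with cosine terms}. As in that proof, I would fix $m\in\subscr{I}{max}(\theta(t))$ and $\ell\in\subscr{I}{min}(\theta(t))$, assume $V(\theta(t))=\gamma$ with $\gamma\in{[0,\pi/2-\subscr{\varphi}{max}]}$, and note that measuring distances counterclockwise gives $\theta_m-\theta_\ell=\gamma$ and $\theta_m-\theta_k,\theta_k-\theta_\ell\in[0,\gamma]\subseteq[0,\pi/2]$ for all $k$. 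The restriction $\gamma\le\pi/2-\subscr{\varphi}{max}$ is exactly what I will exploit: it forces every cosine $\cos(\theta_m-\theta_k)$ and $\cos(\theta_k-\theta_\ell)$ to be nonnegative, and keeps every shifted argument $\theta_m-\theta_k+\varphi_{mk}$, $\theta_k-\theta_\ell-\varphi_{\ell k}$ inside $[-\subscr{\varphi}{max},\pi/2]$, where the sine is monotone.

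First I would add and subtract, inside the coupling terms of \eqref{eq: D+ with cosine terms}, the lossless coupling obtained by setting the phase shifts to zero, i.e.\ replace the weight $P_{ik}\cos(\varphi_{ik})/D_i$ by $P_{ik}/D_i$ and collect the difference together with the cosine terms $b_{ik}=P_{ik}\sin(\varphi_{ik})/D_i$ into a single ``lossy residual.'' The lossless part $\sum_k[(P_{mk}/D_m)\sin(\theta_m-\theta_k)+(P_{\ell k}/D_\ell)\sin(\theta_k-\theta_\ell)]$ is then bounded below exactly as in the proof of Theorem~\ref{Theorem: Synchronization Condition I}: pulling out $\min_i\{P_{ik}/D_i\}\ge\subscr{P}{min}/\subscr{D}{max}$ and applying $\sin x+\sin y=2\sin(\tfrac{x+y}{2})\cos(\tfrac{x-y}{2})$ yields the clean bound $(n\,\subscr{P}{min}/\subscr{D}{max})\sin(\gamma)$. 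This is where the scalar, $\subscr{P}{min}$-based left-hand side of \eqref{eq: key-assumption - Kuramoto - Appendix - 3} comes from, and also why it is more conservative than the weighted $\subscr{\Gamma}{min}$ of the main theorem.

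The delicate step --- the one I expect to be the main obstacle --- is bounding the lossy residual above by the \emph{single} term $\max_{i}\sum_{j}(P_{ij}/D_i)\sin(\varphi_{ij})$, i.e.\ with the factor $1$ that distinguishes \eqref{eq: key-assumption - Kuramoto - Appendix - 3} from the factor $2$ in the main theorem. Here I would use sign information available only on $\Delta(\pi/2-\subscr{\varphi}{max})$: since $\cos(\theta_m-\theta_k)\ge0$, the cosine contribution attached to $m$ is favorable and can simply be discarded, as anticipated in the footnote to the proof of Theorem~\ref{Theorem: Synchronization Condition I}; only the term attached to $\ell$ survives and is bounded, using $\cos(\theta_k-\theta_\ell)\le1$, by $\sum_k(P_{\ell k}/D_\ell)\sin(\varphi_{\ell k})\le\max_i\sum_j(P_{ij}/D_i)\sin(\varphi_{ij})$. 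The real bookkeeping is to verify that the correction generated by the add-and-subtract step combines with these cosine terms so that exactly one copy of the lossy sum remains; this is precisely where nonnegativity of the intermediate sines and cosines is indispensable, and where the argument breaks if one tries to push $\gamma$ past $\pi/2$ --- which is why $\subscr{\gamma}{max}$ is capped at $\pi/2-\subscr{\varphi}{max}$ rather than lying in ${]\pi/2,\pi]}$.

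Combining the two bounds gives $D^{+}V(\theta(t))\le\max_{i\ne j}|\omega_i/D_i-\omega_j/D_j|-(n\,\subscr{P}{min}/\subscr{D}{max})\sin(\gamma)+\max_i\sum_j(P_{ij}/D_i)\sin(\varphi_{ij})$, so $V$ is non-increasing on $\bar\Delta(\gamma)$ whenever $(n\,\subscr{P}{min}/\subscr{D}{max})\sin(\gamma)$ dominates the remaining terms. Since $\sin$ is increasing on $[0,\pi/2-\subscr{\varphi}{max}]$, this holds on the whole interval $[\subscr{\gamma}{min},\pi/2-\subscr{\varphi}{max}]$ precisely under \eqref{eq: key-assumption - Kuramoto - Appendix - 3}, with $\subscr{\gamma}{min}=\arcsin(\cos(\subscr{\varphi}{max})\subscr{P}{critical}/\subscr{P}{min})$ and $\subscr{\gamma}{max}=\pi/2-\subscr{\varphi}{max}$, and is strict in the interior, giving positive invariance and convergence into $\bar\Delta(\subscr{\gamma}{min})$. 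Finally, because the contraction drives every trajectory starting in $\Delta(\pi/2-\subscr{\varphi}{max})$ into $\bar\Delta(\subscr{\gamma}{min})\subsetneq\Delta(\pi/2-\subscr{\varphi}{max})$ and the complete graph has a globally reachable node, frequency synchronization follows directly from Theorem~\ref{Theorem: Frequency synchronization}, exactly as in statement~2) of Theorem~\ref{Theorem: Synchronization Condition I}.
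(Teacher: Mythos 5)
Your overall architecture — contraction of $V(\theta)=\max_{i,j}|\theta_i-\theta_j|$, add-and-subtract of the lossless coupling, the scalar condition obtained at $\gamma^{*}=\pi/2-\subscr{\varphi}{max}$, the cap $\subscr{\gamma}{max}=\pi/2-\subscr{\varphi}{max}$, and the final appeal to Theorem~\ref{Theorem: Frequency synchronization} — coincides with the paper's proof of Theorem~\ref{Theorem: Sync Condition 1 - Appendix - 3} (which works from the non-expanded form \eqref{eq: D+ - Appendix - 3} rather than \eqref{eq: D+ with cosine terms}, an immaterial difference). Your treatment of the $m$-attached residual is also sound: for $\theta_m-\theta_k\in[0,\gamma]$ and $\gamma\le\pi/2-\subscr{\varphi}{max}$, monotonicity of the sine gives $\sin(\theta_m-\theta_k)-\sin(\theta_m-\theta_k+\varphi_{mk})\le 0$, so that piece may be discarded. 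But the step you yourself flag as the delicate one — extracting \emph{exactly one} copy of the lossy sum on the $\ell$ side — does not go through by sign information and $\cos(\theta_k-\theta_\ell)\le 1$ alone. After replacing $a_{\ell k}=P_{\ell k}\cos(\varphi_{\ell k})/D_\ell$ by $P_{\ell k}/D_\ell$, the $\ell$-attached residual per $k$ is not $b_{\ell k}\cos(\theta_\ell-\theta_k)$ but
\begin{equation*}
\frac{P_{\ell k}}{D_\ell}\Bigl[(1-\cos\varphi_{\ell k})\sin(\theta_k-\theta_\ell)+\sin(\varphi_{\ell k})\cos(\theta_k-\theta_\ell)\Bigr]
=\frac{P_{\ell k}}{D_\ell}\,2\sin\Bigl(\frac{\varphi_{\ell k}}{2}\Bigr)\cos\Bigl(\theta_k-\theta_\ell-\frac{\varphi_{\ell k}}{2}\Bigr)\,,
\end{equation*}
whose maximum over $\theta_k-\theta_\ell\in[0,\gamma]$ is attained at the \emph{interior} point $\theta_k-\theta_\ell=\varphi_{\ell k}/2$ (present whenever $\gamma\ge\varphi_{\ell k}/2$) and equals $2(P_{\ell k}/D_\ell)\sin(\varphi_{\ell k}/2)$, which strictly exceeds $b_{\ell k}=(P_{\ell k}/D_\ell)\sin(\varphi_{\ell k})$ for $\varphi_{\ell k}>0$. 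Your bound via $\cos(\theta_k-\theta_\ell)\le1$ only covers the $b_{\ell k}$ part and silently drops the positive term $(1-\cos\varphi_{\ell k})\sin(\theta_k-\theta_\ell)$ created by the add-and-subtract step; the discarded $m$-side slack cannot repair this in general (take $\varphi_{mk}=0$, so that slack vanishes while the excess persists). With the honest per-term bound $2\sin(\varphi_{\ell k}/2)$ you get a strictly larger critical coupling than $\subscr{P}{critical}$, so the theorem as stated does not follow from your bookkeeping.

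The paper closes precisely this hole with a device your proposal omits: it keeps the per-$k$ summand of \eqref{eq: D+ - Appendix - 3} intact and observes, as in the concavity argument of Theorem~\ref{Theorem: Sync Condition I - Appendix - 1}, that a sum of two equal-frequency sinusoids of $\theta_k$ is again a single sinusoid, convex on $[\theta_\ell,\theta_m]$ after the sign flip, hence maximized at the boundary $\theta_k\in\{\theta_\ell,\theta_m\}$. Evaluating at $\theta_k=\theta_\ell$ collapses the $\ell$-correction to exactly $2\sin(\varphi_{\ell k}/2)\cos(\varphi_{\ell k}/2)=\sin(\varphi_{\ell k})$ while the lossless part contributes $2\sin(\gamma/2)\cos(\gamma/2)=\sin(\gamma)$ — this boundary evaluation, not nonnegativity of the intermediate cosines, is where the factor one in \eqref{eq: key-assumption - Kuramoto - Appendix - 3} comes from. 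If you replace your sign-based bounding of the residual by this convexity/boundary-evaluation step, the rest of your proposal (the lossless lower bound $(n\subscr{P}{min}/\subscr{D}{max})\sin(\gamma)$, the monotonicity of $\sin$ on $[0,\pi/2-\subscr{\varphi}{max}]$ yielding $\subscr{\gamma}{min}=\arcsin(\cos(\subscr{\varphi}{max})\subscr{P}{critical}/\subscr{P}{min})$, and the concluding use of Theorem~\ref{Theorem: Frequency synchronization}) matches the paper and goes through.
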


{\itshape Proof of Theorem  \ref{Theorem: Sync Condition 1 - Appendix - 3}: }
\dots see proof of Theorem \ref{Theorem: Synchronization Condition I}  \dots\\
Written out in components (in the non-expanded form \eqref{eq: Non-uniform Kuramoto model}) $D^{+} V (\theta(t))$ takes the form
\begin{equation}
	D^{+} V (\theta(t))
	=
	\frac{\omega_{m}}{D_{m}} - \frac{\omega_{\ell}}{D_{\ell}} - 
	\sum\limits_{k=1}^{n} \left( 
	\frac{P_{m k}}{D_{m}} \sin(\theta_{m}(t) - \theta_{k}(t) + \varphi_{mk}) 
	- 
	\frac{P_{\ell k}}{D_{\ell}} \sin(\theta_{\ell}(t)-\theta_{k}(t) + \varphi_{\ell k}) 
	\right)
	\label{eq: D+ - Appendix - 3}
	\,.
\end{equation}
Adding and subtracting the coupling with zero phase shifts yields
\begin{align*}
	D^{+} V (\theta(t))
	=&\;
	\frac{\omega_{m}}{D_{m}} - \frac{\omega_{\ell}}{D_{\ell}} - 
	\sum\nolimits_{k=1}^{n} \Big(
	\frac{P_{m k}}{D_{m}} \sin(\theta_{m}(t) - \theta_{k}(t)) 
	+ 
	\frac{P_{\ell k}}{D_{\ell}} \sin(\theta_{k}(t)-\theta_{\ell}(t)) 
	\Big)
	\\
	&-
	\sum\nolimits_{k=1}^{n} \frac{P_{m k}}{D_{m}}  \Big(
	\sin(\theta_{m}(t) - \theta_{k}(t) + \varphi_{mk}) - \sin(\theta_{m}(t) - \theta_{k}(t)) 
	\Big)
	\\
	&+
	\sum\nolimits_{k=1}^{n} \frac{P_{\ell k}}{D_{\ell}} \Big(
	\sin(\theta_{\ell}(t)-\theta_{k}(t)) + \varphi_{\ell k}) + \sin(\theta_{k}(t)-\theta_{\ell}(t)) 
	\Big)
	\,.
\end{align*}
Since both sinusoidal terms in the first sum are strictly positive, they can be lower-bounded as
\begin{align*} 
	\frac{P_{m k}}{D_{m}} \sin(\theta_{m}(t) - \theta_{k}(t)) 
	+ 
	\frac{P_{\ell k}}{D_{\ell}} \sin(\theta_{k}(t)-\theta_{\ell}(t)) 
	\geq
	\frac{\subscr{P}{min}}{\subscr{D}{max}} \Big( 
	\sin(\theta_{m}(t) - \theta_{k}(t)) 
	+ 
	\sin(\theta_{k}(t)-\theta_{\ell}(t)) 
	\Big)
	\,.
\end{align*}
In the following we apply classic trigonometric arguments from the
Kuramoto literature \cite{NC-MWS:08,FDS-DA:07,GSS-UM-FA:09}.  The identity
$\sin(x) + \sin(y) = 2 \sin(\frac{x+y}{2}) \cos(\frac{x-y}{2})$ leads to
the further simplifications
\begin{align*}
	\sin(\theta_{m}(t) - \theta_{k}(t)) + \sin(\theta_{k}(t)-\theta_{\ell}(t))
	&=
	2\, \sin\!\left(\frac{\theta_{m}(t) - \theta_{\ell}(t)}{2}\right) 
	\cos\!\left(\frac{\theta_{m}(t) + \theta_{\ell}(t)}{2} - \theta_{k}(t) \right), 
	\\
	\sin(\theta_{m}(t) - \theta_{k}(t) + \varphi_{mk}) + \sin(\theta_{k}(t)-\theta_{m}(t))
	&=
	2\, \sin\!\left(\frac{\varphi_{mk}}{2}\right) 
	\cos\!\left(\theta_{m}(t) - \theta_{k}(t) + \frac{\varphi_{mk}}{2}\right), 
	\\
	\sin(\theta_{\ell}(t) - \theta_{k}(t) + \varphi_{\ell k}) + \sin(\theta_{k}(t)-\theta_{\ell}(t))
	&=
	2\, \sin\!\left(\frac{\varphi_{\ell k}}{2}\right) 
	\cos\!\left(\theta_{\ell}(t) - \theta_{k}(t) + \frac{\varphi_{\ell k}}{2}\right) 
	.
\end{align*}
Note that the right-hand side of \eqref{eq: D+ - Appendix - 3} is a convex function of $\theta_{k} \in [\theta_{\ell},\theta_{m}]$ (see Proof of Theorem  \ref{Theorem: Sync Condition I - Appendix - 1}) and accordingly achieves its maximum at the boundary for $\theta_{k} \in \{\theta_{\ell},\theta_{m}\}$
Therefore, $D^{+}V(\theta(t))$ is upper bounded by
\begin{multline*}
  D^{+} V (\theta(t)) \leq\, \max_{\{i,j\}} \Bigl| \frac{\omega_{i}}{D_{i}}
  - \frac{\omega_{j}}{D_{j}} \Bigr| -
  \frac{\subscr{P}{min}}{\subscr{D}{max}} \sum\nolimits_{k=1}^{n} 2 \sin
  \Bigl( \frac{\gamma}{2} \Bigr) \cos
  \Bigl( \frac{\gamma}{2} \Bigr)
  \\
  - \sum\nolimits_{k=1}^{n} \frac{P_{m k}}{D_{m}} 2\,
  \sin\Bigl(\frac{\varphi_{mk}}{2}\Bigr) \cos\Bigl(\gamma + \frac{\varphi_{mk}}{2}\Bigr) 
  + 
  \sum\nolimits_{k=1}^{n}
  \frac{P_{\ell k}}{D_{\ell}} 2\, \sin\Bigl(\frac{\varphi_{\ell
      k}}{2}\Bigr) \cos\Bigl(\frac{\varphi_{\ell k}}{2}\Bigr) .
\end{multline*}
Note that the second sum is strictly negative for $\gamma \in {[0,\pi/2-\subscr{\varphi}{max}[}$ and can be neglected. Moreover, in the third sum the maximum over all nodes $\ell$ can be taken. Reversing the trigonometric identity from above as $2 \sin(x) \cos(y) = \sin(x-y) + \sin(x+y)$ yields then the simple expression
\begin{align*}
	D^{+} V (\theta(t))
	\leq&\,
	\max_{\{i,j\}} \Bigl| \frac{\omega_{i}}{D_{i}}  - \frac{\omega_{j}}{D_{j}}  \Bigr| - 
	\frac{\subscr{P}{min}}{\subscr{D}{max}} \sum\nolimits_{k=1}^{n}  
	\sin(\gamma ) +
	\max_{i} \sum\nolimits_{k=1}^{n} \frac{P_{i k}}{D_{i}}
	\sin(\varphi_{i k})
	\,.
\end{align*}
It follows that the length of the arc formed by the angles is
non-increasing in $\Delta(\gamma)$ if
\begin{equation}
	\subscr{P}{min}\sin(\gamma)
	\geq
	\subscr{P}{critical} \cos(\subscr{\varphi}{max})
	\label{eq: critical bound - Appendix - 3}
	\,,
\end{equation}
where $\subscr{P}{critical}$ is as stated in equation \eqref{eq: key-assumption - Kuramoto - Appendix - 3}. The left-hand side of \eqref{eq: critical bound - Appendix - 3} is a strictly increasing function of $\gamma \in {[0,\pi/2-\subscr{\varphi}{max}[}$. Therefore, there exists some $\gamma^* \in {[0,\pi/2-\subscr{\varphi}{max}[}$ satisfying equation \eqref{eq: critical bound - Appendix - 3} if and only if equation \eqref{eq: critical bound - Appendix - 3} at $\gamma=\pi/2-\subscr{\varphi}{max}$ is true with the strict inequality sign, which corresponds to equation \eqref{eq: key-assumption - Kuramoto - Appendix - 3}. Additionally, if these two equivalent  statements are true, then there exists a unique $\subscr{\gamma}{min}\in {[0,\pi/2-\subscr{\varphi}{max}[}$ that satisfies equation \eqref{eq: critical bound - Appendix - 3} with the equality sign, namely $\subscr{\gamma}{min} = \arcsin(\cos(\subscr{\varphi}{max}){\subscr{P}{critical}}/{\subscr{P}{min}})$\dots
\hspace*{\fill}~\QED\par\endtrivlist\unskip

\end{document}